\newtheorem{cor}{Corollary}
\newtheorem{lem}[cor]{Lemma}
\newtheorem{prop}[cor]{Proposition}
\newtheorem{thm}[cor]{Theorem}
\newtheorem{assumption}[cor]{Assumption}
\newtheorem{remark}[cor]{Remark}
\newtheorem{definition}[cor]{Definition}
\numberwithin{cor}{section}
\DeclareMathOperator{\C}{C}
\renewcommand{\d}{\delta}
\renewcommand{\P}{\mathbb{P}}
\renewcommand{\b}{\beta}
\renewcommand{\O}{\Omega}
\newcommand{\sgn}{\text{sgn}}
\newcommand{\F}{\mathcal F}
\newcommand{\R}{\mathbb{R}}
\newcommand{\N}{\mathbb{N}}
\newcommand{\E}{\mathbb{E}}
\newcommand{\norm}[1]{\| #1 \|}
\newcommand{\ve}{\varepsilon}
\newcommand{\abs}[1]{|#1|}
\providecommand{\ud}[1]{\, \mathrm{d} #1}
\providecommand{\dx}{\ud{x}}
\providecommand{\dy}{\ud{y}}
\providecommand{\dxi}{\ud \xi}
\providecommand{\deta}{\ud{\eta}}
\providecommand{\ds}{\ud{s}}
\providecommand{\dt}{\ud{t}}
\providecommand{\dd}{\ud}
\author{Benjamin Fehrman}
\address{Louisiana State University, Baton Rouge 70802, Louisiana, USA}
\email{fehrman@math.lsu.edu}
\subjclass[2010]{35Q84, 60F10, 60H15, 60K35, 82B21}
\keywords{conservative SPDE, Dean--Kawasaki equation, Neumann problem, kinetic formulation}
\date{\today}
\begin{document}

\title{Stochastic PDEs with correlated, non-stationary Stratonovich noise of Dean--Kawasaki type}

\begin{abstract}
The results of the author and Gess \cite{FG21} develop a robust well-posedness theory for a broad class of conservative stochastic PDEs, with both probabilistically stationary and non-stationary Stratonovich noise, and with irregular noise coefficients like the square root.  However, one case left untreated by \cite{FG21} is the case of SPDEs that combine conservative, non-stationary Stratonovich noise with square root-like nonlinearities.  Such equations arise naturally in the fluctuating hydrodynamics of inhomogenous systems, and a new analysis is required to handle certain discontinuous coefficients appearing in their It\^o formulations.  We treat the discontinuities by showing that the equation exhibits a novel regularization of the logarithm of the solution, and establish the well-posedness by building on the concept of a stochastic kinetic solution introduced in \cite{FG21}.
\end{abstract}

\maketitle

\markright{}

\section{Introduction}

The purpose of this paper is to establish the well-posedness of nonnegative solutions to certain conservative stochastic PDEs with non-stationary noise, which we illustrate through the inhomogenous Dean--Kawasaki equation with correlated Stratonovich noise
\begin{equation}\label{i_eq} \partial_t\rho = \nabla\cdot a(x)\nabla\phi(\rho)-\nabla\cdot(\sqrt{\rho}\circ s(x)\dd\xi)\;\;\textrm{in}\;\;U\times(0,\infty), \end{equation}
with no-flux boundary conditions on a smooth bounded domain $U\subseteq\R^d$.  In the equation, $\circ$ denotes Stratonovich integration, $a$ is a spatially inhomogenous and uniformly elliptic diffusion matrix that satisfies $a=ss^t$, $\phi$ is nondegenerate, and $\xi$ is a spatially correlated, probabilistically stationary noise.  The main result of the paper is the existence, uniqueness, and pathwise stability of solutions.

\begin{thm}[cf.\ Theorem~\ref{thm_unique}, Theorem~\ref{thm_rks_ex}]  Under Assumptions~\ref{assume_d} and \ref{assume_n}, for every $\rho_0\in L^1(\O;L^1(U))$ there exists a unique stochastic kinetic solution of \eqref{i_eq} in the sense of Definition~\ref{sol_def}.  Furthermore, any two solutions $\rho_1$ and $\rho_2$ satisfy
\[\max_{t\in[0,T]}\norm{\rho_1(\cdot,t)-\rho_2(\cdot,t)}_{L^1(U)}\leq \norm{\rho_{0,1}-\rho_{0,2}}_{L^1(U)},\]
and the solutions satisfy the estimates of Propositions~\ref{prop_e}, \ref{prop_h1}, \ref{prop_kolm}, and \ref{prop_kcc}.  \end{thm}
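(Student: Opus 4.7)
The plan is to extend the stochastic kinetic framework of \cite{FG21} by incorporating a novel regularization of $\log\rho$ tailored to the discontinuities produced when \eqref{i_eq} is rewritten in It\^o form. Expanding the Stratonovich integral,
\[
\p_t\rho = \nabla\cdot a(x)\nabla\phi(\rho) - \nabla\cdot(\sqrt{\rho}\, s(x)\dd\xi) + \text{(It\^o correction)},
\]
the correction produces a drift proportional to $\nabla s(x)/\sqrt{\rho}$ which is singular on the set $\{\rho=0\}$. Controlling this term is the central analytic issue and is what distinguishes the present problem from \cite{FG21}; the expected resolution is that solutions should have enough integrability of $\nabla\rho/\sqrt{\rho}$ to absorb exactly this singularity.

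First I would introduce a double regularization: replace $\sqrt{\cdot}$ by a smooth bounded Lipschitz surrogate $\s_\k$ agreeing with $\sqrt{\cdot}$ on $\{\rho\ge\k\}$, and add a viscosity $\ve\D\rho$. The resulting It\^o SPDE admits classical nonnegative solutions $\rho^{\ve,\k}$ by standard monotone-operator theory. The crucial next step is to derive uniform-in-$(\ve,\k)$ estimates. Applying It\^o's formula to $(\rho^{\ve,\k}+\d)\log(\rho^{\ve,\k}+\d)-(\rho^{\ve,\k}+\d)$ and sending $\d\to 0$ should, via a sign-correct cancellation between the martingale quadratic variation and the Stratonovich-to-It\^o correction, yield a bound
\[
\E\int_0^T\int_U \frac{\abs{\sqrt{a(x)}\nabla\rho^{\ve,\k}}^2}{\rho^{\ve,\k}}\dx\dt \le C,
\]
which is the announced regularization of $\log\rho$ and exactly compensates the $1/\sqrt{\rho}$ singularity in the It\^o drift. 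Together with the energy estimate (Proposition~\ref{prop_e}), the $H^1$ control of $\phi(\rho)$ (Proposition~\ref{prop_h1}), and a Kolmogorov-type time regularity (Propositions~\ref{prop_kolm} and \ref{prop_kcc}), this produces enough compactness to proceed.

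Existence then follows by a Skorokhod--Jakubowski argument: extract a subsequential limit $\rho$ together with the parabolic defect measure arising in the vanishing viscosity limit, and verify that the resulting pair is a stochastic kinetic solution in the sense of Definition~\ref{sol_def}. The no-flux condition is preserved in the limit via Assumption~\ref{assume_n}, which I would expect to ensure that $s(x)$ is tangential to $\p U$ so that $\sqrt{\rho}\, s(x)\cdot\nu = 0$ in the appropriate trace sense, and that $a(x)\nabla\phi(\rho)\cdot\nu$ vanishes on the boundary.

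Uniqueness and the $L^1$-contraction are the main obstacle. The strategy is the kinetic doubling of variables of \cite{FG21}: compare the kinetic functions of $\rho_1$ and $\rho_2$ against a test function supported near $\{x=y,\,\xi=\eta\}$ and pass to the diagonal. The It\^o cross-product of the two noises generates a term involving $\sqrt{\rho_1(x)\rho_2(y)}\, s(x)\otimes s(y)$, whose derivatives in $x$ and $y$ again carry $1/\sqrt{\rho_i}$ singularities on the respective zero sets; these must be absorbed by the log-regularization of each $\rho_i$ established in the second step, applied at the kinetic level. The delicate part is to keep the several mollification errors under simultaneous control and to show that the boundary contributions from $\p U\times\p U$ vanish in the diagonal limit. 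Once the singular cross-terms are neutralized, the standard kinetic machinery delivers $\max_{t\in[0,T]}\norm{\rho_1(\cdot,t)-\rho_2(\cdot,t)}_{L^1(U)}\le \norm{\rho_{0,1}-\rho_{0,2}}_{L^1(U)}$, and uniqueness is an immediate consequence.
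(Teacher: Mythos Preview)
Your outline misidentifies the central obstruction and therefore omits the idea the paper actually needs. The It\^o form of \eqref{i_eq} is
\[
\partial_t\rho = \nabla\cdot a\nabla\phi(\rho)-\nabla\cdot(\sqrt{\rho}\,s\,\dd\xi)+\tfrac{\langle\xi\rangle_1}{8}\nabla\cdot\big(a\nabla\log\rho\big)+\tfrac{\langle\xi\rangle_1}{4}\nabla\cdot\big(\mathbf{1}_{\{\rho>0\}}\,s(\nabla\cdot s^t)\big),
\]
so the problematic correction is not a $1/\sqrt{\rho}$ drift that the entropy bound $\E\int\int\rho^{-1}|\sqrt{a}\nabla\rho|^2<\infty$ can absorb; it is a genuinely \emph{discontinuous} coefficient $\mathbf{1}_{\{\rho>0\}}$. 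In the uniqueness argument this coefficient reappears when you remove the small-velocity cutoff: after integrating by parts the term $s(\nabla\cdot s^t)\cdot\nabla\varphi_\beta(\rho_i)$ one is left with boundary and interior integrals of $|\varphi_\beta(\rho_1)-\varphi_\beta(\rho_2)|\to|\mathbf{1}_{\{\rho_1>0\}}-\mathbf{1}_{\{\rho_2>0\}}|$, and no amount of gradient control makes this vanish. The paper's resolution is a \emph{non-vanishing dichotomy}: it proves (via an $H^{-1}$ energy identity, not just the entropy estimate) that $\langle\rho_0\rangle_U\log(\rho\wedge1)\in L^1(U\times[0,T])$ and that $t\mapsto\int_0^t(\phi(\rho)+\tfrac{\langle\xi\rangle_1}{8}\log\rho)$ is H\"older into $H^1(U)$; together these force $\{\rho=0\}$ to have zero measure in $U\times[0,T]$ and on $\partial U\times[0,T]$ whenever $\rho_0\neq0$, so $\mathbf{1}_{\{\rho_i>0\}}\equiv1$ a.e.\ and the discontinuity disappears. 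This is the ``novel regularization of the logarithm'' advertised, and it is absent from your plan.

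Two further points where your sketch would not close. First, Assumption~\ref{assume_n} does \emph{not} make $s$ tangential to $\partial U$; nothing in the hypotheses gives $s\nu=0$, and the boundary contributions are handled instead by the trace version of the non-vanishing property just described. Second, you propose doubling the \emph{spatial} variables for uniqueness, but the paper works on a bounded domain with Neumann data and avoids spatial doubling entirely: it regularizes only in the velocity variable (convolving $\chi_i$ with $\kappa^\delta(\eta)$) and uses cutoffs $\varphi_\beta(\eta)\zeta_M(\eta)$ in velocity, so no boundary layer in $x$ ever appears. Your cross-variation computation at $(x,y)$ would produce boundary terms on $\partial U\times U$ and $U\times\partial U$ that you have no mechanism to kill, and the ``diagonal limit'' you invoke is exactly where Neumann problems resist the Kruzhkov scheme. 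On the existence side your program is closer to the paper's (smooth $\sigma$, a priori estimates, Skorokhod--Jakubowski), though the extra viscosity $\varepsilon\Delta\rho$ is unnecessary and the estimates you list do not include the $H^{-1}$/log-integrability and time-averaged $H^1$ bounds (Propositions~\ref{prop_h1} and~\ref{prop_kcc}) that drive both compactness and the non-vanishing property.
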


These techniques, with those of the author and Gess \cite{FG21}, can be adapted in a straightforward way to treat a general class of equations of the type
\begin{equation}\label{i_eq_1}\partial_t\rho = \nabla\cdot a\nabla \phi(\rho)-\nabla\cdot(\nu(\rho)+\sigma(\rho)\circ s\dxi)+\lambda(\rho)\dd\xi'+\eta(\rho)\;\;\textrm{in}\;\;U\times(0,\infty),\end{equation}
with Neumann boundary conditions and non-stationary noises $\xi$ and $\xi'$.  In particular, the existence and uniqueness does not depend on the uniform ellipticity of \eqref{i_eq}, and can be made to handle degenerate diffusions like $\phi(\rho)=\rho^m$, for $m\in(0,\infty)$, that include all porous media and fast diffusion type equations.  See Section~\ref{sec_compare}, Remark~\ref{remark_unique_1}, and Remark~\ref{remark_exist_11} for an explanation of these extensions, and for a detailed comparison to \cite{FG21}.

The reason we choose to focus on \eqref{i_eq} is that it best illustrates some difficulties left untreated by the results of \cite{FG21}, which are due specifically to the combination of an inhomogenous diffusion matrix $s$ and the square root noise coefficient, which create the discontinuous coefficient in \eqref{i_2}, and due to the Neumann boundary condition. Furthermore, following Dean \cite{D96} and Kawasaki \cite{K94}, equation \eqref{i_eq} is the natural Dean--Kawasaki equation with correlated, Stratonovich noise associated to the empirical densities
\begin{equation}\label{i_02}m_n(x,t) = n^{-1}\textstyle\sum_{i=1}^n\delta_0(x-X^i_t),\end{equation}
for independent realizations $X^i_t$ of the Markov process with generator $\nabla\cdot a\nabla$ and reflecting boundary conditions.  The challenges in treating \eqref{i_eq} become more apparent after writing the equation in the It\^o form
\begin{equation}\label{i_2}\partial_t\rho = \nabla\cdot a\nabla\phi(\rho)-\nabla\cdot (\sqrt{\rho}s\dd\xi)+\frac{\langle \xi\rangle_1}{8}\nabla\cdot\big(a\nabla\log(\rho)\big)+\frac{\langle \xi\rangle_1}{4}\nabla\cdot\big(\mathbf{1}_{\{\rho>0\}}s(\nabla\cdot s^t)\big),\end{equation}
for the spatially constant quadratic variation $\langle\xi\rangle_1$ of the noise, for the indicator function $\mathbf{1}_{\{\rho>0\}}$ of the set where $\rho$ is positive, and for the vector-valued matrix divergence $(\nabla\cdot s^t)_j  = \partial_k s_{kj}$.  We now remark further on the appearance of the discontinuous coefficient $\mathbf{1}_{\{\rho>0\}}$, which is perhaps somewhat surprising.

A less careful derivation based solely on \eqref{i_eq} might suggest to study the equation
\begin{equation}\label{i_3}\partial_t\tilde{\rho} = \nabla\cdot a\nabla\phi(\tilde{\rho})-\nabla\cdot (\sqrt{\tilde{\rho}}s\dd\xi)+\frac{\langle \xi\rangle_1}{8}\nabla\cdot\big(a\nabla\log(\tilde{\rho})\big)+\frac{\langle \xi\rangle_1}{4}\nabla\cdot\big(s(\nabla\cdot s^t)\big),\end{equation}
which, however, exhibits several deficiencies.  The first is that \eqref{i_3} evidently does not preserve the nonnegativity of the initial data, which is an important qualitative property for the solutions if they are to accurately describe the density fluctuations \eqref{i_02}, or if they are to have any application to the non-equilibrium statistical mechanics theories of fluctuating hydrodynamics (see, for example, Spohn \cite{Spo2012}) or macroscopic fluctuation theory (see, for example, Bertini, De Sole, Gabrielli, Jona-Lasinio, and Landim \cite{BDSGJLL15}).

A second difficulty is that \eqref{i_3} is not stable with respect to regularizations of the square root.  Namely, for a smooth approximation $\sigma$ of the square root satisfying $\sigma(0)=0$, the It\^o form of the regularized equation $\partial_t\rho = \nabla\cdot a\nabla\phi(\rho)-\nabla\cdot(\sigma(\rho)\circ s\dd\xi)$ is the equation
\begin{equation}\label{i_4} \partial_t\rho   = \nabla\cdot a\nabla\phi(\rho)-\nabla\cdot(\sigma(\rho)s\dd\xi)+\frac{\langle \xi\rangle_1}{2}\nabla\cdot\big(\sigma'(\rho)^2a\nabla\rho\big) +\frac{\langle \xi\rangle_1}{2}\nabla\cdot\big(\sigma(\rho)\sigma'(\rho)s(\nabla\cdot s^t)\big). \end{equation}
We study smooth approximations to \eqref{i_eq} of the type \eqref{i_4} in Section~\ref{sec_rkss_exist}, where we obtain stable estimates for the solutions.  In particular, if $\rho_0$ is nonnegative then the solution $\rho$ is nonnegative, and the estimates of Propositions~\ref{prop_e}, \ref{prop_kolm}, and \ref{prop_kcc} allow to construct a solution in Theorem~\ref{thm_rks_ex} by passing to the limit $\sigma(\rho)\rightarrow\sqrt{\rho}$.  With respect to this limit, we have that
\[\sigma(\rho)\sigma'(\rho)\rightarrow \frac{1}{2}\mathbf{1}_{\{\rho>0\}}\;\;\textrm{and}\;\;\sigma'(\rho)^2\nabla\rho \rightarrow\frac{1}{4\rho}\nabla\rho = \frac{1}{4}\nabla\log(\rho),\]
which recovers \eqref{i_2}.  The solutions constructed in this way will necessarily be nonnegative, and will therefore not solve equation \eqref{i_4}, in general, if $s(\nabla\cdot s^t)\neq 0$.

This illustrates a final new difficulty arising due to the discontinuous coefficient $\mathbf{1}_{\{\rho>0\}}$.  It is not only necessary to deal with the discontinuity of $\mathbf{1}_{\{\rho>0\}}$ but, in the proof of uniqueness, it is necessary to distinguish the class of nonnegative solutions to \eqref{i_2} from the potentially signed solutions of \eqref{i_3}.  We do this by showing that the nonnegative solutions of \eqref{i_eq} constructed above almost surely exhibit a dichotomy.  On the event $\rho_0=0$, the solution is zero.  While, on the event $\rho_0\neq 0$, the set $\{\rho=0\}$ has measure zero in the domain and on the boundary in the sense of trace.  We explain these details in Sections~\ref{sec_transport} and \ref{sec_parabolic_time}, where we show that equation \eqref{i_eq} exhibits a surprising regularization of the logarithm $\log(\rho)$ and its integrated time averages.  In Section~\ref{sec_compare} we compare the methods of this paper to those of \cite{FG21}, and in Section~\ref{sec_litlit} we give a complete overview of the literature.

\subsection{Stochastic transport and parabolicity}\label{sec_transport}  Equation \eqref{i_2} can be viewed as a correction to the It\^o equation
\begin{equation}\label{tr_0} \partial_t\rho  = \Delta \rho-\nabla\cdot (\sqrt{\rho}\dd\xi)=\Delta\rho - \frac{1}{2\sqrt{\rho}}\nabla\rho\cdot\dd\xi-\sqrt{\rho}(\nabla\cdot \dd\xi),\end{equation}
where for simplicity here we have taken $a=s=I_{d\times d}$ and $\phi(\rho)=\rho$.  To understand the effect of the stochastic transport term appearing second on the righthand side, consider the much simplified equation, for $a,b\in\R$ and $B_t$ a standard $d$-dimensional Brownian motion,
\begin{equation}\label{tr_1} \partial_t v = a\Delta v+\nabla v\cdot b\dd B_t.\end{equation}
An application of It\^o's formula proves formally that, for $\tilde{v}$ satisfying the potentially backward heat equation $\partial_t\tilde{v} = (a-\nicefrac{b^2}{2})\Delta\tilde{v}$, we have that $v(x,t) = \tilde{v}(x+bB_t,t)$.  That is, the stochastic transport in \eqref{tr_1} effects the parabolic structure of the equation, and equation \eqref{tr_1} is uniformly parabolic if and only if $\nicefrac{b^2}{2}<a$.  If we therefore think formally in terms of parabolicity that
\[\partial_t\rho  = a\Delta v+\nabla v\cdot b\dd B_t = \textrm{``}\big(a-\frac{b^2}{2})\Delta\rho,\textrm{''}\]
then a formal application of the same reasoning to \eqref{tr_0} yields that, in terms of parabolicity,
\[\partial_t\rho  = \Delta \rho-\nabla\cdot (\sqrt{\rho}\dd\xi) = \textrm{``}\Delta\big(\rho-\frac{\langle\xi\rangle_1}{8}\log(\rho)\big)\textrm{''}-\sqrt{\rho}(\nabla\cdot \dd\xi),\]
and therefore that there exists no regime of smallness for $\langle\xi\rangle_1$ that guarantees equation \eqref{tr_0} is uniformly parabolic.  Indeed, equation \eqref{tr_0} behaves like a backward heat equation in the region where the solution $\rho<\nicefrac{\langle \xi\rangle_1}{8}$.  It is for this reason that we consider the corrected equation, for $\theta\in[0,\infty)$,
\begin{equation}\label{tr_2} \partial_t\rho  = \Delta\rho-\nabla\cdot(\sqrt{\rho}\dd\xi)+\frac{\theta\langle\xi\rangle_1}{4}\Delta\log(\rho),\end{equation}
where $\theta=0$ corresponds to the It\^o equation, $\theta=\nicefrac{1}{2}$ corresponds to the Stratonovich formulation of \eqref{tr_0}, and $\theta=1$ corresponds to the Kilmontovich formulation of \eqref{tr_0}.  The above reasoning shows that \eqref{tr_2} is uniformly parabolic if and only if $\theta\in[\nicefrac{1}{2},\infty)$, and it is for this reason that we consider the Stratonovich case $\theta=\nicefrac{1}{2}$.  The entirety of the analysis in this paper would remain valid in the case $\theta\in(\nicefrac{1}{2},\infty)$.  In fact, the regime $\theta\in(\nicefrac{1}{2},\infty)$ is fundamentally different, since then the correction term more than compensates the loss of parabolicity due to the stochastic transport and adds regularity.  However, in the case $\theta\in[0,\nicefrac{1}{2})$ some results, such as the pathwise $L^1(U)$-contraction of Theorem~\ref{thm_unique}, are simply false due to the fact that, in regions where the solutions are small, the respective $L^1(U)$-norms will expand like solutions of a backward heat equation.

\subsection{Persistence of time-averaged parabolicity}\label{sec_parabolic_time} We now explain that, contrary to Section~\ref{sec_transport} above, the parabolicity added by the Stratonovich-to-It\^o correction does persist in a time-averaged sense.  We first prove in Proposition~\ref{prop_h1} that, on an approximate level, for $\langle\rho_0\rangle_U = \fint_U\rho_0$,
\begin{align} \label{int_0}
&\E\big[\max_{t\in[0,T]}\norm{(\rho-\langle\rho_0\rangle_{U})}^2_{H^{-1}(U)}\big]+\langle\xi\rangle_1\E\big[\langle\rho_0\rangle_{U}\medint\int_0^T\medint\int_{U}\abs{\log(\rho\wedge 1)}\big]
\\ \nonumber & \leq c\big(\norm{(\rho_0-\langle\rho_0\rangle_{U})}^2_{H^{-1}(U)}+\langle\xi\rangle_1^2\big(1+\E\big[\langle\rho_0\rangle^2_{U}\big]\big)\big).
\end{align}
The remarkable aspect of this estimate is that it implies---using only the $L^1$-boundedness and nonvanishing of $\rho_0$---that the logarithm $\log(\rho)$ is $\P$-a.s.\ space-time integrable on the event that $\rho_0\neq 0$, a property that fails for the heat equation.  Indeed, if $\overline{\rho}$ solves the heat equation  in $B_2\times(0,T]$ with initial data $\rho_0 = \mathbf{1}_{B_{\nicefrac{1}{2}}}$, then for all $t\in(0,1]$ and $\abs{x}\geq 1$, for some $c\in(0,\infty)$,
\[\overline{\rho}(x,t)\leq c\abs{B_{\nicefrac{1}{2}}}(2\pi t)^{-\frac{d}{2}}\exp\big(-\frac{\abs{x}^2}{8t}\big)\;\;\textrm{and}\;\;\log(\overline{\rho}(x,t))\leq -\frac{\abs{x}^2}{8t}+\log( c\abs{B_{\nicefrac{1}{2}}})-\frac{d}{2}\log(2\pi t).\]
Due to the divergence of the leading term as $t\rightarrow 0$, the composition $\log(\overline{\rho})$ is not space time integrable despite $\langle \rho_0\rangle_{B_2}=\abs{B_{\nicefrac{1}{2}}}/\abs{B_2}>0$.  The validity of estimate \eqref{int_0} is due to the structure of the equation, and to the fact that the singularity of the logarithm at $\rho\simeq 0$ is negative.

We will now show that it is also possible to obtain positive regularity of $\log(\rho)$ in a time-averaged sense.  First, in Proposition~\ref{prop_kolm}, we use estimate \eqref{int_0} to establish regularity of the solution in time, seen as a map into $H^{-1}(U)$:  for all $\beta\in(0,\nicefrac{1}{2})$ sufficiently small depending on the integrability of the initial data,
\begin{equation}\label{int_2}\E\big[\norm{\rho}_{\C^{0,\beta}([0,T];H^{-1}(U))}]<\infty.\end{equation}
We then prove in Proposition~\ref{prop_kcc} that, for $\rho_{r,t}=\rho(\cdot,t)-\rho(\cdot,r)$ for every $r\leq t\in[0,T]$,
\[\E\big[\norm{\nabla \medint\int_r^t\big(\phi(\rho)+\frac{\langle\xi\rangle_1}{8}\log(\rho)\big)}^{2p}_{L^2(U)}\big]\leq c\big(\E\big[\norm{\rho_{r,t}}^{2p}_{H^{-1}(U)}\big]+\langle\xi\rangle_1^{p}(t-r)^\frac{p}{2}\E\big[\langle\rho_0\rangle^p_U\big]\big),\]
which implies using \eqref{int_2} that the map $L(p)\colon[0,T]\rightarrow H^1(U)$ defined by
\begin{equation}\label{int_3}t\in[0,T]\rightarrow L(\rho)_t=\medint\int_0^t\big(\phi(\rho)+\frac{\langle\xi\rangle_1}{8}\log(\rho)-\langle \phi(\rho)+\frac{\langle\xi\rangle_1}{8}\log(\rho)\rangle_U\big)\in H^1(U),\end{equation}
satisfies $\E\big[\norm{L(\rho)}_{\C^{0,\beta}([0,T];H^1(U))}\big]<\infty$, for every $\beta\in(0,\nicefrac{1}{2})$ sufficiently small depending on the integrability of the initial data, where the righthand side is understood to be zero on the event that the initial data $\rho_0=0$.

The collection of estimates \eqref{int_0}, \eqref{int_2}, and \eqref{int_3} are new even in the settings covered by the results of \cite{FG21}, and they are simply false for the heat equation, as explained above.  They demonstrate that some regularizing properties persist from the Stratonovich-to-It\^o correction, both in terms of integrability and in terms of time-averaged regularity.  The integrability of $\log(\rho)$ and the positive regularity of the map $L(\rho)$ prove $\P$-a.s.\ that the set $\{\rho=0\}$ has zero measure in $U\times[0,T]$, and has zero measure in the sense of a trace on $\partial U\times[0,T]$ interpreted in the sense of Definition~\ref{sol_def} and Remark~\ref{remark_sol}.  There is therefore a dichotomy in the solution:  $\rho=0$ almost everywhere on the event $\rho_0=0$, and on the event $\rho_0\neq 0$ we have that $\rho$ is $\P$-a.s.\ almost everywhere positive.  This means almost surely that the indicator $\mathbf{1}_{\{\rho>0\}}=0$ or $\mathbf{1}_{\{\rho>0\}}=1$ is actually continuous, and can therefore be handled as is done in step \eqref{u_32} of the uniqueness proof of Theorem~\ref{thm_unique}.

\subsection{Comparison to \cite{FG21}}\label{sec_compare}  We recall equation \eqref{i_4}, where the It\^o form of a general Stratonovich equation $\partial_t\rho = \nabla\cdot a\nabla\phi(\rho)-\nabla\cdot(\sigma(\rho)\circ s\dd\xi)$ is the equation
\[\partial_t\rho   = \nabla\cdot a\nabla\phi(\rho)-\nabla\cdot(\sigma(\rho)s\dd\xi)+\frac{\langle \xi\rangle_1}{2}\nabla\cdot\big(\sigma'(\rho)^2a\nabla\rho\big) +\frac{\langle \xi\rangle_1}{2}\nabla\cdot\big(\sigma(\rho)\sigma'(\rho)s(\nabla\cdot s^t)\big).\]
As remarked in the introduction, even if $\xi$ is a probabilistically stationary noise, the presence of an inhomogenous diffusion coefficient $s$ creates a non-stationary noise $s\xi$.  The results of \cite[Theorem~4.6, Theorem~5.25]{FG21} do apply to non-stationary noise under the additional assumption \cite[Assumption~4.1]{FG21}, which requires that the product $\sigma\sigma'$ extends to a continuous function on $[0,\infty)$ satisfying $\sigma(0)\sigma'(0)=0$.  This is not the case for $\sigma(\eta)=\sqrt{\eta}$, as explained above, for which $\sigma\sigma'$ extends continuously to $[0,\infty)$ but fails to vanish at $\eta=0$.  In this way, the results of \cite{FG21} apply to the equation $\partial_t\rho = \nabla\cdot a\nabla\rho^m-\nabla\cdot(\rho^{\nicefrac{m}{2}}\circ s\dd\xi)$, for every $m\in(1,\infty)$, but not for $m=1$.  One primary purpose of this paper is therefore to develop a robust well-posedness theory for equations like \eqref{i_eq} with non-stationary noise which, as demonstrated by \eqref{i_02}, arise naturally to describe density fluctuations in spatially inhomogenous systems.

Furthermore, the proof of uniqueness must also account for the discontinuous coefficient $\mathbf{1}_{\{\rho>0\}}$ in the domain and on the boundary, which is done by demonstrating the dichotomy explained in the introduction.  Namely, for nonnegative initial data, the solutions almost surely vanish on the event $\rho_0=0$, and they are almost surely nowhere vanishing on the event $\rho_0\neq 0$.  Finally, even in the case of probabilistically stationary noise, the proof of existence in \cite{FG21} was substantially complicated by the fact that the logarithm of the solution was not known to be integrable.  This substantially complicated the regularity estimates in time, which required to localize the solution away from regions where it approached zero, as done in \cite[Proposition~5.11]{FG21}, and required the introduction of a new metric for the strong $L^1$-topology in \cite[Definition~5.19, Lemma~5.20]{FG21} to establish the tightness in law of the approximating solutions.  The estimates of Propositions~\ref{prop_h1}, \ref{prop_kolm}, and \ref{prop_kcc} allow to avoid these complications by establishing the integrability of the logarithm of the solution and its time averaged regularity, as explained in Section~\ref{sec_parabolic_time} above, and to establish the initial time continuity \eqref{exis_134} and to construct a solution in Theorem~\ref{thm_rks_ex}.

\subsection{Comments on the literature}\label{sec_litlit}  Conservative SPDEs modeled on the Dean--Kawasaki equation \eqref{i_eq} with correlated noise have recently received a lot of attention in the literature.  The results of this paper are most closely related to those of the author and Gess \cite{FG21}, which considered general equations like \eqref{i_eq_1} on the $d$-dimensional torus.  These results were extended to the whole space by the author and Gess \cite{FehGes24}, to equations with interaction kernels by Wang, Wu, and Zhang \cite{DKInt}, and to equations on smooth, bounded domains with Dirichlet boundary conditions by Popat \cite{Pop24}.  Another purpose of this work is to extend \cite{FG21} to the Neumann case.  Other works on SPDEs with conservative noise include, among others, Lions, Perthame, and Souganidis \cite{LPS13-2,LPS13,LPS14}, Dirr, the author, and Gess \cite{DirFehGes19}, Friz and Gess \cite{FG16}, Gess and Souganidis \cite{GS14,GS17,GS16-2}, the author and Gess \cite{FG17}, Dareiotis and Gess \cite{DG18}, and Clini \cite{Cli2023}.

Several of these works, including \cite{Cli2023,FG17,FG21,Pop24} as well as the earlier works of Debussche and Vovelle \cite{DebVov2010}, Hofmanov\'a \cite{Hof2013}, Debussche, Hofmanov\'a, and Vovelle \cite{DebHofVov2016}, and the author and Gess \cite{FehGes20211}, are based on the equation's kinetic form.  The kinetic formulation was introduced in the context of deterministic conservation laws by Lions, Perthame, and Tadmor \cite{LPT94} and Perthame \cite{Per1998}.  See also the works of Bendahmane and Karlsen \cite{BenKar2004}, of Chen and Perthame \cite{CP03}, of De Lellis, Otto, and Westdickenberg \cite{DeLOttWes2003}, and of Karlsen and Riseboro \cite{KarRis2003}.

The Dean--Kawasaki equation driven by space-time white noise was introduced by Dean \cite{D96} and Kawasaki \cite{K94} as a model for the density fluctuations of certain interacting diffusive systems.  For some recent progress, we refer to Donev, Fai, and Vanden-Eijnden \cite{DFVE14}, Donev and Vanden-Eijnden \cite{DVE14}, Lehmann, Konarovskyi, von Renesse \cite{LKR19}, and Konarovskyi and von Renesse \cite{KR17,KR15}, Sturm and von Renesse \cite{StuvRe2009}, and Cornalba, Shardlow, and Zimmer \cite{CorShaZim2019,CorShaZim2020}.  For an explanation of the connection between macroscopic fluctuation theory and fluctuating hydrodynamics in the context of the Dean--Kawasaki equation, we refer to Bouchet, Gaw\c edzki, and Nardini \cite{BGN16}.  For an overview of macroscopic fluctuation theory, see Bertini, De Sole, Gabrielli, Jona-Lasinio, and Landim \cite{BDSGJLL15}.

We finally remark that several recent works have developed the connection between conservative SPDEs like \eqref{i_eq} and the fluctuations and large deviations of certain interacting particle systems.  The author and Gess \cite{FehGes19} and \cite{DirFehGes19} have shown along appropriate scaling limits that the small-noise large deviations of generalized equations based on \eqref{i_eq} coincide with those of the zero range and symmetric simple exclusion processes, and these results were extended to equations with interaction kernels by Wu and Zhang \cite{DKIntLDP}.  Cornalba and Fischer \cite{CF23} show that a discretization of the Dean--Kawasaki equation approximates to arbitrary order density fluctuations of independent Brownian motions, and their work was later extended by Cornalba, Fischer, Ingmanns, and Raithel \cite{CFIR23} to weakly interacting diffusions.  See also the related works of Djurdjevac, Kremp, and Perkowski \cite{DKP22}, Gess, Wu, and Zhang \cite{GesWuZha2024}, and Clini and the author \cite{CliFeh2024}.

\section{Stochastic kinetic solutions of \eqref{i_eq}}

We now establish the well-posedness and stability of solutions to \eqref{i_eq}.  The section is organized as follows.  The domain and randomness in the equation are fixed in Section~\ref{sec_setting}.   We define a stochastic kinetic solution in Section~\ref{sec_rkss}, we prove that such solutions are unique in Section~\ref{sec_rkss_unique}, and we prove that such solutions exist in Section~\ref{sec_rkss_exist}.

\subsection{The setting and the randomness in the equation}\label{sec_setting}  We will make the following assumptions on the diffusion coefficient and the noise.

\begin{assumption}\label{assume_d}  Let $d\in\N$, let $T\in(0,\infty)$, let $U\subseteq\R^d$ be a nonempty, smooth, and bounded domain, and assume that there exists $s\in \C^2(U;\R^{d\times d})$ such that $a=ss^t$ and that, for some $\lambda\leq\Lambda\in(0,\infty)$, for every $v\in \R^d$, $\langle av,v\rangle\geq \lambda\abs{v}^2$ and $\langle a^{-1}v,v\rangle \geq\Lambda^{-1}\abs{v}^2$.  Furthermore, assume that $\phi\in \C([0,\infty))\cap\C^{1,\nicefrac{1}{2}}_{\textrm{loc}}((0,\infty))$ satisfies that $\phi(0)=0$ and, for some $\tilde{\lambda}\leq\tilde{\Lambda}\in(0,\infty)$, that $\tilde{\lambda}\leq \phi'(\eta)\leq \tilde{\Lambda}$ for every $\eta\in(0,\infty)$.
\end{assumption}

\begin{assumption}\label{assume_n}  Let $(\O,\F,\P)$ be a probability space, let $(\F_t)_{t\in[0,\infty)}$ be a complete, right-continuous filtration on $(\O,\F)$, and let $\{B_k\}_{k\in\N}$ be independent $d$-dimensional $\F_t$-adapted Brownian motions.  Let $F=(f_k)_{k\in\N}$ for $\C^2$-smooth functions $f_k$, let $\xi^F$ to be the $d$-dimensional noise $\xi^F(x,t) = \textstyle\sum_{k=1}^\infty f_k(x)B_k(t)$, and assume that $0<\langle \xi^F\rangle_1 = \textstyle\sum_{k=1}^\infty f_k^2<\infty$ is spatially constant and that
\[\sup_{x\in U}\langle \nabla\cdot s\xi^F\rangle_1 = \sup_{x\in U}\textstyle\sum_{k=1}^\infty \big(\abs{s^t\nabla f_k}^2+f_k^2\abs{\nabla\cdot s^t}^2\big)<\infty.\]
 \end{assumption}

\subsection{Stochastic kinetic solutions.}\label{sec_rkss}  The solution theory is based on the equation's kinetic formulation.  Since the following derivation is based on the work of Chen and Perthame \cite{CP03} (see also Perthame \cite{PerthameKinetic}), and full details in this context can be found in \cite[Section~2]{FG21}, we only briefly recall some of the main ideas here.  We first remark on the no-flux Neumann boundary condition.  Based on equation \eqref{i_2}, this means formally that, for the outer unit normal $\nu$ to $U$,
\[\big(a\nabla\phi(\rho)-\sqrt{\rho}s\dd\xi^F+\frac{\langle \xi\rangle_1}{8}a\nabla\log(\rho)+\frac{\langle \xi\rangle_1}{4}\mathbf{1}_{\{\rho>0\}}s(\nabla\cdot s^t)\big)\cdot \nu=0\;\;\textrm{on}\;\;\partial U,\]
which, however, must be interpreted weakly on the level of the equation.  We first do this on the level of the equation's entropy formulation:  if $S$ is a smooth function satisfying $S(0)=S'(0)=0$, an application of It\^o's formula and the no-flux boundary condition show that, for every $\psi\in\C^\infty(\overline{U})$,
\begin{align*}
& \dd\Big(\medint\int_{U}S(\rho)\psi\Big)  =\Big(-\medint\int_{U}(S'(\rho)\nabla\psi)\cdot a\nabla\phi(\rho) - \medint\int_{U} (S''(\rho)\psi)\phi'(\rho)\nabla\rho\cdot a\nabla\rho\Big)\dt
\\  & -\Big(\frac{\langle\xi^F\rangle_1}{8}\medint\int_{U}(S'(\rho)\nabla\psi)\cdot a\nabla\log(\rho)+\frac{\langle\xi^F\rangle_1}{4}\medint\int_{U}\mathbf{1}_{\{\rho>0\}}s(\nabla\cdot s^t)\cdot (S'(\rho)\nabla\psi)\Big)\dt
\\ & +\Big(\frac{1}{2}\medint\int_{U}(S''(\rho)\psi)\rho\langle\nabla\cdot s\xi^F\rangle_1+\frac{\langle\xi^F\rangle_1}{4}\medint\int_{U}(S''(\rho)\psi)\mathbf{1}_{\{\rho>0\}}s(\nabla\cdot s^t)\cdot\nabla\rho\Big)\dt
\\ & +\medint\int_{U} (S''(\rho)\psi)\sqrt{\rho}\nabla\rho\cdot s\dd\xi^F+\medint\int_U (S'(\rho)\nabla\psi)\cdot \sqrt{\rho}s\dd\xi^F.
\end{align*}
However, in general, for nonnegative $\psi$ and for convex functions $S$ the composition $S(\rho)\psi$ will only satisfy the entropy inequality that the lefthand side of this equation is less than or equal to the righthand side.  The kinetic formulation quantifies this inequality exactly.  Precisely, we introduce an additional velocity variable $\eta\in\R$ and define the kinetic function $\overline{\chi}\colon\R^2\rightarrow\{-1,0,1\}$ as $\overline{\chi}(s,\eta) = \mathbf{1}_{\{0<\eta<s\}}-\mathbf{1}_{\{s<\eta<0\}}$, and the kinetic function $\chi$ of the solution $\rho$ as the composition
\[\chi_t(x,\eta) = \overline{\chi}(\rho(x,t),\eta)=\mathbf{1}_{\{0<\eta<\rho(x,t)\}},\]
where the final equality follows from the nonnegativity of $\rho$.  It follows from the distributional equalities, for the one-dimensional Dirac delta distribution $\delta_0$ and $\delta_{\rho(x,t)}=\delta_0(\eta-\rho(x,t))$,
\[\nabla_x\chi_t(x,\eta) = \delta_{\rho(x,t)}\nabla\rho(x,t)\;\;\textrm{and}\;\;\partial_\xi\chi_t(x,\eta) = \delta_0-\delta_{\rho(x,t)},\]
and the equality $\int_\R\chi_t(x,\eta)S'(\eta)\deta = S(\rho(x,t))-S(0)$ that, for every nonnegative $\psi$, convex $S$ satisfying $S(0)=S'(0)=0$, and $\Psi(\xi,x) = S'(\xi)\psi(x)$,
\begin{align*}
& \dd\Big(\medint\int_{U}\medint\int_\R\chi\Psi\Big)  \leq \Big(-\medint\int_{U} (\nabla_x \Psi)(x,\rho)\cdot a\nabla\phi(\rho) - \medint\int_{U} (\partial_\eta\Psi)(x,\rho)\phi'(\rho) \nabla\rho\cdot a\nabla\rho\Big)\dt
\\  & -\Big(\frac{\langle\xi^F\rangle_1}{8}\medint\int_{U} (\nabla_x\Psi)(x,\rho)\cdot a\nabla\log(\rho)+\frac{\langle\xi^F\rangle_1}{4}\medint\int_{U}\mathbf{1}_{\{\rho>0\}}s(\nabla\cdot s^t)\cdot (\nabla_x\Psi)(x,\rho)\Big)\dt
\\ & +\Big(\frac{1}{2}\medint\int_{U}(\partial_\eta\Psi)(x,\rho)\rho\langle\nabla\cdot s\xi^F\rangle_1+\frac{\langle\xi^F\rangle_1}{4}\medint\int_{U}(\partial_\eta\Psi)(x,\rho)\mathbf{1}_{\{\rho>0\}}s(\nabla\cdot s^t)\cdot\nabla\rho\Big)\dt
\\ & +\medint\int_{U}(\nabla_x\Psi)(x,\rho)\cdot\sqrt{\rho} s\dd\xi^F+\medint\int_U(\partial_\eta\Psi)(x,\rho)\sqrt{\rho}\nabla\rho\cdot s\dd\xi^F,
\end{align*}
where here we write $(\nabla\Psi)(x,\rho)$ to mean the gradient of $\Psi$ evaluated at the point $(x,\rho)$ as opposed to the full gradient of the composition $\Psi(x,\rho)$.

The kinetic formulation quantifies the above entropy inequality exactly using a kinetic defect measure, which is a nonnegative Radon measure $q$ on $U\times\R\times[0,T]$ that satisfies, in the sense of measures,
\[\delta_{\rho}\phi'(\eta)\big(\nabla\rho\cdot a\nabla\rho\big) \leq q.\]
We then have using the density of linear combinations of functions of the type $S'(\xi)\psi(x)$ in $\C^\infty(\overline{U}\times\R)$ that, for every $\Psi\in\C^\infty(\overline{U}\times\R)$,
\begin{align*}
& \medint\int_\R\medint\int_{U}\chi \Psi\Big|_{r=0}^t =  -\medint\int_0^t\medint\int_{U} (\nabla_x \Psi)(x,\rho)\cdot a\nabla\phi(\rho) - \medint\int_0^t\medint\int_\R\medint\int_{U} (\partial_\eta\Psi) q
\\  & -\frac{\langle\xi^F\rangle_1}{8}\medint\int_0^t\medint\int_{U} (\nabla_x\Psi)(x,\rho)\cdot a\nabla\log(\rho)-\frac{\langle\xi^F\rangle_1}{4}\medint\int_0^t\medint\int_{U}\mathbf{1}_{\{\rho>0\}}s(\nabla\cdot s^t)\cdot (\nabla_x\Psi)(x,\rho)
\\ & +\frac{1}{2}\medint\int_0^t\medint\int_{U}(\partial_\eta\Psi)(x,\rho)\rho\langle\nabla\cdot s\xi^F\rangle_1+\frac{\langle\xi^F\rangle_1}{4}\medint\int_0^t\medint\int_{U}(\partial_\eta\Psi)(x,\rho)\mathbf{1}_{\{\rho>0\}}s(\nabla\cdot s^t)\cdot\nabla\rho
\\ & +\medint\int_0^t\medint\int_{U}(\nabla_x\Psi)(x,\rho)\cdot\sqrt{\rho} s\dd\xi^F+\medint\int_0^t\medint\int_U(\partial_\eta\Psi)(x,\rho)\sqrt{\rho}\nabla\rho\cdot s\dd\xi^F.
\end{align*}
The above equation allows to consider test functions that are compactly supported on $\overline{U}\times(0,\infty)$ in the velocity variable, which allows to localize the solution away from its zero set and therefore the singularities of the logarithm and square root.  Furthermore, by also localizing the solution away from infinity, it allows to treat nonnegative initial data that is only integrable.

\begin{definition}\label{d_measure}  Under Assumption~\ref{assume_n}, a kinetic measure is a map $q$ from $\O$ to the space of nonnegative, locally finite Radon measures on $U\times\R\times[0,T]$ that satisfies the property that, for every $\psi\in\C^\infty_c(U\times\R)$,
\[(\omega,t)\in\O\times[0,T]\rightarrow \medint\int_0^t\medint\int_\R\medint\int_{U}\psi(\xi,x)\dd q(\omega)\;\;\textrm{is $\F_t$-predictable.}\]
\end{definition}

 \begin{definition}\label{sol_def}  Under Assumptions~\ref{assume_d} and \ref{assume_n}, let $\rho_0\in L^1(\O;L^1(U))$ be $\F_0$-measurable.  A \emph{stochastic kinetic solution} of \eqref{i_eq} with initial data $\rho_0$ is a nonnegative, $\P$-a.s.\ continuous $L^1(U)$-valued $\F_t$-predictable function $\rho$ that satisfies the following three properties.
\begin{enumerate}[(i)]
\item \emph{Preservation of mass}:  $\P$-a.s.\ for every $t\in[0,T]$,
\begin{equation}\label{d_mass}\norm{\rho(\cdot,t)}_{L^1(U)}=\norm{\rho_0}_{L^1(U)}.\end{equation}
\item \emph{Local regularity}: $\P$-a.s.\ we have that, for every $K\in [1,\infty)$,
\begin{equation}\label{d_s2} (\rho\wedge K)\vee K^{-1}\in L^2([0,T];H^1(U)).\end{equation}
\item \emph{Non-vanishing}:  $\P$-a.s.\ on the event $\{\rho_0=0\}$ we have that $\rho=0$ and, in the sense of Remark~\ref{remark_sol}, $\P$-a.s.\ on the event $\{\rho_0\neq 0\}$ we have that the $(d+1)$-dimensional and $d$-dimensional Lebesgue measures
\begin{equation}\label{s_log} \abs{\{(x,t)\in U\times[0,T]\colon \rho(x,t)=0\}}=\abs{\{(x,t)\in \partial U\times[0,T]\colon \rho(x,t)=0\}}=0.\end{equation}
\end{enumerate}
Furthermore, there exists a kinetic measure $q$ that satisfies the following three properties.
\begin{enumerate}[(i)]
\setcounter{enumi}{3}
\item \emph{Regularity}: $\P$-a.s.\ as nonnegative measures on $U\times\R\times[0,T]$,
\begin{equation}\label{d_s3}\delta_{\rho(x,t)}\phi'(\eta)\big(\nabla\rho\cdot a\nabla\rho\big)\leq q.\end{equation}
\item \emph{Vanishing at infinity}:  $\P$-a.s.\ we have that
\begin{equation}\label{d_s4}\liminf_{M\rightarrow\infty}\big(q(U\times[M,M+1]\times[0,T])\big)=0.\end{equation}
\item \emph{The equation}: $\P$-a.s.\ for every $\psi\in \C^\infty_c(\overline{U}\times(0,\infty))$ and $t\in[0,T]$,
\begin{align}\label{sol_eq}
& \medint\int_\R\medint\int_{U}\chi(\xi,x,r)\psi(\xi,x) = \medint\int_\R\medint\int_{U}\overline{\chi}(\rho_0)\psi(\xi,x) -\medint\int_0^t\medint\int_{U} (\nabla_x \psi)(x,\rho)\cdot a\nabla\phi(\rho)- \medint\int_0^t\medint\int_{U}\medint\int_\R (\partial_\eta\psi) q
\\  \nonumber & -\frac{\langle\xi^F\rangle_1}{8}\medint\int_0^t\medint\int_{U} (\nabla_x\psi)(x,\rho)\cdot a\nabla\log(\rho)-\frac{\langle\xi^F\rangle_1}{4}\medint\int_0^t\medint\int_{U}\mathbf{1}_{\{\rho>0\}}s(\nabla\cdot s^t)\cdot (\nabla_x\psi)(x,\rho)
\\ \nonumber & +\frac{1}{2}\medint\int_0^t\medint\int_{U}(\partial_\eta\psi)(x,\rho)\rho\langle\nabla\cdot s\xi^F\rangle_1+\frac{\langle\xi^F\rangle_1}{4}\medint\int_0^t\medint\int_{U}(\partial_\eta\psi)(x,\rho)\mathbf{1}_{\{\rho>0\}}s(\nabla\cdot s^t)\cdot\nabla\rho
\\ \nonumber & +\medint\int_{U}(\nabla_x\psi)(x,\rho)\cdot\sqrt{\rho} s\dd\xi^F+\medint\int_U(\partial_\eta\psi)(x,\rho)\sqrt{\rho}\nabla\rho\cdot s\dd\xi^F.
\end{align}
\end{enumerate}
\end{definition}

\begin{remark}\label{remark_sol}  The local regularity \eqref{d_s2} is designed to handle two potential problems.  The first is the lack of regularity on the set $\{\rho=0\}$, which is less important for $\rho$ itself but is necessary to treat the logarithm and porous media type nonlinearities.  The second is the lack of integrability of $\nabla\rho$ on the set where $\rho$ is large, when considering initial data that is not $L^2$-integrable.

Concerning the non-vanishing condition \eqref{s_log}, it follows from \eqref{d_s2} that for every $K\in[1,\infty)$ the function $(\rho\vee K)\wedge K^{-1}$ has a trace on the boundary.  In the case that $\rho$ itself does not have a trace, we understand the non-vanishing condition \eqref{s_log} in the sense that
\[\abs{\cap_{K=1}^\infty \{(x,t)\in\partial U\times[0,T]: (\rho\wedge 1)\vee K^{-1}=K^{-1}\}}=0.\]
We lastly observe that that the coefficient $\mathbf{1}_{\{\rho>0\}}$ is unnecessary in the final term of the penultimate line of \eqref{sol_eq} because $\psi$ is compactly supported in $(0,\infty)$ in the velocity variable, so $\mathbf{1}_{\{\rho>0\}}=1$ on the support of $\psi$ and its derivatives. The discontinuity does reappear in the proof of uniqueness, however, when removing certain cutoff functions in the velocity variable in step \eqref{u_32} of Theorem~\ref{thm_unique}. \end{remark}

\subsection{Uniqueness of stochastic kinetic solutions}\label{sec_rkss_unique}  The proof of uniqueness is based formally on differentiating the equality
\[\medint\int_{U}\abs{\rho_1-\rho_2} = \medint\int_{U}\medint\int_\R\abs{\chi_1-\chi_2}^2 = \medint\int_{U}\medint\int_\R\chi_1-\chi_2-2\chi_1\chi_2,\]
for the kinetic function $\chi_i$ of a solution $\rho_i$, which follows from the nonnegativity of the solutions and the fact that the the kinetic functions $\chi_i$ are $\{0,1\}$-valued.  However, due to the singularities of the coefficients and the initial data that is only $L^1$-integrable, it is necessary to introduce cutoff functions in the velocity variable that localize the solutions away from regions in which they approach zero and infinity.  Property \eqref{d_s4} of Definition~\ref{sol_def} is used to remove the cutoff at infinity, and Proposition~\ref{prop_zero} is used to remove the cutoff near zero.

\begin{prop}\label{prop_zero}  Under Assumptions~\ref{assume_d} and \ref{assume_n}, if $\rho$ is a stochastic kinetic solution of \eqref{i_eq} in the sense of Definition~\ref{sol_def} with nonnegative, $\F_0$-measurable initial data $\rho_0\in L^1(\O;L^1(U))$, it follows $\P$-a.s.\ that
\[\lim_{\beta\rightarrow 0}\E[\beta^{-1}q(U\times [\nicefrac{\beta}{2},\beta]\times[0,T])]=\liminf_{\beta\rightarrow 0}\big(\beta^{-1}q(U\times [\nicefrac{\beta}{2},\beta]\times[0,T])\big)= 0.\]
\end{prop}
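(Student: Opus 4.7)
The plan is to deduce the statement from the integrability of the logarithm of $\rho$ provided by Proposition~\ref{prop_h1} (estimate \eqref{int_0}), combined with a test of the kinetic equation \eqref{sol_eq} against a logarithmic weight in the velocity variable. First I would make the elementary reduction: since $\eta^{-1}\geq\beta^{-1}$ on $[\nicefrac{\beta}{2},\beta]$, one has the deterministic bound
\[\beta^{-1}q(U\times[\nicefrac{\beta}{2},\beta]\times[0,T])\leq \medint\int_0^T\medint\int_U\medint\int_{[\nicefrac{\beta}{2},\beta]}\eta^{-1}\dd q,\]
so if $\E\medint\int_0^T\medint\int_U\medint\int_{(0,1]}\eta^{-1}\dd q<\infty$ then the right hand side vanishes in expectation as $\beta\to 0$ by dominated convergence. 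This yields the $\lim$ assertion, and the $\liminf$ assertion then follows by Fatou's lemma applied to the nonnegative random variable $\beta^{-1}q(\cdot)$, since $\E[\liminf_{\beta}\beta^{-1}q(\cdot)]\leq\liminf_{\beta}\E[\beta^{-1}q(\cdot)]=0$.

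The core task is therefore to prove $\E\medint\int_0^T\medint\int_U\medint\int_{(0,1]}\eta^{-1}\dd q<\infty$.  For this I would test \eqref{sol_eq} against a family of smooth nonnegative test functions $\psi_\delta(\eta)$ compactly supported in $(\nicefrac{\delta}{2},2)\subset(0,\infty)$ and satisfying $\partial_\eta\psi_\delta(\eta)=-\eta^{-1}$ on $[\delta,1]$, for $\delta\in(0,1)$.  With this choice, $-\int(\partial_\eta\psi_\delta)\dd q$ reproduces $\int_{[\delta,1]}\eta^{-1}\dd q$ up to harmless smoothing contributions near $\eta=\nicefrac{\delta}{2}$ and $\eta=1$, while the initial/final contribution $\int_\R\int_U\chi\,\psi_\delta|_0^t$ is bounded uniformly in $\delta$ by $C(1+\norm{\rho_0}_{L^1(U)})$, since $\int_0^\rho\psi_\delta(\eta)\deta\leq C(1+\rho)$ uniformly in $\delta$ (as $-\log\eta$ is integrable near zero) and by the mass conservation \eqref{d_mass}.

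The remaining terms on the right hand side of \eqref{sol_eq} must be controlled uniformly in $\delta$ in expectation.  The It\^o drift $\tfrac{1}{2}\int(\partial_\eta\psi_\delta)(\rho)\rho\langle\nabla\cdot s\xi^F\rangle_1$ reduces on $\{\rho\in[\delta,1]\}$ to the spatial integral of a bounded function and is controlled by Assumption~\ref{assume_n}.  The delicate cross-term $\tfrac{\langle\xi^F\rangle_1}{4}\int(\partial_\eta\psi_\delta)(\rho)\mathbf{1}_{\{\rho>0\}}s(\nabla\cdot s^t)\cdot\nabla\rho$ equals $\tfrac{\langle\xi^F\rangle_1}{4}\int\mathbf{1}_{\{\rho>0\}}s(\nabla\cdot s^t)\cdot\nabla\big(\psi_\delta(\rho)\big)$ by the chain rule.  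Invoking the non-vanishing property \eqref{s_log} from Definition~\ref{sol_def} to replace $\mathbf{1}_{\{\rho>0\}}$ by unity almost surely, an integration by parts in $x$ produces a bulk contribution $-\int\big(\nabla\cdot(s(\nabla\cdot s^t))\big)\psi_\delta(\rho)$ and a boundary trace $\int_{\partial U}\big(s(\nabla\cdot s^t)\cdot\nu\big)\psi_\delta(\rho)$.  Both are bounded in expectation by the integrability of $|\log(\rho\wedge 1)|$ in the interior, from \eqref{int_0}, and in the trace sense on $\partial U$, interpreted via the local regularity \eqref{d_s2} and Remark~\ref{remark_sol}.  The stochastic integral has zero expectation after a standard localization in $\omega$.

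Passing $\delta\to 0$ by monotone convergence then yields $\E\medint\int_0^T\medint\int_U\medint\int_{(0,1]}\eta^{-1}\dd q<\infty$, closing the reduction.  I expect the main obstacle to be this cross-term, which is the only place the discontinuous coefficient $\mathbf{1}_{\{\rho>0\}}$ and the no-flux boundary enter the argument: the integration by parts requires simultaneously reconciling the pointwise indicator (handled via the dichotomy built into \eqref{s_log}), the regularity of $\psi_\delta(\rho)$ up to $\partial U$ (handled via the local regularity \eqref{d_s2} and the trace interpretation of Remark~\ref{remark_sol}), and the integrability of $\log(\rho\wedge 1)$ both in the bulk and in trace, all of which rely on the Stratonovich-to-It\^o correction captured by Proposition~\ref{prop_h1}.
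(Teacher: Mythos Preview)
Your reduction to $\E\int_{(0,1]}\eta^{-1}\dd q<\infty$ is clean, but the method you propose to establish it has a genuine gap: you invoke Proposition~\ref{prop_h1} (estimate \eqref{int_0}) for the integrability of $\abs{\log(\rho\wedge 1)}$, yet that estimate is proved only for the \emph{approximate} solutions of \eqref{e_1} in the existence section. Proposition~\ref{prop_zero} must hold for an \emph{arbitrary} stochastic kinetic solution in the sense of Definition~\ref{sol_def}, since it feeds into the uniqueness proof of Theorem~\ref{thm_unique}. The definition gives you only the non-vanishing property \eqref{s_log} (the zero set has measure zero in $U$ and on $\partial U$), not $L^1$-integrability of $\log(\rho\wedge 1)$ in the bulk, and certainly not trace integrability on $\partial U$. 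With your logarithmic test function $\psi_\delta$, the integration-by-parts in the cross term produces $\psi_\delta(\rho)\sim\abs{\log\rho}$ weights on both $U$ and $\partial U$, and you have no way to bound these uniformly in $\delta$ from the axioms of Definition~\ref{sol_def} alone.

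The paper avoids this entirely by choosing a different test function: $\psi_\beta$ with $\psi_\beta(0)=\psi_\beta'(0)=0$ and $\psi_\beta''=(\nicefrac{\beta}{2})^{-1}\mathbf{1}_{[\nicefrac{\beta}{2},\beta]}$, so that $\psi_\beta'\leq 1$ and $\psi_\beta'(\rho)\to\mathbf{1}_{\{\rho>0\}}$ as $\beta\to 0$. After integrating the cross term by parts one obtains $\int_{\partial U}\psi_\beta'(\rho)(s(\nabla\cdot s^t)\cdot\nu)-\int_U\psi_\beta'(\rho)\nabla\cdot(s(\nabla\cdot s^t))$, which is bounded uniformly in $\beta$ because $\psi_\beta'\leq 1$, and in the limit $\beta\to 0$ becomes $\int_{\partial U}(s(\nabla\cdot s^t)\cdot\nu)-\int_U\nabla\cdot(s(\nabla\cdot s^t))=0$ by the divergence theorem, using only \eqref{s_log} to identify the limit of $\psi_\beta'(\rho)$ with $1$. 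The remaining terms are handled by dominated convergence and mass conservation. No log integrability is needed, and the argument stays strictly within Definition~\ref{sol_def}.
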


\begin{proof}  If $\rho_0=0$ it follows from \eqref{d_mass} that $\rho=0$ and then from \eqref{sol_eq} that $q=0$, for which the claim follows.  We therefore assume that $\rho_0\neq 0$ and using \eqref{s_log} that $\mathbf{1}_{\{\rho>0\}}=1$ almost everywhere on $U\times[0,T]$.  It follows from a straightforward regularization argument by convolution and the vanishing of the measures at infinity \eqref{d_s4} that, for every $\beta\in(0,1)$, the function $\psi_\beta$ satisfying $\psi_\beta(0)=\psi_\beta'(0)=0$ and $\psi_\beta''(\eta)=(\nicefrac{\beta}{2})^{-1}\mathbf{1}_{[\nicefrac{\beta}{2},\beta]}(\eta)$ is an admissible test function for equation \eqref{sol_eq}.  We therefore have $\P$-a.s.\ for every $\b\in(0,1)$ that
\begin{align}\label{u_1}
& \medint\int_{U}\psi_\beta(\rho)\Big|_{r=0}^{r=T}+\medint\int_0^T\medint\int_{U}\medint\int_\R(\nicefrac{\beta}{2})^{-1}\mathbf{1}_{[\nicefrac{\beta}{2},\beta]} q = \frac{1}{2}\medint\int_0^T\medint\int_{U}(\nicefrac{\beta}{2})^{-1}\mathbf{1}_{[\nicefrac{\beta}{2}\leq \rho\leq \beta]}\rho\langle\nabla\cdot s\xi^F\rangle_1
\\ \nonumber & +\frac{\langle\xi^F\rangle_1}{4}\medint\int_0^T\medint\int_{U}s(\nabla\cdot s^t)\cdot(\nicefrac{\beta}{2})^{-1}\mathbf{1}_{[\nicefrac{\beta}{2}\leq\rho\leq\beta]}\nabla\rho + \medint\int_0^T\medint\int_{U}(\nicefrac{\beta}{2})^{-1}\mathbf{1}_{[\nicefrac{\beta}{2}\leq\rho\leq \beta]}\sqrt{\rho}\nabla\rho\cdot s\dd\xi^F.
\end{align}
For the first term on the righthand side of \eqref{u_1}, we have that
\[\frac{1}{2}\medint\int_0^T\medint\int_{U}(\nicefrac{\beta}{2})^{-1}\mathbf{1}_{[\nicefrac{\beta}{2}\leq \rho\leq \beta]}\rho\langle\nabla\cdot s\xi^F\rangle_1\leq \medint\int_0^T\medint\int_{U}\mathbf{1}_{[\nicefrac{\beta}{2}\leq \rho\leq \beta]}\langle\nabla\cdot s\xi^F\rangle_1,\]
and therefore, using the boundedness of $\langle\nabla\cdot s\xi^F\rangle_1$ and the dominated convergence theorem, we have $\P$-a.s.\ that
\begin{equation}\label{u_01}\lim_{\beta\rightarrow 0}\frac{1}{2}\medint\int_0^T\medint\int_{U}(\nicefrac{\beta}{2})^{-1}\mathbf{1}_{[\nicefrac{\beta}{2}\leq \rho\leq \beta]}\rho\langle\nabla\cdot s\xi^F\rangle_1 = 0.\end{equation}
For the second term on the righthand side of \eqref{u_1}, using the local regularity \eqref{d_s2} of the solution, which implies the existence of a trace for $\psi_\beta'(\rho)$ in the sense of Evans \cite[Section~5.5, Theorem~1]{Eva2010}, and the $\C^2$-regularity of $s$, we have, for the outer unit normal $\nu$ of the smooth domain $U$,
\begin{align*}
\frac{\langle\xi^F\rangle_1}{4}\medint\int_0^T\medint\int_{U}s(\nabla\cdot s^t)\cdot(\nicefrac{\beta}{2})^{-1}\mathbf{1}_{[\nicefrac{\beta}{2}\leq\rho\leq\beta]}\nabla\rho & = \frac{\langle\xi^F\rangle_1}{4}\medint\int_0^T\medint\int_{\partial U} \psi_\beta'(\rho)\big(s(\nabla\cdot s^t)\cdot\nu\big)
\\ & \quad -\frac{\langle\xi^F\rangle_1}{4}\medint\int_0^T\medint\int_U \psi'_\beta(\rho)\nabla\cdot (s(\nabla\cdot s^t)).
\end{align*}
Since by definition we have that $\psi_\beta'(\rho)\rightarrow \mathbf{1}_{\{\rho >0\}}$ as $\beta\rightarrow 0$, and since $\mathbf{1}_{\{\rho>0\}}=1$ almost everywhere in $U$ and on the boundary, it follows using the integration by parts formula in the final line that
\begin{align}\label{u_02}
& \lim_{\beta\rightarrow 0}\frac{\langle\xi^F\rangle_1}{4}\medint\int_0^T\medint\int_{U}s(\nabla\cdot s^t)\cdot(\nicefrac{\beta}{2})^{-1}\mathbf{1}_{[\nicefrac{\beta}{2}\leq\rho\leq\beta]}\nabla\rho
\\ \nonumber & = \frac{\langle\xi^F\rangle_1}{4}\medint\int_0^T\medint\int_{\partial U} \big(s(\nabla\cdot s^t)\cdot\nu\big) -\frac{\langle\xi^F\rangle_1}{4}\medint\int_0^T\medint\int_U \nabla\cdot (s(\nabla\cdot s^t))=0.
\end{align}
For the stochastic integral in \eqref{u_1}, we have simply that, for every $\beta\in(0,1)$,
\begin{equation}\label{u_03}\E\Big[\medint\int_0^T\medint\int_{U}(\nicefrac{\beta}{2})^{-1}\mathbf{1}_{[\nicefrac{\beta}{2}\leq\rho\leq \beta]}\sqrt{\rho}\nabla\rho\cdot s\dd\xi^F\Big]=0.\end{equation}
Finally, for the first term on the lefthand side of \eqref{u_1}, since we have by definition that $\psi_\beta(\rho)\rightarrow\rho$ as $\beta\rightarrow 0$, we have using the preservation of mass \eqref{d_mass} that
\begin{equation}\label{u_04} \lim_{\beta\rightarrow 0}\Big(\medint\int_{U}\psi_\beta(\rho)\Big|_{r=0}^{r=T}\Big)=\medint\int_U \rho(x,T)-\medint\int_{U}\rho_0(x) = 0.\end{equation}
Returning to \eqref{u_1}, it follows from \eqref{u_01}, \eqref{u_02}, \eqref{u_03}, and \eqref{u_04} that
\[\lim_{\beta\rightarrow 0}\E\Big[\medint\int_0^T\medint\int_{U}\medint\int_\R(\nicefrac{\beta}{2})^{-1}\mathbf{1}_{[\nicefrac{\beta}{2},\beta]} q\Big]=0,\]
from which the final claim follows using Fatou's lemma.   \end{proof}

\begin{lem}\label{u_lem}  Let $(X,\mathcal{S},\mu)$ be a measure space and let $K\in\mathbb{N}$.  If $\{f_k\colon X\rightarrow\mathbb{R}\}_{k\in\{1,2,\ldots,K\}}\subseteq L^1(X)$, and if for every $k\in\{1,2,\ldots,K\}$ the $\{B_{n,k}\}_{n\in\mathbb{N}}\subseteq \mathcal{S}$ are pairwise disjoint subsets, then
\[\liminf_{n\rightarrow\infty}\big(n\textstyle\sum_{k=1}^K\medint\int_{B_{n,k}}\abs{f_k}\dd\mu\big)=0.\]
\end{lem}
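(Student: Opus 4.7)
The plan is to argue by contradiction using the summability of $\int_{B_{n,k}}|f_k|\dd\mu$ over $n$, which comes for free from the pairwise disjointness of the $B_{n,k}$ for each fixed $k$ and from the integrability of $f_k$. The core observation is that the harmonic series $\sum_n \frac{1}{n}$ diverges, so if $n\sum_{k=1}^K\int_{B_{n,k}}\abs{f_k}\dd\mu$ were bounded away from zero along the tail of $n$, then $\sum_{n}\sum_{k=1}^K\int_{B_{n,k}}\abs{f_k}\dd\mu$ would have to diverge, contradicting the finiteness bound.

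Concretely, first I would observe that for each fixed $k\in\{1,\ldots,K\}$, the pairwise disjointness of $\{B_{n,k}\}_{n\in\N}$ together with $f_k\in L^1(X)$ gives
\[
\textstyle\sum_{n=1}^\infty \medint\int_{B_{n,k}}\abs{f_k}\dd\mu \;=\; \medint\int_{\cup_n B_{n,k}}\abs{f_k}\dd\mu \;\leq\; \norm{f_k}_{L^1(X)}<\infty,
\]
and then summing over the finite index set $\{1,\ldots,K\}$ yields $\sum_{n=1}^\infty \sum_{k=1}^K \int_{B_{n,k}}\abs{f_k}\dd\mu \leq \sum_{k=1}^K \norm{f_k}_{L^1(X)}<\infty$.

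Next I would assume for contradiction that $\liminf_{n\to\infty}\bigl(n\sum_{k=1}^K\int_{B_{n,k}}\abs{f_k}\dd\mu\bigr)=c>0$. By definition of the liminf there exists $N\in\N$ such that for every $n\geq N$,
\[
\textstyle\sum_{k=1}^K\medint\int_{B_{n,k}}\abs{f_k}\dd\mu \;\geq\; \frac{c}{2n}.
\]
Summing over $n\geq N$ produces $\sum_{n\geq N}\frac{c}{2n}=\infty$ on one side, while the other side is bounded by $\sum_{k=1}^K\norm{f_k}_{L^1(X)}<\infty$ by the first step, a contradiction. This forces $\liminf_{n\to\infty}\bigl(n\sum_{k=1}^K\int_{B_{n,k}}\abs{f_k}\dd\mu\bigr)=0$.

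There is no real obstacle here; the only care needed is to use that $K$ is finite so that the sum over $k$ may be interchanged with the sum over $n$ without issue, and to record the standard fact that $\liminf_n x_n = c > 0$ implies $x_n \geq c/2$ for all sufficiently large $n$. The argument is sharp in the sense that the harmonic divergence of $\sum 1/n$ is precisely what forces the liminf (rather than merely the limit) to vanish.
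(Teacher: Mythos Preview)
Your proof is correct and follows the same overall strategy as the paper's: assume for contradiction that the $\liminf$ is positive, deduce that $\sum_{k}\int_{B_{n,k}}|f_k|\dd\mu\ge \tfrac{c}{2n}$ for all large $n$, and play this off against the divergence of the harmonic series and the summability coming from disjointness. The one difference is in bookkeeping: the paper, after reaching the lower bound $\tfrac{\varepsilon}{2n}$, invokes a pigeonhole argument to find a single index $k_0$ for which $\sum_{n\in\mathcal I_{N,k_0}}\tfrac{1}{n}=\infty$ and derives the contradiction from $f_{k_0}\in L^1$ alone, whereas you sum over $k$ at the outset and compare directly with $\sum_{k=1}^K\norm{f_k}_{L^1}$. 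Your route is slightly cleaner since the finiteness of $K$ makes the pigeonhole step unnecessary; the paper's version has the minor advantage of locating the contradiction in a single $f_{k_0}$, but nothing in the statement or its later use requires that.
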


\begin{proof}   The proof follows by contradiction.  If not, for some $\ve\in(0,1)$ we have that
\[\liminf_{n\rightarrow\infty}\big(n\textstyle\sum_{k=1}^K\medint\int_{B_{n,k}}\abs{f_k}\dd\mu\big)\geq\ve,\]
which implies that there exists $N\in\mathbb{N}$ such that, for every $n\geq N$,
\[n\textstyle\sum_{k=1}^K\medint\int_{B_{n,k}}\abs{f_k}\dd\mu\geq \frac{\ve}{2}.\]
For every $k\in\{1,2,\ldots,K\}$ let $\mathcal{I}_{N,k}\subseteq[N,N+1,\ldots)$ be defined by
\[\mathcal{I}_{N,k}=\big\{n\in[N,N+1,\ldots)\colon \medint\int_{B_{n,k}}\abs{f_k}\dd\mu\geq \frac{\ve}{2Kn}\big\}.\]
Since $[N,N+1,\ldots)=\cup_{k=1}^K\mathcal{I}_{N,k}$ and $\textstyle\sum_{n=N}^\infty\frac{1}{n}=\infty$, there must exist at least one $k_0\in\{1,2,\ldots,K\}$ such that $\textstyle\sum_{n\in \mathcal{I}_{N,k_0}}\frac{1}{n}=\infty$.  This contradicts the assumption that $f_{k_0}\in L^1(X)$, since by disjointness of the $B_{n,k_0}$ we have that
\[\infty=\textstyle\sum_{n\in\mathcal{I}_{N,k_0}}\frac{1}{n}\leq \frac{2K}{\ve}\textstyle\sum_{n\in\mathcal{I}_{N,k_0}}\medint\int_{B_{n,k_0}}\abs{f_{k_0}}\dd\mu\leq \frac{2K}{\ve}\medint\int_X \abs{f_{k_0}}<\infty,\]
which completes the proof.  \end{proof}

\begin{thm}\label{thm_unique}  Under Assumptions~\ref{assume_d} and \ref{assume_n}, let $\rho_{1,0},\rho_{2,0}\in L^1(\O;L^1(U))$ be nonnegative and $\F_0$-measurable.  If $\rho_1,\rho_2$ are stochastic kinetic solutions of \eqref{i_eq} in the sense of Definition~\ref{sol_def} with initial data $\rho_{1,0},\rho_{2,0}$ respectively then, $\P$-a.s.,
\[\max_{t\in[0,T]}\norm{\rho_1(\cdot,t)-\rho_2(\cdot,t)}_{L^1(U)}\leq\norm{\rho_{1,0}-\rho_{2,0}}_{L^1(U)}.\]
\end{thm}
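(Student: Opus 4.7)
The approach is the standard doubling of variables for kinetic solutions, combined with a careful velocity-variable localization that handles the $L^1$ initial data, the logarithmic singularity at $\eta=0$, and the discontinuous coefficient $\mathbf{1}_{\{\rho>0\}}$. The starting identity is, for the $\{0,1\}$-valued kinetic functions $\chi_i = \mathbf{1}_{\{0<\eta<\rho_i\}}$,
\[\medint\int_U \abs{\rho_1 - \rho_2} = \medint\int_U\medint\int_\R (\chi_1 + \chi_2 - 2\chi_1\chi_2),\]
so by the preservation of mass \eqref{d_mass} the theorem reduces to showing that the map $t\mapsto \int_U \int_\R \chi_1\chi_2$ is $\P$-a.s.\ nondecreasing. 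I would prove this first in expectation at a fixed $t$, then upgrade to a pathwise statement using the $\P$-a.s.\ $L^1(U)$-continuity of the $\rho_i$ built into Definition~\ref{sol_def}.

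The first technical step is a double regularization in both space and velocity. For smooth mollifiers $\kappa^{\ve}_x, \kappa^{\ve}_\eta$ and smooth cutoffs $\zeta_\beta$ supported in $(\beta,\infty)$ and $\zeta_M$ supported in $(0, M+1)$ approximating $\mathbf{1}_{(0,\infty)}$ and $\mathbf{1}_{(0,M)}$ respectively, I would test the kinetic equation \eqref{sol_eq} for $\chi_1$ against $\zeta_\beta(\eta)\zeta_M(\eta)\kappa^\ve_x(x-y)\kappa^\ve_\eta(\eta-\zeta)$, symmetrically test the equation for $\chi_2$, integrate against the convolution-smoothed $\chi_j^{\ve}$ of the other solution, and apply the It\^o product formula to express $\E\int_U\int_\R \chi_1\chi_2\zeta_\beta\zeta_M$ at time $t$ in terms of its initial value plus three families of terms: (i) parabolic-dissipation contributions that are absorbed by the kinetic measures $q_i$ via \eqref{d_s3}; (ii) Stratonovich-to-It\^o correction terms involving $\nabla\log\rho_i$ and $\mathbf{1}_{\{\rho_i>0\}}s(\nabla\cdot s^t)$; and (iii) cross quadratic-variation terms from the two stochastic integrals $\nabla\cdot(\sqrt{\rho_i}s\,d\xi^F)$.

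The main obstacle is to verify that, after the regularization $\ve\to 0$, groups (ii) and (iii) cancel in the same algebraic manner as in \cite[Theorem~4.6]{FG21}. Two novel features must be tracked here: the logarithmic correction $\frac{\langle\xi^F\rangle_1}{8}\nabla\cdot(a\nabla\log\rho_i)$ contributes a term that, when paired with its twin from the other equation, exactly matches the cross quadratic variation of the two $\sqrt{\rho_i}$-noises; and the two $\mathbf{1}_{\{\rho_i>0\}}s(\nabla\cdot s^t)$-terms combine, via integration by parts using the $\C^2$-regularity of $s$ and the boundary cancellation already exploited in Proposition~\ref{prop_zero}, into a quantity that vanishes once the velocity cutoff $\zeta_\beta$ is removed. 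The remaining signed contributions are collected into the nonnegative kinetic-measure terms, from which the correct sign can be read off.

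Finally, the two cutoffs are removed in order. Sending $M\to\infty$ along a subsequence uses \eqref{d_s4} to kill the $q_i$-terms supported near $\{\eta\simeq M\}$. Sending $\beta\to 0$ uses Proposition~\ref{prop_zero}, applied simultaneously to $q_1$ and $q_2$ via Lemma~\ref{u_lem}, to eliminate the $q_i$-terms on $\{\nicefrac{\beta}{2}\leq \eta\leq \beta\}$. This last limit is also where the discontinuous indicator $\mathbf{1}_{\{\rho_i>0\}}$ reappears in step \eqref{u_32}: invoking the dichotomy \eqref{s_log} of Definition~\ref{sol_def}, $\P$-a.s.\ on $\{\rho_{i,0}=0\}$ the solution vanishes and the contraction is trivial, while on $\{\rho_{i,0}\neq 0\}$ the indicator equals $1$ almost everywhere both in the interior and on the boundary in the sense of trace, so it is effectively continuous and the limit $\beta\to 0$ commutes with it. Taking expectation yields the $L^1(\O)$-contraction at fixed $t$, and the $\P$-a.s.\ $L^1(U)$-continuity of $\rho_i$ upgrades this to the pathwise supremum bound asserted in the theorem.
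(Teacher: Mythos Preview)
Your high-level strategy matches the paper's: start from $\int_U|\rho_1-\rho_2|=\int_U\int_\R(\chi_1+\chi_2-2\chi_1\chi_2)$, regularize and cut off in velocity, differentiate via the It\^o product rule, and pass to the limits in order. However, three points diverge from the paper, and the second is a genuine gap.

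First, the paper does \emph{not} regularize in space. It convolves only in the velocity variable, $\chi^\delta_{t,i}(y,\eta)=(\chi_{t,i}(y,\cdot)*\kappa^\delta)(\eta)$, and relies on the local $H^1$-regularity \eqref{d_s2} to carry out spatial integrations by parts directly. On a bounded domain with Neumann boundary, spatial mollification would create boundary-layer terms that the paper simply avoids. Second, your plan to prove the contraction ``first in expectation at a fixed $t$, then upgrade to a pathwise statement using the $\P$-a.s.\ $L^1(U)$-continuity'' does not work as stated: an inequality $\E[X_t]\leq \E[X_0]$ for each $t$ does not imply $X_t\leq X_0$ $\P$-a.s., however continuous $t\mapsto X_t$ may be. The paper instead bounds the measure, covariance, and cutoff terms pathwise, and controls the martingale term by applying the Burkh\"older--Davis--Gundy inequality to $\max_{t\in[0,T]}|I^{\textrm{mart}}_t|$ (see \eqref{u_53}--\eqref{u_55}); Fatou's lemma then gives pathwise vanishing along random subsequences $M\to\infty$, $\beta\to 0$, yielding the $\P$-a.s.\ inequality for every $t$ directly. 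Third, a misattribution: Lemma~\ref{u_lem} is not coupled with Proposition~\ref{prop_zero} for $\beta\to 0$. It is used for the $M\to\infty$ limit, specifically to dispatch the term $(M+1)\int_0^T\int_U\mathbf{1}_{\{M\leq\rho_i\leq M+1\}}$ in \eqref{u_029}; Proposition~\ref{prop_zero} handles the kinetic measures near $\eta=0$ on its own. Your description of the key covariance cancellation is also slightly off: the Stratonovich corrections and the cross quadratic variation do not ``exactly match'' but combine, as in \eqref{u_17}, into expressions of the form $(\rho_1^{-1/2}-\rho_2^{-1/2})^2$ and $(\sqrt{\rho_1}-\sqrt{\rho_2})^2$ multiplied by $\overline{\kappa}^\delta_{s,1}\overline{\kappa}^\delta_{s,2}$, which vanish as $\delta\to 0$ because $|\rho_1-\rho_2|<2\delta$ on that support.
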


\begin{proof}   If either $\rho_{1,0}=0$ or $\rho_{2,0}=0$ the claim is an immediate consequence of the nonnegativity and preservation of mass \eqref{d_mass} in Definition~\ref{sol_def}.  We therefore assume that neither $\rho_{1,0}$ or $\rho_{2,0}$ are identically zero.  Let $\chi_1$ and $\chi_2$ be the kinetic functions of $\rho_1$ and $\rho_2$ and for every $\d\in(0,1)$ and $i\in\{1,2\}$ let $\chi^{\d}_{t,i}(y,\eta)=(\chi_{t,i}(y,\cdot)*\kappa^{\d})(\eta)$ for one-dimensional convolution kernels $\kappa^\d$ of scale $\d\in(0,1)$ on $\R$.  For every $\b\in(0,1)$ let $\varphi_\beta$ be defined by $\varphi_\beta(\eta)=0$ for $\eta\in[0,\nicefrac{\beta}{2}]$, $\varphi_\beta(\eta)=1$ for $\eta\in[\beta,\infty)$, and by the linear interpolation between $0$ and $1$ on $[\nicefrac{\beta}{2},\beta]$.  And, for every $M\in[2,\infty)$, let $\zeta_M(\eta)=1$ for $\eta\in[0,M]$, $\zeta_M(\eta)=0$ for $\eta\in[M+1,\infty)$, and be defined by the linear interpolation between $1$ and $0$ on the interval $[M,M+1]$.

Due to the fact that the kinetic functions $\chi_i$ are $\{0,1\}$-valued, for every $t\in[0,T]$,
\begin{align}\label{u_5}
\norm{\rho_1(x,t)-\rho_2(x,t)}_{L^1(U)} & = \norm{\chi_{t,1}(\xi,x)-\chi_{t,2}(\xi,x)}_{L^2(\R\times U)}
\\ \nonumber &  = \medint\int_\R\medint\int_{U}\chi_{t,1}(\xi,x)+\chi_{t,2}(\xi,x)-2\chi_{t,2}(\xi,x)\chi_{t,2}(\xi,x)\dx\dxi
\\ \nonumber & =\lim_{\beta\rightarrow 0}\lim_{M\rightarrow\infty}\lim_{\d\rightarrow 0}\medint\int_\R\medint\int_{U}\big(\chi^{\d}_{t,1}+\chi^{\d}_{t,2}-2\chi^{\d}_{t,1}\chi^{\d}_{t,2}\big)\varphi_\beta\zeta_M.
\end{align}
We will differentiate the righthand side of \eqref{u_5} and then justify taking the limits $\d\rightarrow 0$, $M\rightarrow\infty$, and $\beta\rightarrow 0$ in that order.  We will first differentiate the singleton terms involving $\chi^\d_{t,i}$ for $i\in\{1,2\}$.  It is a consequence of equation \eqref{sol_eq} in Definition~\ref{sol_def} that, for every $\d\in(0,1)$ and $\eta\in (\d,\infty)$---which guarantees that the support of $\kappa^\d(v-\eta)$ is contained compactly in $v\in(0,\infty)$, and that the support of  $\overline{\kappa}^{\d}_{s,i}(y,\eta)=\kappa^{\d}(\rho_i(y,s)-\eta)$ is contained in $\{\rho_i>0\}$---we have that
\begin{align}\label{u_9}
 \medint\int_U\chi^{\d}_{s,i}(y,\eta)\rvert_{s=0}^t & =\medint\int_\R\medint\int_U\chi^\d_{s,i}(y,v)\kappa^\d(v-\eta)\rvert_{s=0}^t
\\ \nonumber & = \medint\int_0^t\partial_\eta\Big(\medint\int_\R\medint\int_U \kappa^\d q_i\Big)-\frac{1}{2}\medint\int_0^t\partial_\eta\Big(\medint\int_U \overline{\kappa}^\d_{s,i}\rho_i \langle \nabla\cdot s\xi^F\rangle_1\Big)
\\ \nonumber & \quad -\frac{\langle\xi^F\rangle_1}{4}\medint\int_0^t\partial_\eta\Big(\medint\int_{U}\overline{\kappa}^\d_{s,i}(y,\eta)s(\nabla\cdot s^t)\cdot\nabla\rho\Big)-\medint\int_0^t\partial_\eta\Big(\medint\int_U\overline{\kappa}^\d_{s,i}\sqrt{\rho}\nabla\rho\cdot s\dd\xi^F\Big),
\end{align}
where due to the choice of $\d$ and $\eta$ we have that $\mathbf{1}_{\{\rho_i>0\}}=1$ on the support of $\overline{\kappa}^\d_{s,i}$.  It is a consequence of \eqref{u_9} that, for every $\beta\in(0,1)$ and $\d\in(0,\nicefrac{\beta}{2})$,
\begin{equation}\label{u_18}
\medint\int_\R\medint\int_{U}\chi^{\d}_{s,i}(y,\eta)\varphi_\beta(\eta)\zeta_M(\eta)\dy\deta\rvert^t_{s=0} = I^{i,\textrm{cut}}_t+I^{i,\textrm{mart}}_t,
\end{equation}
for the cutoff term defined by
\begin{align*}
& I^{i,\textrm{cut}}_t =-\medint\int_0^t\medint\int_{\R}\medint\int_{U}(\kappa^{\d}*q_i)(y,\eta)\partial_\eta(\varphi_\beta(\eta)\zeta_M(\eta))
 \\  & +\frac{1}{2}\medint\int_0^t\medint\int_\R\medint\int_{U}(\langle\nabla\cdot s\xi^F\rangle_1 \rho_i) \big(\overline{\kappa}^{\d}_{s,i}\partial_\eta(\varphi_\beta\zeta_M)\big) +\frac{\langle\xi^F\rangle_1}{4}\medint\int_0^t\medint\int_\R\medint\int_{U} (s(\nabla\cdot s^t)\cdot\nabla\rho_i)\big(\overline{\kappa}^{\d}_{s,i}\partial_\eta(\varphi_\beta\zeta_M)\big),
 \end{align*}
where as in the above, the choice of $\beta$ and $\d$ guarantees that $\mathbf{1}_{\{\rho_i>0\}}=1$ on the support of $\overline{\kappa}^\d_{s,i}\varphi_\beta$.  The martingale term in \eqref{u_18} is defined by
\begin{align*} & I^{i,\textrm{mart}}_t = \medint\int_0^t\medint\int_\R\medint\int_{U}\big(\overline{\kappa}^{\d}_{s,i}\partial_\eta(\varphi_\beta\zeta_M)\big)\sqrt{\rho_i}\nabla\rho_i\cdot s\dd\xi^F.
\end{align*}
Observe that $I^{i,\textrm{cut}}_t$ and $I^{i,\textrm{mart}}_t$ also depend on $\d,\beta\in(0,1)$ and $M\in\N$.

It remains to treat the mixed term on the righthand side of \eqref{u_5}.  We first observe that, for every $\d\in(0,1)$ and $\eta\in(\d,\infty)$ the local regularity of the solutions \eqref{d_s2} implies that as a function of the spatial variable $\chi^\d_{s,i}\in L^2([0,T];H^1(U))$ with
\begin{equation}\label{u_020}\nabla_x\chi^\d_{s,i} = \overline{\kappa}^\d_{s,i}\nabla\rho_i\;\;\textrm{and}\;\;\partial_\eta\chi^\d_{s,i} = \kappa^\d(-\eta)-\overline{\kappa}^\d_{s,i},\end{equation}
where, due to the choice of $\b\in(0,1)$ and $\d\in (0,\nicefrac{\beta}{2})$, we have that $\kappa^\d(-\eta)\varphi_\beta(\eta)=0$.  It then follows from Ito's formula (see, for example, Krylov \cite{Kry2013}, and see also Section~\ref{sec_Ito}) and \eqref{u_020} that, for every $\beta\in(0,1)$ and $\d\in (0,\nicefrac{\beta}{2})$, 
\begin{equation}\label{u_10}
\medint\int_\R\medint\int_U \chi^{\d}_{s,1}(y,\eta)\chi^{\d}_{s,2}(y,\eta)\varphi_\beta(\eta)\zeta_M(\eta)\rvert_{s=0}^t = I^{\textrm{meas}}_t+I^{\textrm{mix},\textrm{mart}}_t+I^{\textrm{cov}}_t+I^{\textrm{mix},\textrm{cut}}_t,
\end{equation}
for the measure, martingale, covariance, and cutoff terms defined as follows.  The measure term $I^{\textrm{meas}}_t$ is defined by
\begin{align*}
I^{\textrm{meas}}_t & = -\medint\int_0^t\medint\int_\R\medint\int_U \nabla\rho_2\cdot a\nabla\phi(\rho_1)\overline{\kappa}^\d_{s,1}\overline{\kappa}^\d_{s,2}\varphi_\beta\zeta_M-\medint\int_0^t\medint\int_\R\medint\int_U \nabla\rho_1\cdot a\nabla\phi(\rho_2)\overline{\kappa}^\d_{s,1}\overline{\kappa}^\d_{s,2}\varphi_\beta\zeta_M
\\ & \quad +\medint\int_0^t\medint\int_\R\medint\int_U \big(\kappa^\d*q_1\big)\overline{\kappa}^\d_{2,s}+\medint\int_0^t\medint\int_\R\medint\int_U \big(\kappa^\d*q_2\big)\overline{\kappa}^\d_{1,s}.
\end{align*}
The mixed martingale term $I^{\textrm{mix},\textrm{mart}}_t$ is defined by
\begin{align*}
& I^{\textrm{mix},\textrm{mart}}_t  = \medint\int_0^t\medint\int_\R\medint\int_U \varphi_\beta\zeta_M\sqrt{\rho_1}\overline{\kappa}^\d_{s,1}\overline{\kappa}^\d_{s,2}\nabla\rho_2\cdot\dd \xi^F+ \medint\int_0^t\medint\int_\R\medint\int_U \varphi_\beta\zeta_M\sqrt{\rho_2}\overline{\kappa}^\d_{s,2}\overline{\kappa}^\d_{s,1}\nabla\rho_1\cdot\dd \xi^F
\\ & \quad -\medint\int_0^t\medint\int_\R\medint\int_U\varphi_\beta\zeta_M\sqrt{\rho_1}\overline{\kappa}^\d_{s,1}\overline{\kappa}^\d_{s,2}\nabla\rho_1\cdot\dd \xi^F-\medint\int_0^t\medint\int_\R\medint\int_U\varphi_\beta\zeta_M\sqrt{\rho_2}\overline{\kappa}^\d_{s,1}\overline{\kappa}^\d_{s,2}\nabla\rho_2\cdot\dd \xi^F
\\ & \quad +\medint\int_0^t\medint\int_\R\medint\int_U \partial_\eta(\varphi_\beta\zeta_M)\sqrt{\rho_1}\overline{\kappa}^\d_{s,1}\chi^\d_{s,2}\nabla\rho_1\cdot \dd\xi^F+\medint\int_0^t\medint\int_\R\medint\int_U \partial_\eta(\varphi_\beta\zeta_M)\sqrt{\rho_2}\overline{\kappa}^\d_{s,2}\chi^\d_{s,1}\nabla\rho_2\cdot \dd\xi^F.
\end{align*}
The covariation term $I^{\textrm{cov}}_t$ takes the form
\begin{align}\label{u_14}
I^{\textrm{cov}}_t  & = - \frac{\langle\xi^F\rangle_1}{8}\medint\int_0^t\medint\int_\R\medint\int_{U}\Big((\frac{1}{\rho_1}a\nabla\rho_1\cdot\nabla\rho_2)+(\frac{1}{\rho_2}a\nabla\rho_2\cdot\nabla\rho_1\Big)\overline{\kappa}^\d_{s,1}\overline{\kappa}^\d_{s,2}\varphi_\beta\zeta_M
\\ \nonumber & \quad -\frac{\langle \xi^F\rangle_1}{2}\medint\int_0^t\medint\int_\R\medint\int_{U}\Big((s(\nabla\cdot s^t)\cdot \nabla\rho_2)+(s(\nabla\cdot s^t)\cdot\nabla\rho_1)\Big)\overline{\kappa}^\d_{s,1}\overline{\kappa}^\d_{s,2}\varphi_\beta\zeta_M
\\ \nonumber & \quad - \frac{1}{2}\medint\int_0^t\medint\int_\R\medint\int_{U}\Big((\langle\nabla\cdot s\xi^F\rangle_1 \rho_1)+(\langle\nabla\cdot s\xi^F\rangle_1 \rho_2)\Big)\overline{\kappa}^\d_{s,1}\overline{\kappa}^\d_{s,2}\varphi_\beta\zeta_M
\\ \nonumber & \quad +\textstyle\sum_{k=1}^\infty\textstyle\sum_{j=1}^d\medint\int_0^t\medint\int_\R\medint\int_U \Big(\textstyle\sum_{i=1}^d\partial_i(\sqrt{\rho_1}f_ks_{ij})\Big)\Big(\textstyle\sum_{r=1}^d\partial_r(\sqrt{\rho_2}f_ks_{rj})\Big)\overline{\kappa}^\d_{s,1}\overline{\kappa}^\d_{s,2}\varphi_\beta\zeta_M,
\end{align}
where the second term on the righthand side of \eqref{u_14} accounts for both the $x$-derivatives and $\eta$-derivatives of $\chi^\d_{s,i}$ appearing in the final terms of the second and third line of \eqref{sol_eq}.  The $\eta$-derivative of the cutoff term appears in $I^{\textrm{mix},\textrm{cut}}_t$.  It then follows from the structure of the noise in Assumption~\ref{assume_n} that the final term of \eqref{u_14} satisfies
\begin{align}\label{u_033}
&  \textstyle\sum_{k=1}^\infty\textstyle\sum_{j=1}^d\medint\int_0^t\medint\int_\R\medint\int_U \Big(\textstyle\sum_{i=1}^d\partial_i(\sqrt{\rho_1}f_ks_{ij})\Big)\Big(\textstyle\sum_{r=1}^d\partial_r(\sqrt{\rho_2}f_ks_{rj})\Big)\varphi_\beta\zeta_M
\\ \nonumber & = \frac{\langle\xi^F\rangle_1}{4}\medint\int_0^t\medint\int_\R\medint\int_U \frac{1}{\sqrt{\rho_1}\sqrt{\rho_2}} s^t\nabla\rho_1\cdot s^t\nabla\rho_2\overline{\kappa}^\d_{s,1}\overline{\kappa}^\d_{s,2}\varphi_\beta\zeta_M
\\ \nonumber & \quad + \frac{\langle\xi^F\rangle_1}{2}\medint\int_0^t\medint\int_\R\medint\int_U \Big(\frac{\sqrt{\rho_1}}{\sqrt{\rho_2}} s(\nabla\cdot s^t)\nabla\rho_2+\frac{\sqrt{\rho_2}}{\sqrt{\rho_1}} s(\nabla\cdot s^t)\nabla\rho_1\Big)\overline{\kappa}^\d_{s,1}\overline{\kappa}^\d_{s,2}\varphi_\beta\zeta_M
\\ \nonumber & \quad  +\medint\int_0^t\medint\int_\R\medint\int_U \langle \nabla\cdot s\xi^F\rangle_1\sqrt{\rho_1}\sqrt{\rho_2}\overline{\kappa}^\d_{s,1}\overline{\kappa}^\d_{s,2}\varphi_\beta\zeta_M,
\end{align}
and therefore, in view of \eqref{u_14}, \eqref{u_033}, and $a=ss^t$,
\begin{align}\label{u_17}
 I^{\textrm{cov}}_t  & = - \frac{\langle\xi^F\rangle_1}{8}\medint\int_0^t\medint\int_\R\medint\int_{U}\Big(\frac{1}{\sqrt{\rho_1}}-\frac{1}{\sqrt{\rho_2}}\Big)^2\big(s^t\nabla\rho_1\cdot s^t\nabla\rho_2\big)\overline{\kappa}^\d_{s,1}\overline{\kappa}^\d_{s,2}\varphi_\beta\zeta_M
\\ \nonumber &\quad  -\frac{\langle \xi^F\rangle_1}{2}\medint\int_0^t\medint\int_\R\medint\int_{U}\big(1-\frac{\sqrt{\rho_1}}{\sqrt{\rho_2}}\big)\big((s(\nabla\cdot s^t)\cdot \nabla\rho_2\big)\overline{\kappa}^\d_{s,1}\overline{\kappa}^\d_{s,2}\varphi_\beta\zeta_M
\\ \nonumber &\quad  -\frac{\langle\xi^F\rangle_1}{2}\medint\int_0^t\medint\int_\R\medint\int_U \big(1-\frac{\sqrt{\rho_2}}{\sqrt{\rho_1}}\big)\big(s(\nabla\cdot s^t)\cdot\nabla\rho_1)\big)\overline{\kappa}^\d_{s,1}\overline{\kappa}^\d_{s,2}\varphi_\beta\zeta_M
\\ \nonumber &\quad - \frac{1}{2}\medint\int_0^t\medint\int_\R\medint\int_{U}\Big(\sqrt{\rho_1}-\sqrt{\rho_2}\Big)^2\langle\nabla\cdot s\xi^F\rangle_1\overline{\kappa}^\d_{s,1}\overline{\kappa}^\d_{s,2}\varphi_\beta\zeta_M.
\end{align}
Finally, the cutoff term  $I^{\textrm{mix},\textrm{cut}}_t$ is defined by
\begin{align*}
& I^{\textrm{mix},\textrm{cut}}_t =-\medint\int_0^t\medint\int_{\R}\medint\int_{U}\Big((\kappa^{\d}*q_1)\chi^{\d}_{s,2}\partial_\eta(\varphi_\beta\zeta_M)+(\kappa^{\d}*q_2)\chi^{\d}_{s,1}\partial_\eta(\varphi_\beta\zeta_M)\Big)
\\ & \quad +\frac{1}{2}\medint\int_0^t\medint\int_\R\medint\int_{U}\Big((\langle\nabla\cdot s\xi^F\rangle_1 \rho_1\overline{\kappa}^{\d}_{s,1})\chi^{\d}_{s,2}\partial_\eta(\varphi_\beta\zeta_M)+(\langle\nabla\cdot s\xi^F\rangle_1 \rho_2\overline{\kappa}^{\d}_{s,2})\chi^{\d}_{s,1}\partial_\eta(\varphi_\beta\zeta_M)\Big)
\\ & \quad +\frac{\langle\xi^F\rangle_1}{4}\medint\int_0^t\medint\int_\R\medint\int_{U}\Big((s(\nabla\cdot s^t)\cdot\nabla\rho_1\overline{\kappa}^{\d}_{s,1})\chi^{\d}_{s,2}\partial_\eta(\varphi_\beta\zeta_M)+(s(\nabla\cdot s^t)\cdot\nabla\rho_2\overline{\kappa}^{\d}_{s,2})\chi^{\d}_{s,1}\partial_\eta(\varphi_\beta\zeta_M)\Big).
 \end{align*}
Returning to \eqref{u_5}, it follows from \eqref{u_18} and \eqref{u_10} that, for the martingale and cutoff terms defined respectively by the rule $I^{\textrm{mart}}_t= I^{1,\textrm{mart}}_t+I^{2,\textrm{mart}}_t-2I^{\textrm{mix},\textrm{mart}}_t$,
\begin{align}\label{u_40}
& \medint\int_\R\medint\int_{U}\big(\chi^{\d}_{s,1}+\chi^{\d}_{s,2}-2\chi^{\d}_{s,1}\chi^{\d}_{s,2}\big)\varphi_\beta\zeta_M\rvert_{s=0}^t
\\ \nonumber & =-2I^{\textrm{meas}}_t-2I^{\textrm{cov}}_t+I^{\textrm{mart}}_t+I^{\textrm{cut}}_t.
\end{align}
We will handle the four terms on the righthand side of \eqref{u_40} separately.

\textbf{The measure term}.  We first rewrite the measure term in the form
\begin{align}\label{measure_1}
I^{\textrm{meas}}_t & = -\medint\int_0^t\medint\int_\R\medint\int_U (\phi'(\rho_1)^\frac{1}{2}-\phi'(\rho_2)^\frac{1}{2})^2\nabla\rho_2\cdot a\nabla\rho_1\overline{\kappa}^\d_{s,1}\overline{\kappa}^\d_{s,2}\varphi_\beta\zeta_M
\\ \nonumber & \quad -2\medint\int_0^t\phi'(\rho_1)^\frac{1}{2}\phi'(\rho_2)^\frac{1}{2}\medint\int_\R\medint\int_U \nabla\rho_2\cdot a\nabla\rho_1\overline{\kappa}^\d_{s,1}\overline{\kappa}^\d_{s,2}\varphi_\beta\zeta_M
\\ \nonumber & \quad +\medint\int_0^t\medint\int_\R\medint\int_U \big(\kappa^\d*q_1\big)\overline{\kappa}^\d_{2,s}+\medint\int_0^t\medint\int_\R\medint\int_U \big(\kappa^\d*q_2\big)\overline{\kappa}^\d_{1,s}.
\end{align}
The support of the convolution kernels, the definition of the cutoff functions, the boundedness in $\d\in(0,1)$ of $(\d\overline{\kappa}^\d_{s,1})$, the local $\C^{1,\nicefrac{1}{2}}$-regularity of $\phi$, the local regularity of the solutions \eqref{d_s2}, and the dominated convergence theorem prove that the first term on the righthand side satisfies, for some $c\in(0,\infty)$,
\begin{align*}
& \limsup_{\d\rightarrow 0}\abs{\medint\int_0^t\medint\int_\R\medint\int_U (\phi'(\rho_1)^\frac{1}{2}-\phi'(\rho_2)^\frac{1}{2})^2\nabla\rho_2\cdot a\nabla\rho_1\overline{\kappa}^\d_{s,1}\overline{\kappa}^\d_{s,2}\varphi_\beta\zeta_M}
\\ & \leq c\limsup_{\d\rightarrow 0}\abs{\medint\int_0^t\medint\int_\R\medint\int_U\mathbf{1}_{\{0<\abs{\rho_1-\rho_2}<c\d\}}\abs{\nabla\rho_1}\abs{\nabla\rho_2}(\d\overline{\kappa}^\d_{s,1})\overline{\kappa}^\d_{s,2}\varphi_\beta\zeta_M}=0.
\end{align*}
Since the regularity property of the kinetic measures \eqref{d_s3}, H\"older's inequality, and Young's inequality prove that, $\P$-a.s.\ for every $t\in[0,T]$ the final two terms on the righthand side of \eqref{measure_1} are nonnegative, we conclude $\P$-a.s.\ for every $t\in[0,T]$ that
\begin{equation}\label{u_41} \limsup_{\d\rightarrow0}(-2I^{\textrm{meas}}_t)\leq 0.\end{equation}

\textbf{The covariance term}.  It follows from the local Lipschitz continuity of the functions $\sqrt{\eta}$ and $\nicefrac{1}{\sqrt{\eta}}$ on $(0,\infty)$, the definitions of the cutoff functions $\varphi_\beta$ and $\zeta_M$, the fact that $\abs{\rho_1-\rho_2}<2\d$ on the support of the convolution kernels $\overline{\kappa}^\d_{s,1}\overline{\kappa}^\d_{s,2}$, the choice $\d\in(0,\nicefrac{\b}{2})$, the boundedness and $\C^2$-regularity of $s$, and \eqref{u_17} that the covariance term satisfies, $\P$-a.s.\ for every $t\in[0,T]$, for some $c\in(0,\infty)$ depending on $\beta$ and $M$,
\begin{align}\label{u_042}
 \lim_{\ve\rightarrow 0}\abs{I^{\textrm{cov}}_t} & \leq  c\langle\xi^F\rangle_1\d\medint\int_0^t\medint\int_\R\medint\int_U \mathbf{1}_{\{0<\abs{\rho_1-\rho_2}<2\d\}}\abs{\nabla\rho_1}\abs{\nabla\rho_2}(\d\overline{\kappa}^{\d}_{s,1})\overline{\kappa}^{\d}_{s,2}\varphi_\beta\zeta_M
 \\ \nonumber & \quad +c\langle\xi^F\rangle_1\medint\int_0^t\medint\int_\R\medint\int_U \mathbf{1}_{\{0<\abs{\rho_1-\rho_2}<2\d\}} \big(\abs{\nabla\rho_1}+\abs{\nabla\rho_2}\big)(\d\overline{\kappa}^{\d}_{s,1})\overline{\kappa}^{\d}_{s,2}\varphi_\beta\zeta_M
\\ \nonumber & \quad + c \d\medint\int_0^t\medint\int_\R\medint\int_U \mathbf{1}_{\{0<\abs{\rho_1-\rho_2}<c\d\}} \langle \nabla\cdot s\xi^F\rangle_1 (\delta\overline{\kappa}^\d_{s,1})\overline{\kappa}^\d_{s,2}\varphi_\beta\zeta_M.
\end{align}
The boundedness of $(\d\overline{\kappa}^\d_{s,1})$ independently of $\d\in(0,1)$ allows to bound and integrate away the convolution kernels.  It then follows using the local regularity of the solutions \eqref{d_s2} that the first and third terms on the righthand side of \eqref{u_042} are of order $\d$.  The dominated convergence theorem proves that the second term vanishes in the limit $\d\rightarrow 0$.  This completes the analysis of the covariance term, for which we have, $\P$-a.s.\ for every $t\in[0,T]$,
\begin{equation}\label{u_46}\limsup_{\d\rightarrow 0}\abs{2I^{\textrm{cov}}_t}=0.\end{equation}

\textbf{The martingale term}.  The martingale term takes the form
\begin{align}\label{u_50}
I^{\textrm{mart}}_t & =  \medint\int_0^t\medint\int_\R\medint\int_{U}(1-2\chi^\d_{s,2})\big(\overline{\kappa}^{\d}_{s,1}\partial_\eta(\varphi_\beta\zeta_M)\big)\sqrt{\rho_1} s^t\nabla\rho_1\cdot \dd\xi^F
\\ \nonumber & \quad +\medint\int_0^t\medint\int_\R\medint\int_U(1-2\chi^\d_{s,1})\big(\overline{\kappa}^{\d}_{s,2}\partial_\eta(\varphi_\beta\zeta_M)\big)\sqrt{\rho_2} s^t\nabla\rho_2\cdot \dd\xi^F
\\ \nonumber & \quad + \medint\int_0^t\medint\int_\R\medint\int_{U}\overline{\kappa}^\d_{s,1}\overline{\kappa}^\d_{s,2}\varphi_\beta\zeta_M \big(\sqrt{\rho_2}-\sqrt{\rho_1}\big)\nabla\rho_1\cdot s\dd\xi^F
\\ \nonumber & \quad + \medint\int_0^t\medint\int_\R\medint\int_{U}\overline{\kappa}^\d_{s,1}\overline{\kappa}^\d_{s,2}\varphi_\beta\zeta_M \big(\sqrt{\rho_1}-\sqrt{\rho_2}\big)\nabla\rho_2\cdot s\dd\xi^F.
\end{align}
For the first two terms on the righthand side of \eqref{u_50}, an explicit computation proves that pointwise and therefore in $L^p(U\times[0,T])$ for every $p\in[1,\infty)$,
\begin{equation}\label{u_51}\lim_{\d\rightarrow 0}\medint\int_\R\overline{\kappa}^\d_{s,1}(1-2\chi^\d_{s,2})\big)\partial_\eta(\varphi_\beta\zeta_M) =  \sgn(\rho_1-\rho_2)\partial_\eta( \varphi_\beta\zeta_M)(\rho_1).\end{equation}
The final two terms on the righthand side of \eqref{u_50} are treated analogously to the second term on the righthand side of \eqref{u_042}.  Using the local Lipschitz continuity of $\sqrt{\eta}$ on $(0,\infty)$, the definition of the cutoff functions, the fact that $\abs{\rho_1-\rho_2}<2\d$ on the support of the convolution kernels $\overline{\kappa}^\d_{s,1}\overline{\kappa}^\d_{s,2}$, and the Burkh\"older--David--Gundy inequality (see, for example, Revuz and Yor \cite[Chapter~4, Theorem~4.1]{RevYor1999}), after passing to a deterministic subsequence $\d\rightarrow 0$, these terms vanish $\P$-a.s.\ for every $t\in[0,T]$.  We therefore have using \eqref{u_51} and the Burkh\"older--David--Gundy inequality that, after passing to a deterministic subsequence $\d\rightarrow 0$, $\P$-a.s.\ for every $t\in[0,T]$,
\begin{align}\label{u_52}
\lim_{\d\rightarrow 0} I^{\textrm{mart}}_t & =  \medint\int_0^t\medint\int_{U}\sgn(\rho_1-\rho_2)\partial_\eta(\varphi_\beta\zeta_M)(\rho_1)\sqrt{\rho_1}s^t\nabla\rho_1\cdot \dd\xi^F
\\ \nonumber & \quad +\medint\int_0^t\medint\int_U\sgn(\rho_2-\rho_1)\partial_\eta(\varphi_\beta\zeta_M)(\rho_2)\sqrt{\rho_2}s^t\nabla\rho_2\cdot \dd\xi^F.
\end{align}
It then follows from \eqref{u_52}, the definition of the cutoff functions, the Burkh\"older--David--Gundy inequality, H\"older's inequality, the boundedness of $s$, the preservation of mass \eqref{d_mass}, and the structure of the noise in Assumption~\ref{assume_n} that, for some $c\in(0,\infty)$ depending on the measure $\abs{U}$,
\begin{align}\label{u_53}
 \E\big[\max_{t\in[0,T]}\abs{\lim_{\d\rightarrow 0} I^{\textrm{mart}}_t}\big] \leq & c\langle \xi^F\rangle^\frac{1}{2}_1\textstyle\sum_{i=1}^2\Big(\E\big[\big(\medint\int_0^T\medint\int_U\mathbf{1}_{\{\nicefrac{\b}{2}\leq \rho_i\leq \beta\}}\beta^{-1}\abs{s^t\nabla\rho_i}^2\big)^\frac{1}{2}\big]
\\ \nonumber & \quad +c\textstyle\sum_{i=1}^2\E\big[\langle\rho_0\rangle_U^\frac{1}{2}\big(\medint\int_0^T\medint\int_U\mathbf{1}_{\{M\leq \rho_i\leq M+1\}}\abs{s^t\nabla\rho_i}^2\big)^\frac{1}{2}\big].
\end{align}
The regularity of the parabolic defect measure \eqref{d_s3}, the structure $a=ss^t$, and \eqref{u_53} then prove that
\begin{align}\label{u_54}
 \E\big[\max_{t\in[0,T]}\abs{\lim_{\d\rightarrow 0} I^{\textrm{mart}}_t}\big] \leq & c\langle \xi^F\rangle^\frac{1}{2}_1\textstyle\sum_{i=1}^2\Big(\E\big[\big(\beta^{-1}q_i(U\times[0,T]\times[\nicefrac{\b}{2},\b])\big)^\frac{1}{2}\big]
\\ \nonumber & \quad +c\textstyle\sum_{i=1}^2\E\big[\langle\rho_0\rangle_U^\frac{1}{2}\big( q_i(U\times[0,T]\times[M,M+1])\big)^\frac{1}{2}\big].
\end{align}
It is finally a consequence of the vanishing of the measures at infinity \eqref{d_s4}, Proposition~\ref{prop_zero}, \eqref{u_54}, and Fatou's lemma that, $\P$-a.s.\ for every $t\in[0,T]$ along random subsequences $M\rightarrow\infty$ and $\beta\rightarrow 0$,
\begin{equation}\label{u_55} \lim_{\beta\rightarrow 0}\lim_{M\rightarrow\infty}\lim_{\d\rightarrow 0}\abs{I^{\textrm{mart}}_t}=0,\end{equation}
which completes the analysis of the martingale term.

\textbf{The cutoff term.}  The cutoff term takes the form
\begin{align}\label{u_26}
& I^{\textrm{cut}}_t  =  \medint\int_0^t\medint\int_{\R}\medint\int_{U}\Big((\kappa^{\d}*q_1)(2\chi^{\d}_{s,2}-1)+(\kappa^{\d}*q_2)(2\chi^{\d}_{s,1}-1)\Big)\partial_\eta(\varphi_\beta\zeta_M)
\\ \nonumber & +\frac{1}{2}\medint\int_0^t\medint\int_\R\medint\int_{U}\Big((\langle\nabla\cdot s\xi^F\rangle_1 \rho_1\overline{\kappa}^{\d}_{s,1})(1-2\chi^{\d}_{s,2})+(\langle\nabla\cdot s\xi^F\rangle_1 \rho_2\overline{\kappa}^{\d}_{s,2})(1-2\chi^{\d}_{s,1})\Big)\partial_\eta(\varphi_\beta\zeta_M)
\\ \nonumber & +\frac{\langle\xi^F\rangle_1}{4}\medint\int_0^t\medint\int_\R\medint\int_{U}\Big((s(\nabla\cdot s^t)\cdot\nabla\rho_1\overline{\kappa}^{\d}_{s,1})(1-2\chi^{\d}_{s,2})+(s(\nabla\cdot s^t)\cdot\nabla\rho_2\overline{\kappa}^{\d}_{s,2})(1-2\chi^{\d}_{s,1})\Big)\partial_\eta(\varphi_\beta\zeta_M).
 \end{align}
The analysis leading to \eqref{u_51} and \eqref{u_26} and the boundedness of the kinetic functions prove that, $\P$-a.s.\ for every $t\in[0,T]$,
\begin{align}\label{u_27}
& \lim_{\d\rightarrow 0}I^{\textrm{cut}}_t  \leq   \medint\int_0^t\medint\int_{\R}\medint\int_{U}\abs{\partial_\eta(\varphi_\beta\zeta_M)}\big(q_1+q_2\big)
\\ \nonumber & +\frac{1}{2}\medint\int_0^t\medint\int_{U}\langle\nabla\cdot s\xi^F\rangle_1 \sgn(\rho_1-\rho_2) \big(\rho_1\partial_\eta(\varphi_\beta\zeta_M)(\rho_1)-\rho_2\partial_\eta(\varphi_\beta\zeta_M)(\rho_2)\big)
\\ \nonumber & +\frac{\langle\xi^F\rangle_1}{4}\medint\int_0^t\medint\int_{U} \sgn(\rho_1-\rho_2) s(\nabla\cdot s^t)\cdot\big(\partial_\eta(\varphi_\beta\zeta_M)(\rho_1)\nabla\rho_1-\partial_\eta(\varphi_\beta\zeta_M)(\rho_2)\nabla\rho_2\big).
 \end{align}
For the first term on the righthand side of \eqref{u_27}, we have using the definition of the convolution kernels that, $\P$-a.s.\ for every $t\in[0,T]$, 
\begin{align}\label{u_028}
& \medint\int_0^t\medint\int_{\R}\medint\int_{U}\abs{\partial_\eta(\varphi_\beta\zeta_M)}\big(q_1+q_2\big)
\\ \nonumber & \leq c\textstyle\sum_{i=1}^2\big(\beta^{-1}q_i(U\times[0,T]\times[\nicefrac{\beta}{2},\beta])+q_i(U\times[0,T]\times[M,M+1])\big),
\end{align}
which due to the vanishing of the measures at infinity \eqref{d_s4} and Proposition~\ref{prop_zero} vanishes along random subsequences $M\rightarrow\infty$ and $\beta\rightarrow 0$.  For the second term on the righthand side of \eqref{u_27}, we have that, $\P$-a.s.\ for every $t\in[0,T]$, for some $c\in(0,\infty)$ depending on $\langle\nabla\cdot s\xi^F\rangle_1$,
\begin{align}\label{u_029}
& \frac{1}{2}\medint\int_0^t\medint\int_{U}\langle\nabla\cdot s\xi^F\rangle_1 \sgn(\rho_1-\rho_2) \big(\rho_1\partial_\eta(\varphi_\beta\zeta_M)(\rho_1)-\rho_2\partial_\eta(\varphi_\beta\zeta_M)(\rho_2)\big)
\\ \nonumber & \leq c\textstyle\sum_{i=1}^2 \medint\int_0^T\medint\int_U \big(\mathbf{1}_{\{\nicefrac{\beta}{2}\leq\rho_i\leq\beta\}}+(M+1)\mathbf{1}_{\{M\leq \rho_i\leq M+1\}}\big),
\end{align}
which vanishes along subsequences $M\rightarrow\infty$ and $\beta\rightarrow 0$ using Lemma~\ref{u_lem} for the limit $M\rightarrow\infty$ and the dominated convergence theorem for the limit $\beta\rightarrow 0$.  For the final term on the righthand side of \eqref{u_27}, since $\varphi_\beta=1$ on the support of $\partial_\eta\zeta_M$ and $\zeta_M=1$ on the support of $\partial_\eta\varphi_\beta$, we have that
\begin{align}\label{u_30}
& \frac{\langle\xi^F\rangle_1}{4}\medint\int_0^t\medint\int_{U} \sgn(\rho_1-\rho_2) s(\nabla\cdot s^t)\cdot\big(\partial_\eta(\varphi_\beta\zeta_M)(\rho_1)\nabla\rho_1-\partial_\eta(\varphi_\beta\zeta_M)(\rho_2)\nabla\rho_2\big)
\\ \nonumber & =\frac{\langle\xi^F\rangle_1}{4}\medint\int_0^t\medint\int_{U} \sgn(\rho_1-\rho_2) s(\nabla\cdot s^t)\cdot\big(\partial_\eta(\zeta_M)(\rho_1)\nabla\rho_1-\partial_\eta(\zeta_M)(\rho_2)\nabla\rho_2\big)
\\ \nonumber & \quad +\frac{\langle\xi^F\rangle_1}{4}\medint\int_0^t\medint\int_{U} \sgn(\rho_1-\rho_2) s(\nabla\cdot s^t)\cdot\big(\partial_\eta(\varphi_\beta)(\rho_1)\nabla\rho_1-\partial_\eta(\varphi_\beta)(\rho_2)\nabla\rho_2\big).
\end{align}
For the first term on the righthand side of \eqref{u_30} it is a consequence of the $\C^1$-boundedness of $s$, the regularity of the kinetic measures \eqref{d_s3}, the vanishing of the kinetic measures at infinity \eqref{d_s4}, and H\"older's inequality that, $\P$-a.s.\ for every $t\in[0,T]$, for some $c\in(0,\infty)$,
\begin{align}\label{u_31}
& \liminf_{M\rightarrow\infty} \frac{\langle\xi^F\rangle_1}{4}\medint\int_0^t\medint\int_{U} \sgn(\rho_1-\rho_2) s(\nabla\cdot s^t)\cdot\big(\partial_\eta(\zeta_M)(\rho_1)\nabla\rho_1-\partial_\eta(\zeta_M)(\rho_2)\nabla\rho_2\big)
\\ \nonumber & \leq c\liminf_{M\rightarrow\infty} \textstyle\sum_{i=1}^2 q_i(U\times[0,T]\times[M,M+1])^\frac{1}{2} =0.
\end{align}
For the second term on the righthand side of \eqref{u_30}, we first observe from the definition of $\varphi_\beta$ and the local regularity of the solutions \eqref{d_s2} that $\varphi_\beta(\rho_i)$ has a trace on $\partial U$.  It is then a consequence of the $\C^2$-regularity of $s$ that, after integrating by parts, $\P$-a.s.\ for every $t\in[0,T]$, for $\nu$ the outer normal to $U$,
\begin{align}\label{u_32}
& \frac{\langle\xi^F\rangle_1}{4}\medint\int_0^t\medint\int_{U} \sgn(\rho_1-\rho_2) s(\nabla\cdot s^t)\cdot\big(\partial_\eta(\varphi_\beta)(\rho_1)\nabla\rho_1-\partial_\eta(\varphi_\beta)(\rho_2)\nabla\rho_2\big)
\\ \nonumber &  = \medint\int_U s(\nabla\cdot s^t)\cdot \nabla \Big(\medint\int_0^t\abs{\varphi_\beta(\rho_1)-\varphi_\beta(\rho_2)}\Big)
\\ \nonumber & = \frac{\langle\xi^F\rangle_1}{4}\medint\int_{\partial U}\Big(\medint\int_0^t \abs{\varphi_\beta(\rho_1)-\varphi_\beta(\rho_2)}\Big)\big(s(\nabla\cdot s^t)\cdot\nu\big)-\medint\int_0^t\medint\int_U \abs{\varphi_\beta(\rho_1)-\varphi_\beta(\rho_2)}\nabla\cdot\big(s(\nabla\cdot s^t)).
\end{align}
Since by definition $\varphi_\beta(\rho_i)\rightarrow\mathbf{1}_{\{\rho_i>0\}}$ as $\beta\rightarrow 0$, it is a consequence of the non-vanishing of the solutions \eqref{s_log} and Remark~\ref{remark_sol} that the righthand side of \eqref{u_32} vanishes in the limit $\beta\rightarrow 0$.  We therefore conclude using \eqref{u_028}, \eqref{u_029}, \eqref{u_30}, \eqref{u_31}, and \eqref{u_32} that, along random subsequences $M\rightarrow\infty$ and $\beta\rightarrow 0$, $\P$-a.s.\ for every $t\in[0,T]$,
\begin{equation}\label{u_33}\lim_{\beta\rightarrow 0}\lim_{M\rightarrow\infty}\lim_{\d\rightarrow 0} I^{\textrm{cut}}_t=0,\end{equation}
which completes the analysis of the cutoff term.

\textbf{The conclusion}. Estimates \eqref{u_40}, \eqref{u_41}, \eqref{u_46}, \eqref{u_55}, and \eqref{u_33} prove that, $\P$-a.s.\ for every $t\in[0,T]$, there exist random subsequences $\d,\beta\rightarrow 0$ and $M\rightarrow\infty$ such that
\begin{align*}
& \medint\int_\R\medint\int_{U}\abs{\chi_{s,1}-\chi_{s,2}}^2\rvert_{s=0}^{s=t}
\\ & =\lim_{\beta\rightarrow 0}\lim_{M\rightarrow\infty}\lim_{\d\rightarrow 0} \medint\int_\R\medint\int_{U}\big(\chi^{\d}_{s,1}+\chi^{\d}_{s,2}-2\chi^{\d}_{s,1}\chi^{\d}_{s,2}\big)\varphi_\beta\zeta_M\rvert_{s=0}^t
\\ \nonumber & = \lim_{\beta\rightarrow 0}\lim_{M\rightarrow\infty}\lim_{\d\rightarrow 0}\big(-2I^{\textrm{meas}}_t-2I^{\textrm{cov}}_t+I^{\textrm{mart}}_t+I^{\textrm{cut}}_t\big) \leq 0.
\end{align*}
We therefore have that, for every $t\in[0,T]$,
\begin{align*}
\medint\int_{U}\abs{\rho_1(\cdot,t)-\rho_2(\cdot,t)} & =\medint\int_\R\medint\int_{U}\abs{\chi_{t,1}-\chi_{t,2}}^2\leq \medint\int_\R\medint\int_{U}\abs{\overline{\chi}(\rho_{1,0})-\overline{\chi}(\rho_{2,0})}^2=\medint\int_{U}\abs{\rho_{1,0}-\rho_{2,0}},
\end{align*}
which completes the proof.  \end{proof}

\begin{remark}\label{remark_unique_1}We remark that, under Assumptions~\ref{assume_d} and \ref{assume_n}, Theorem~\ref{thm_unique} applies without changes to the equation
\[\partial_t\rho = \nabla\cdot a\nabla\phi(\rho)-\nabla\cdot (\phi^{\nicefrac{1}{2}}(\rho)\circ s\dd\xi),\]
and to more general noise terms $(\sigma(\rho)\circ s\dd\xi)$ for coefficients $\sigma$ satisfying \cite[Assumption~4.1]{FG21}.\end{remark}

\subsection{Existence of stochastic kinetic solutions}\label{sec_rkss_exist}

In this section, we will first establish the existence of solutions to the regularized equation
\begin{equation}\label{e_1}\partial_t\rho = \nabla\cdot a\nabla\phi(\rho)- \nabla\cdot(\sigma(\rho)s\dd\xi^F)+\frac{\langle\xi^F\rangle_1}{2}\nabla\cdot(\sigma'(\rho)^2a\nabla\rho)+\frac{\langle\xi^F\rangle_1}{2}\nabla\cdot(\sigma(\rho)\sigma'(\rho)s(\nabla\cdot s^t)),\end{equation}
for smooth and bounded approximation $\sigma$ of the square root and for nonnegative initial data $\rho_0\in L^2(\O;L^2(U))$.

\begin{definition}\label{sol_smooth}  Under Assumptions~\ref{assume_d} and \ref{assume_n}, for a smooth and bounded $\sigma\in\C^2(\R)$, and for a nonnegative, $\F_0$-measurable $\rho_0\in L^2(\O;L^2(U))$, a solution of \eqref{e_1} is a nonnegative, $\F_t$-predictable, continuous $L^2(U)$-valued process $\rho\in L^2(\O;L^2([0,T];H^1(U)))$ that satisfies, for every $t\in[0,T]$ and $\psi\in\C^\infty_c(U)$,
\begin{align*}
\medint\int_{U}\rho(x,s)\psi(x)\Big|_{s=0}^{s=t} & =-\medint\int_0^t\medint\int_{U}\nabla\psi\cdot a\nabla\phi(\rho)+\medint\int_0^t\medint\int_{U}\sigma(\rho)\nabla\psi\cdot s\dd\xi^F
\\ & \quad -\frac{\langle\xi^F\rangle_1}{2}\medint\int_0^t\medint\int_{U}\sigma'(\rho)^2a\nabla\rho\cdot\nabla\psi-\frac{\langle\xi^F\rangle_1}{2}\medint\int_0^t\medint\int_{U}\sigma(\rho)\sigma'(\rho)s(\nabla\cdot s^t)\cdot\nabla\psi.
\end{align*}
\end{definition}

\begin{prop}\label{prop_entropy}  Under Assumptions~\ref{assume_d} and \ref{assume_n}, for a smooth and bounded $\sigma\in\C^2(\R)$, and for a nonnegative, $\F_0$-measurable $\rho_0\in L^2(\O;L^2(U))$, there exists a unique solution of \eqref{e_1} in the sense of Definition~\ref{sol_smooth}.  \end{prop}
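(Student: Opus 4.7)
The plan is to prove Proposition~\ref{prop_entropy} by Galerkin approximation, exploiting the fact that under Assumption~\ref{assume_d} and the hypothesis of a smooth bounded $\sigma\in\C^2(\R)$, every nonlinearity in \eqref{e_1} is Lipschitz and the principal part is uniformly elliptic.  Let $\{e_j\}_{j\in\N}$ be an $L^2(U)$-orthonormal basis of Neumann--Laplacian eigenfunctions, $V_n=\text{span}\{e_1,\ldots,e_n\}$, and $P_n$ the associated $L^2(U)$-projection.  The projection of \eqref{e_1} onto $V_n$ with initial condition $P_n\rho_0$ is a finite-dimensional It\^o SDE whose coefficients are globally Lipschitz in the Euclidean norm on $V_n$, so classical SDE theory supplies a unique strong solution $\rho_n$.

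The key a priori estimate comes from applying It\^o's formula to $\tfrac{1}{2}\norm{\rho_n(\cdot,t)}_{L^2(U)}^2$.  The deterministic drift contributes the dissipation $-\int_U a\phi'(\rho_n)\nabla\rho_n\cdot\nabla\rho_n\leq-\tilde{\lambda}\lambda\int_U\abs{\nabla\rho_n}^2$, together with the negative term $-\tfrac{\langle\xi^F\rangle_1}{2}\int_U (\sigma'(\rho_n))^2 \abs{s^t\nabla\rho_n}^2$ and the lower-order term involving $\sigma(\rho_n)\sigma'(\rho_n)s(\nabla\cdot s^t)\cdot\nabla\rho_n$.  The It\^o correction of the stochastic integral equals $\tfrac{1}{2}\sum_k \int_U(\nabla\cdot(\sigma(\rho_n)sf_k))^2$, and expanding the square produces \emph{exactly} the positive counterpart $+\tfrac{\langle\xi^F\rangle_1}{2}\int_U (\sigma'(\rho_n))^2\abs{s^t\nabla\rho_n}^2$, together with cross and zero-order terms controlled by $\sup_{x\in U}\langle\nabla\cdot s\xi^F\rangle_1<\infty$ and the boundedness of $\sigma,\sigma',\sigma\sigma'$.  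This Stratonovich-to-It\^o cancellation, the Burkh\"older--Davis--Gundy inequality, and Gronwall's inequality then yield
\[\E\bigl[\max_{t\in[0,T]}\norm{\rho_n(\cdot,t)}_{L^2(U)}^2+\medint\int_0^T\norm{\nabla\rho_n(\cdot,t)}_{L^2(U)}^2\dt\bigr]\leq c\bigl(1+\E\norm{\rho_0}_{L^2(U)}^2\bigr)\]
uniformly in $n$.

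To pass to the limit, I would read $\partial_t\rho_n$ from the equation as the sum of an $L^2([0,T];H^{-1}(U))$-element and a stochastic integral, and derive via BDG a uniform $L^2(\O;\C^{0,\beta}([0,T];H^{-1}(U)))$ bound for some $\beta<\nicefrac{1}{2}$.  Combined with the $L^2(\O;L^2([0,T];H^1(U)))$ bound, Aubin--Lions then yields tightness of the laws of $(\rho_n)$ in $L^2([0,T];L^2(U))$; a Skorokhod/Jakubowski representation plus the martingale characterization of the noise produce a limiting solution.  The Lipschitz continuity and boundedness of $\sigma,\sigma',\phi$ guarantee that every nonlinear term in Definition~\ref{sol_smooth} passes to the limit under strong $L^2$-convergence.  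Nonnegativity is inherited from $\rho_0\geq 0$ by applying It\^o's formula to a smooth approximation of $((\rho\wedge 0))^2$ and using $\phi(0)=0$ together with the convention $\sigma(0)=0$ built into the regularization, so that each term on the righthand side vanishes on $\{\rho\leq 0\}$.

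Pathwise uniqueness is obtained analogously:  for two solutions $\rho_1,\rho_2$ with identical initial data, the same It\^o cancellation applied to $\norm{\rho_1-\rho_2}_{L^2(U)}^2$ together with the Lipschitz continuity of all coefficients leaves an inequality of the form $\E\norm{\rho_1-\rho_2}_{L^2(U)}^2(t)\leq c\medint\int_0^t\E\norm{\rho_1-\rho_2}_{L^2(U)}^2$, so Gronwall forces $\rho_1=\rho_2$, and Yamada--Watanabe upgrades the weak solution to a pathwise unique strong solution.  The principal difficulty lies in the clean bookkeeping of the It\^o correction $\tfrac{1}{2}\sum_k\int_U(\nabla\cdot(\sigma(\rho_n)sf_k))^2$: expanding the divergence produces, besides the cancelling $(\sigma'(\rho_n))^2\abs{s^t\nabla\rho_n}^2$ contribution, cross terms proportional to $\sigma(\rho_n)\sigma'(\rho_n)s(\nabla\cdot s^t)\cdot\nabla\rho_n$ and $\sigma(\rho_n)\sigma'(\rho_n)s^t\nabla f_k\cdot s^t\nabla\rho_n$, which must be handled via $\sup_{x\in U}\langle\nabla\cdot s\xi^F\rangle_1<\infty$, the $\C^2$-regularity of $s$, and the boundedness of $\sigma,\sigma\sigma'$, and for which the exchange of $\sum_k$ with integration relies crucially on the summability in Assumption~\ref{assume_n}.
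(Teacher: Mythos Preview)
Your proposal is correct and follows essentially the same route as the paper's proof, which is deliberately terse: Galerkin approximation, the $L^2$-energy estimate (the $p=1$ case of Proposition~\ref{prop_e}) holding uniformly along the approximates, and compactness via Aubin--Lions--Simon to pass to the limit. The only minor deviation is your uniqueness argument: you use an $L^2$-Gronwall estimate on the difference, which is perfectly adequate here since $\sigma,\sigma',\phi'$ are bounded and Lipschitz, whereas the paper alludes to a simplified version of the kinetic $L^1$-contraction of Theorem~\ref{thm_unique}; both are valid in this smooth setting.
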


\begin{proof}  The proof is a simplified version of Theorems~\ref{thm_unique} and \ref{thm_rks_ex} based on a standard Galerkin approximation, the estimates of Proposition~\ref{prop_e}, which hold uniformly for the Galerkin approximates, and the Aubin--Lions--Simon lemma \cite{Aubin,pLions,Simon}. \end{proof}

We will now establish estimates for the solutions of \eqref{e_1} with a smooth coefficient $\sigma$.  Notice, in particular, that these estimates require $\sigma$ and its derivative to be bounded by those of the square root, although the coefficient $2$ is completely arbitrary and is not essential.  Then, in Proposition~\ref{prop_measure}, we establish the local regularity of the solutions in the sense of \eqref{d_s2} of Definition~\ref{sol_def}.

\begin{prop}\label{prop_e}   Under Assumptions~\ref{assume_d} and \ref{assume_n}, for a smooth and bounded $\sigma\in\C^2(\R)$  that satisfies
\begin{equation}\label{assume_sigma}\sigma(\eta)\leq 2\sqrt{\eta}\;\;\textrm{and}\;\;\sigma'(\eta)\leq \frac{1}{\sqrt{\eta}}\;\;\textrm{for every $\eta\in(0,\infty)$,}\end{equation}
and for a nonnegative, $\F_0$-measurable $\rho_0\in L^2(\O;L^2(U))$, let $\rho$ be a solution of \eqref{e_1} in the sense of Definition~\ref{sol_smooth} with initial data $\rho_0$.  Then, $\rho$ satisfies the following four estimates.

\begin{enumerate}[(i)]
\item \emph{Preservation of mass}:  $\P$-a.s.\ for every $t\in[0,T]$,
\begin{equation}\label{e_010}\norm{\rho(\cdot,t)}_{L^1(U)}= \norm{\rho_0}_{L^1(U)}.\end{equation}
\item \emph{The $L^p$-energy estimate}:  for every $p\in[1,\infty)$, for some $c\in(0,\infty)$ depending on $p$ and $T$,
\begin{align}\label{e_10}
& \max_{t\in[0,T]}\E[\medint\int_U\rho^{p+1}]+\E[\medint\int_0^t\medint\int_{U}\rho^{p-1}\abs{\nabla\rho}^2]
\\ \nonumber & \leq c\Big(\E[\medint\int_U\rho_0^{p+1}]+\norm{s(\nabla\cdot s^t)}_{L^\infty(U)}^{p+1}\langle\xi^F\rangle_1^{p+1}+\norm{\langle \nabla\cdot s\xi^F\rangle_1}_{L^\infty(U)}^{p+1}\Big).
\end{align}
\item \emph{The $L^2$-energy estimate}:  for every $q\in[1,\infty)$, for some $c\in(0,\infty)$ independent of $T$ but depending on $q$,
\begin{align}\label{e_011}
& \E\Big[\Big(\max_{t\in[0,T]}\medint\int_{U}\rho^2+\medint\int_0^T\medint\int_{U}\nabla\rho\cdot a\nabla\rho\Big)^q\Big] \leq c\Big(\E\Big[\Big(\medint\int_U\rho_0^2\Big)^q\Big]+\langle\xi^F\rangle^q_1\E[\langle\rho_0\rangle^q_U]\Big)
\\ \nonumber & \quad +c T^q \Big(\langle\xi^F\rangle_1^{2q}\norm{s(\nabla\cdot s^t)}^{2q}_{L^\infty(U)}+\norm{\langle\nabla\cdot s\xi^F\rangle_1}^{q}_{L^\infty(U)}\E[\langle\rho_0\rangle^q_U]\Big).
\end{align}
\item \emph{The entropy estimate}:  for $\Theta$ defined by $\Theta(1)=0$ and $\Theta'(\eta)=\frac{\sigma(\eta)\sigma'(\eta)}{\eta}$, for $\Psi(\eta)=\eta\log(\eta)-\eta$, and for every $q\in[1,\infty)$ there exists $c\in(0,\infty)$ independent of $T\in(0,\infty)$ but depending on $q$ such that
\begin{align}\label{e_11}
& \E \Big[\Big(\max_{t\in[0,T]}\medint\int_{U}\Psi(\rho)+\medint\int_0^T\medint\int_{U}\frac{1}{\rho}\nabla\rho\cdot a\nabla\rho\Big)^q\Big]\leq \E\Big[\Big(\medint\int_{U}\Psi(\rho_0)\Big)^q\Big]+c T^q\norm{\langle \nabla\cdot s\xi^F\rangle_1}^q_{L^\infty(U)}
\\ \nonumber & \quad +c\langle\xi^F\rangle^q_1\Big(1+\Big(\medint\int_0^T\medint\int_{\partial U} \abs{s(\nabla\cdot s^t)\cdot \nu}\abs{\Theta(\rho)}\Big)^q+\Big(\medint\int_0^T\medint\int_{U}\abs{\nabla\cdot (s(\nabla\cdot s^t))}\abs{\Theta(\rho)}\Big)^q\Big).
\end{align}
\end{enumerate}

\end{prop}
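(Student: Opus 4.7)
All four estimates will be obtained by applying Itô's formula to a suitable convex function of $\rho$, justified through the Galerkin regularization already used to produce the solution in Proposition~\ref{prop_entropy}. The central structural observation is that the two drift corrections in \eqref{e_1} are designed so that, after writing out the Itô quadratic variation of the stochastic flux $\nabla\cdot(\sigma(\rho)s\,\dxi^F)$ via Assumption~\ref{assume_n}, the top-order $\sigma'(\rho)^2|s^t\nabla\rho|^2$ piece is exactly cancelled by the first drift correction, while the remaining mixed and zero-order pieces are controlled through \eqref{assume_sigma}, Young's inequality, and the uniform ellipticity of $a$.

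\textbf{Mass and $L^{p+1}$-estimates.} For \eqref{e_010} I would test against $\psi_n\in\C^\infty(\overline U)$ converging to $\mathbf{1}_U$; every flux on the right is a divergence and the no-flux condition is encoded in the weak formulation of Definition~\ref{sol_smooth}, so all boundary terms vanish and mass is preserved. For \eqref{e_10}, applying Itô to $\eta\mapsto\eta^{p+1}/(p+1)$ produces (after integration by parts in the parabolic and drift-correction terms) the good term $-p\tilde\lambda\lambda\int\rho^{p-1}|\nabla\rho|^2$, the two drift-correction contributions $-\frac{p\langle\xi^F\rangle_1}{2}\int\rho^{p-1}\sigma'(\rho)^2\nabla\rho\cdot a\nabla\rho$ and $-\frac{p\langle\xi^F\rangle_1}{2}\int\rho^{p-1}\sigma(\rho)\sigma'(\rho)\nabla\rho\cdot s(\nabla\cdot s^t)$, a martingale, and the Itô correction $\frac{p}{2}\int\rho^{p-1}\sum_{j,k}[\sigma'(\rho)f_k(s^t\nabla\rho)_j+\sigma(\rho)\sum_i\partial_i(s_{ij}f_k)]^2$. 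Expanding the square and using $\sum_k f_k^2=\langle\xi^F\rangle_1$ shows that the $\sigma'(\rho)^2$-piece cancels exactly the first drift correction, while the $\sigma(\rho)\sigma'(\rho)$-piece combines with the second drift correction into a lower-order gradient-linear term that, by $\sigma\sigma'\leq 2$ from \eqref{assume_sigma} and Young's inequality, is absorbed into the parabolic drift up to a bound $c\int\rho^{p+1}+c(\norm{s(\nabla\cdot s^t)}^{p+1}_{L^\infty}\langle\xi^F\rangle_1^{p+1}+\norm{\langle\nabla\cdot s\xi^F\rangle_1}^{p+1}_{L^\infty})$. BDG on the martingale and Gronwall in $t\mapsto\E\int\rho^{p+1}$ then close \eqref{e_10}.

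\textbf{$L^2$-energy and entropy.} The $L^2$-bound \eqref{e_011} is the $p=1$ case raised to the $q$-th power: BDG in $L^{2q}(\Omega)$ bounds the martingale, Young's inequality absorbs a small multiple of $\max_t\int\rho^2+\int\int\nabla\rho\cdot a\nabla\rho$ into the left, and the $T$-independence of the leading term on the right is achieved by bounding the zero-order forcings by $\langle\rho_0\rangle_U$ via (i) rather than by $\int\rho^2$. For the entropy bound \eqref{e_11} I would apply Itô to the regularization $\Psi_\varepsilon(\eta)=(\eta+\varepsilon)\log(\eta+\varepsilon)-(\eta+\varepsilon)$ and pass $\varepsilon\to 0$ at the end. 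The parabolic term contributes $\int\phi'(\rho)\rho^{-1}\nabla\rho\cdot a\nabla\rho$, producing the second term on the left of \eqref{e_11} in the limit. The cancellation scheme above for \eqref{e_10} repeats at the level of $\Psi'_\varepsilon(\rho)=\log(\rho+\varepsilon)$: the $\sigma'(\rho)^2$-piece of the Itô correction cancels the first drift correction against $\rho^{-1}$, and the $\sigma(\rho)\sigma'(\rho)$-piece combined with the second drift correction becomes, after one integration by parts using $\Theta'(\eta)=\sigma(\eta)\sigma'(\eta)/\eta$, the expression $\int s(\nabla\cdot s^t)\cdot\nabla\Theta(\rho)$; a further integration by parts in $\Theta(\rho)$ yields exactly the boundary and interior $\Theta$-terms on the right of \eqref{e_11}. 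The residual zero-order Itô contribution is bounded by $\norm{\langle\nabla\cdot s\xi^F\rangle_1}_{L^\infty}$ times $T$; BDG raised to the $q$-th moment and Young's inequality finish the bound.

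\textbf{Main obstacle.} The principal difficulty is the bookkeeping required to execute the cancellation of the second-order Itô and Stratonovich-correction terms, which must be done explicitly using the structure of $\nabla\cdot(\sigma(\rho)sf_k)$ under Assumption~\ref{assume_n} and the sharp inequalities in \eqref{assume_sigma}. Secondary, and specific to \eqref{e_11}, is the $\varepsilon\to 0$ limit and the two integrations by parts that yield the $\Theta$-terms: these use the $\C^2$-regularity of $s$, the boundedness of $\sigma\in\C^2$ so that $\Theta$ is Lipschitz on bounded intervals, the $L^{p+1}$-estimate \eqref{e_10} to ensure integrability of $\rho\log(\rho)$ and of the $\Theta$-contributions, and the $H^1$-regularity of $\rho$ from Proposition~\ref{prop_entropy} to make sense of the trace of $\Theta(\rho)$ on $\partial U$.
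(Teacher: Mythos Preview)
Your proposal is correct and follows essentially the same route as the paper: It\^o's formula applied to $\rho^{p+1}/(p(p+1))$ for \eqref{e_10}, its $p=1$ specialization raised to the $q$-th moment with BDG for \eqref{e_011}, and a $\log(\rho+\delta)$-type regularization of the entropy with the $\Theta$-integration-by-parts for \eqref{e_11}, all resting on the Stratonovich-to-It\^o cancellation you describe. One small remark: for \eqref{e_10} as stated the estimate is $\max_t\E[\cdot]$, not $\E[\max_t(\cdot)]$, so the paper simply takes expectation (killing the martingale) before Gr\"onwall and does not invoke BDG there; your use of BDG is harmless but unnecessary for that part.
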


\begin{proof}  The preservation of mass \eqref{e_010} is an immediate consequence of the nonnegativity and choosing $\psi=1$ in Definition~\ref{sol_smooth}.  The energy estimate \eqref{e_10} is a consequence of It\^o's formula (see, for example, \cite{Kry2013}) applied to the function $\frac{\rho^{p+1}}{p(p+1)}$, for which we have $\P$-a.s.\ for every $t\in[0,T]$ that
\begin{align}\label{e_100}
& \medint\int_{U}\frac{\rho^{p+1}}{p(p+1)}\Big|_{s=0}^{s=t}+\medint\int_0^t\medint\int_{U}\rho^{p-1}\nabla\rho\cdot a\nabla\phi(\rho)=\medint\int_0^t\medint\int_{U}\sigma(\rho)\rho^{p-1}\nabla\rho\cdot s\dd\xi^F
\\ \nonumber & \quad +\frac{\langle\xi^F\rangle_1}{4}\medint\int_0^t\medint\int_{U}\sigma(\rho)\sigma'(\rho)\rho^{p-1}s(\nabla\cdot s^t)\cdot\nabla\rho+\frac{1}{2}\medint\int_0^t\medint\int_{U}\sigma(\rho)^2\rho^{p-1}\langle \nabla\cdot s\xi^F\rangle_1.
\end{align}
It is a consequence of the bounds on $\sigma$, the uniform ellipticity of $a$, the nondegeneracy of $\phi$ in Assumption~\ref{assume_d}, H\"older's inequality, and Young's inequality that, for some $c\in(0,\infty)$ depending on the ellipticity constants but independent of $p$, for every $t\in[0,T]$,
\begin{align*}
& \max_{s\in[0,t]}\big(\frac{1}{p(p+1)}\E[\rho^{p+1}]\big)+\E[\medint\int_0^t\medint\int_{U}\rho^{p-1}\abs{\nabla\rho}^2]\leq \frac{1}{p(p+1)}\E[\rho_0^{p+1}]
\\ & +\frac{c}{p+1}\big(\norm{s(\nabla\cdot s^t)}_{L^\infty(U)}^{p+1}\langle\xi^F\rangle_1^{p+1}+\norm{\langle \nabla\cdot s\xi^F\rangle_1}_{L^\infty(U)}^{p+1}\big)+\frac{c p}{p+1}\medint\int_0^t\medint\int_{U}\rho^{p+1}.
\end{align*}
It then follows from Gr\"onwall's inequality that, for some $c\in(0,\infty)$ depending on $T$ and $p$,
\begin{align*}
& \max_{t\in[0,T]}\E[\medint\int_U\rho^{p+1}]+\E[\medint\int_0^t\medint\int_{U}\rho^{p-1}\abs{\nabla\rho}^2]
\\ & \leq c\Big(\E[\medint\int_U\rho_0^{p+1}]+\norm{s(\nabla\cdot s^t)}_{L^\infty(U)}^{p+1}\langle\xi^F\rangle_1^{p+1}+\norm{\langle \nabla\cdot s\xi^F\rangle_1}_{L^\infty(U)}^{p+1}\Big),
\end{align*}
which completes the proof of the energy estimate.

We prove \eqref{e_011} by improving the above estimate in the case of $p=1$, which allows for a straightforward treatment of the stochastic integral.  Based on \eqref{e_100} in the case $p=1$, we have $\P$-a.s.\ for every $t\in[0,T]$ that
\begin{align*}
& \frac{1}{2}\medint\int_{U}\rho^2\Big|_{s=0}^{s=t}+\medint\int_0^t\medint\int_{U}\nabla\rho\cdot a\nabla\phi(\rho)=\medint\int_0^t\medint\int_{U}\sigma(\rho)\nabla\rho\cdot s\dd\xi^F
\\ & \quad +\frac{\langle\xi^F\rangle_1}{4}\medint\int_0^t\medint\int_{U}\sigma(\rho)\sigma'(\rho)s(\nabla\cdot s^t)\cdot\nabla\rho+\frac{1}{2}\medint\int_0^t\medint\int_{U}\sigma(\rho)^2\langle \nabla\cdot s\xi^F\rangle_1.
\end{align*}
It then follows from the bounds \eqref{assume_sigma} on $\sigma$, the uniform ellipticity of $a$ in Assumption~\ref{assume_d}, the preservation of mass \eqref{e_010}, H\"older's inequality, and Young's inequality that
\begin{align*}
& \max_{t\in[0,T]}\medint\int_{U}\rho^2+\medint\int_0^T\medint\int_{U}\nabla\rho\cdot a\nabla\phi(\rho) \leq c\Big(\medint\int_U\rho_0^2+\max_{t\in[0,T]}\abs{\medint\int_0^t\medint\int_{U}\sigma(\rho)\nabla\rho\cdot s\dd\xi^F}\Big)
\\ & \quad +c T \Big(\langle\xi^F\rangle_1^2\norm{s(\nabla\cdot s^t)}^2_{L^\infty(U)}+\langle\rho_0\rangle_U\norm{\langle\nabla\cdot s\xi^F\rangle_1}_{L^\infty(U)}\Big),
\end{align*}
and, therefore, for every $q\geq 1$ there exists $c\in(0,\infty)$ depending on $q$ such that
\begin{align*}
& \Big(\max_{t\in[0,T]}\medint\int_{U}\rho^2+\medint\int_0^T\medint\int_{U}\nabla\rho\cdot a\nabla\phi(\rho)\Big)^q \leq c\Big(\big(\medint\int_U\rho_0^2\big)^q+\max_{t\in[0,T]}\abs{\medint\int_0^t\medint\int_{U}\sigma(\rho)\nabla\rho\cdot s\dd\xi^F}^q\Big)
\\ & \quad +c T^q \Big(\langle\xi^F\rangle_1^{2q}\norm{s(\nabla\cdot s^t)}^{2q}_{L^\infty(U)}+\langle\rho_0\rangle^q_U\norm{\langle\nabla\cdot s\xi^F\rangle_1}^q_{L^\infty(U)}\Big).
\end{align*}
For the stochastic integral, we have using the Burkh\"older--Davis--Gundy inequality and the uniform ellipticity of $a$ in Assumption~\ref{assume_d} that, for every $\ve\in(0,1)$, for some $c\in(0,\infty)$ independent of $\ve$,
\begin{align*}
\E\big[\max_{t\in[0,T]}\abs{\medint\int_0^t\medint\int_{U}\sigma(\rho)\nabla\rho\cdot s\dd\xi^F}^q\big] & \leq c \E\big[\big(\medint\int_0^T \textstyle\sum_{k=1}^\infty\textstyle\sum_{i=1}^d\big(\medint\int_U \sigma(\rho)(s^t\nabla\rho)_if_k\big)^2\big)^\frac{q}{2}\big]
\\ & \leq c\langle\xi^F\rangle_1^\frac{q}{2}\E\big[\big(\langle \rho_0\rangle_1^\frac{q}{2}\big(\medint\int_0^T\medint\int_U \nabla\rho\cdot a\nabla\rho\big)^\frac{q}{2}\big]
\\ & \leq c \ve^{-1}\langle\xi^F\rangle^q_1\E[\langle\rho_0\rangle^q_U]+\ve\E\big[\big(\medint\int_0^T\medint\int_U\nabla\rho\cdot a\nabla\rho\big)^q\big].
\end{align*}
By choosing $\ve\in(0,1)$ sufficiently small, we have, for some $c\in(0,\infty)$ independent of $T$,
\begin{align*}
& \E\Big[\Big(\max_{t\in[0,T]}\medint\int_{U}\rho^2+\medint\int_0^T\medint\int_{U}\nabla\rho\cdot a\nabla\phi(\rho)\Big)^q\Big] \leq c\Big(\E\Big[\Big(\medint\int_U\rho_0^2\Big)^q\Big]+\langle\xi^F\rangle^q_1\E[\langle\rho_0\rangle^q_U]\Big)
\\ & \quad +c T^q \Big(\langle\xi^F\rangle_1^{2q}\norm{s(\nabla\cdot s^t)}^{2q}_{L^\infty(U)}+\norm{\langle\nabla\cdot s\xi^F\rangle_1}^{q}_{L^\infty(U)}\E[\langle\rho_0\rangle^q_U]\Big),
\end{align*}
which completes the proof of \eqref{e_011} using the nondegeneracy of $\phi$ in Assumption~\ref{assume_d}.

The entropy estimate \eqref{e_11} is a consequence of It\^o's formula (see, for example, \cite{Kry2013}) applied to the function $\Psi_\d$ defined by the properties that $\Psi_\d(0)=0$ and $\Psi_\d'(\eta)=\log(\eta+\d)$, for $\d\in(0,1)$, which is justified by approximation using the nonnegativity of the solution.  This yields that, $\P$-a.s.\ for every $t\in[0,T]$,
\begin{align}\label{e_15}
& \medint\int_{U}\Psi_\d(\rho)\Big|_{s=0}^{s=t}+\medint\int_0^t\medint\int_{U}\frac{1}{\rho}\nabla\rho\cdot a\nabla\phi(\rho)=\medint\int_0^t\medint\int_{U}\frac{\sigma(\rho)}{\rho+\d}\nabla\rho\cdot s\dd\xi^F
\\ \nonumber & \quad +\frac{\langle\xi^F\rangle_1}{4}\medint\int_0^t\medint\int_{U}\sigma(\rho)\sigma'(\rho)s(\nabla\cdot s^t)\cdot\big(\frac{1}{\rho+\d}\nabla\rho\big)+\frac{1}{2}\medint\int_0^t\medint\int_{U}\frac{\sigma(\rho)^2}{\rho+\d}\langle \nabla\cdot s\xi^F\rangle_1.
\end{align}
Let $\Theta_\d$ be the unique function satisfying $\Theta_\d(1-\d)=0$ and $\Theta_\d'(\eta)=\frac{\sigma(\eta)\sigma'(\eta)}{\eta+\d}$ for every $\eta\in(0,\infty)$.  It follows from the bounds on $\sigma$ that, for $c\in(0,\infty)$ independent of $\d\in(0,1)$, for every $\eta\in(0,\infty)$,
\[\abs{\Theta_\d(\eta)}\leq c\abs{\log(\eta+\d)}.\]
We therefore have using the bounds on $\sigma$, after integrating by parts in the penultimate term of \eqref{e_15}, for the outer unit normal $\nu$ to $\partial U$, for every $q\in[1,\infty)$, and for some $c\in(0,\infty)$ depending on $q$,
\begin{align}\label{e_16}
&  \Big(\max_{t\in[0,T]}\medint\int_{U}\Psi_\d(\rho)+\medint\int_0^T\medint\int_{U}\frac{1}{\rho+\d}\nabla\rho\cdot a\nabla\phi(\rho)\Big)^q
\\ \nonumber & \leq c\Big(\big(\medint\int_{U}\Psi_\d(\rho_0)\big)^q+\max_{t\in[0,T]}\abs{\medint\int_0^t\medint\int_{U}\frac{\sigma(\rho)}{\rho+\d}\nabla\rho\cdot s\dd\xi^F}^q+\langle\xi^F\rangle^q_1\big(\medint\int_0^T\medint\int_{\partial U} \abs{ s(\nabla\cdot s^t)\cdot \nu}\abs{\Theta_\d(\rho)}\big)^q\Big)
\\  \nonumber & \quad +c\Big(\langle\xi^F\rangle^q_1\big(\medint\int_0^T\medint\int_{U}\abs{\nabla\cdot (s(\nabla\cdot s^t))}\abs{\Theta_\d(\rho)}\big)^q+T^q\norm{\langle\nabla\cdot s\xi^F\rangle_1}_{L^\infty(U)}^q\Big),
\end{align}
where the $H^1$-regularity of the solution guarantees the existence of the trace of $\Theta_\d(\rho)$ on $\partial U$.  It remains to treat the stochastic integral in \eqref{e_16}, for which we have using the Burkh\"older--Davis--Gundy inequality, H\"older's inequality, and the structure of the diffusion coefficient $a=ss^t$ in Assumption~\ref{assume_d} that, for some $c\in(0,\infty)$,
\begin{align*}
& \E\big[\max_{t\in[0,T]}\abs{\medint\int_0^t\medint\int_{U}\frac{\sigma(\rho)}{\rho+\d}\nabla\rho\cdot s\dd\xi^F}^q\big] \leq c \E\big[\big(\medint\int_0^T\textstyle\sum_{k=1}^\infty\textstyle\sum_{i=1}^d\big(\medint\int_{U}\frac{\sigma(\rho)}{\rho+\d}(s^t\nabla\rho)_i f_k\big)^2\big)^\frac{q}{2}\big]
\\ & \leq c\langle \xi^F\rangle_1^\frac{q}{2}\E\big[\big(\medint\int_0^T\medint\int_{U}\frac{\sigma(\rho)^2}{(\rho+\d)^2}\abs{s\nabla\rho}^2\big)^\frac{q}{2}\big] \leq c\langle \xi^F\rangle_1^\frac{q}{2}\E\big[\big(\medint\int_0^T\medint\int_{U}\frac{1}{\rho+\d}\nabla\rho\cdot a \nabla\rho\big)^\frac{q}{2}\big].
\end{align*}
Returning to \eqref{e_16}, it is a consequence of Young's inequality, the uniform ellipticity of the coefficients, and the monotone convergence theorem that, after passing $\d\rightarrow 0$,
\begin{align*}
& \E \Big[\Big(\max_{t\in[0,T]}\medint\int_{U}\Psi(\rho)+\medint\int_0^T\medint\int_{U}\frac{1}{\rho}\nabla\rho\cdot a\nabla\phi(\rho)\Big)^q\Big]\leq \E\Big[\Big(\medint\int_{U}\Psi(\rho_0)\Big)^q\Big]+c T^q\norm{\langle \nabla\cdot s\xi^F\rangle_1}^q_{L^\infty(U)}
\\ & \quad +c\langle\xi^F\rangle^q_1\Big(1+\Big(\medint\int_0^T\medint\int_{\partial U} \abs{s(\nabla\cdot s^t)\cdot \nu}\abs{\Theta(\rho)}\Big)^q+\Big(\medint\int_0^T\medint\int_{U}\abs{\nabla\cdot (s(\nabla\cdot s^t))}\abs{\Theta(\rho)}\Big)^q\Big),
\end{align*}
for the smooth, bounded function $\Theta'(\eta)=\frac{\sigma(\eta)\sigma'(\eta)}{\eta}$ and for the entropy $\Psi(\eta)=\eta\log(\eta)-\eta$, which completes the proof using the nondegeneracy of $\phi$ in Assumption~\ref{assume_d}. \end{proof}

\begin{prop}\label{prop_measure} Under Assumptions~\ref{assume_d} and \ref{assume_n}, for a smooth and bounded $\sigma\in\C^2(\R)$  that satisfies \eqref{assume_sigma}, and for a nonnegative, $\F_0$-measurable $\rho_0\in L^2(\O;L^2(U))$, let $\rho$ be a solution of \eqref{e_1} in the sense of Definition~\ref{sol_smooth} with initial data $\rho_0$.  Then, for $\Theta_M$ satisfying $\Theta_M(0)=0$ and $\Theta_M'(\eta)=\sigma(\eta)\sigma'(\eta)\mathbf{1}_{\{M\leq \eta\leq M+1\}}$ for every $\eta\in(0,\infty)$,
\begin{align*}
& \E\big[\medint\int_0^T\medint\int_{U}\mathbf{1}_{\{M\leq \rho\leq M+1\}}\big(\nabla\rho\cdot a \nabla\rho\big)\leq \E\big[\medint\int_{U}(\rho_0-M)_+\big]
\\ & \quad +c\E\big[\norm{\langle \nabla\cdot s\xi^F\rangle_1}_{L^\infty(U)}\medint\int_0^T\medint\int_{U} (M+1)\mathbf{1}_{\{M\leq \rho\leq M+1\}}\big]
\\ & \quad +c\langle\xi^F\rangle_1\E\Big[\medint\int_0^T\medint\int_{\partial U}\abs{s(\nabla\cdot s^t)\cdot \nu}\abs{\Theta_M(\rho)}+\medint\int_0^T\medint\int_{U}\abs{\nabla\cdot(s(\nabla s^t))}\Theta_M(\rho)\Big].
\end{align*}
\end{prop}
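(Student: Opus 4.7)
The plan is to apply It\^o's formula to the functional $F(\rho)=\int_U\Psi_M(\rho)\dx$, where $\Psi_M$ is obtained by smooth approximation of the function with $\Psi_M(0)=\Psi_M'(0)=0$ and $\Psi_M''(\eta)=\mathbf{1}_{[M,M+1]}(\eta)$; the limit in the approximation is justified by the $H^1$-regularity of $\rho$ implied by \eqref{e_011} and dominated convergence.  After integrating by parts in the three divergence drift terms of \eqref{e_1} (the Neumann structure produces no boundary contribution) and expanding the It\^o correction $\frac{1}{2}\sum_{k,i}\int\Psi_M''(\rho)[\partial_j(\sigma(\rho)f_ks_{ji})]^2\dx$ from the square-root noise, I arrive at an identity for $F(\rho(t))-F(\rho_0)$ whose martingale part has zero expectation after truncation by $\tau_R=\inf\{t\colon\norm{\rho(\cdot,t)}_{L^2(U)}\geq R\}$, with the energy estimate \eqref{e_011} and monotone convergence used to remove the truncation as $R\rightarrow\infty$.

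The crucial cancellations come from the expansion $\partial_j(\sigma(\rho)f_ks_{ji})=f_k\sigma'(\rho)(s^t\nabla\rho)_i+\sigma(\rho)(s^t\nabla f_k)_i+\sigma(\rho)f_k(\nabla\cdot s^t)_i$.  The spatial constancy of $\langle\xi^F\rangle_1=\sum_k f_k^2$ in Assumption~\ref{assume_n} gives $\sum_k f_k\nabla f_k=\tfrac{1}{2}\nabla\langle\xi^F\rangle_1=0$, eliminating the two cross terms that feature $\sum_k f_k(s^t\nabla f_k)$.  The three surviving contributions from the It\^o correction are $\tfrac{\langle\xi^F\rangle_1}{2}\int\Psi_M''\sigma'(\rho)^2\nabla\rho\cdot a\nabla\rho$, $\langle\xi^F\rangle_1\int\Psi_M''\sigma(\rho)\sigma'(\rho)s(\nabla\cdot s^t)\cdot\nabla\rho$, and $\tfrac{1}{2}\int\Psi_M''\sigma(\rho)^2\langle\nabla\cdot s\xi^F\rangle_1$.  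The first exactly cancels the drift $-\tfrac{\langle\xi^F\rangle_1}{2}\int\Psi_M''\sigma'(\rho)^2\nabla\rho\cdot a\nabla\rho$ from the $\sigma'^2$ Stratonovich correction in \eqref{e_1}; the second combines with the drift $-\tfrac{\langle\xi^F\rangle_1}{2}\int\Psi_M''\sigma\sigma' s(\nabla\cdot s^t)\cdot\nabla\rho$ to leave a net $+\tfrac{\langle\xi^F\rangle_1}{2}\int\Psi_M''\sigma\sigma' s(\nabla\cdot s^t)\cdot\nabla\rho$.  Only the diffusion drift $-\int\Psi_M''(\rho)\phi'(\rho)\nabla\rho\cdot a\nabla\rho$ remains to control the left-hand side, which it does via $\phi'(\rho)\geq\tilde{\lambda}$.

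To close the estimate, I bound $\Psi_M(\rho_0)\leq(\rho_0-M)_+$ from the explicit form of $\Psi_M$, use the identity $\Psi_M''(\rho)\sigma(\rho)\sigma'(\rho)\nabla\rho=\Theta_M'(\rho)\nabla\rho=\nabla\Theta_M(\rho)$ and integrate by parts to convert the surviving $\sigma\sigma'$ term into the boundary and interior integrals against $\abs{\Theta_M(\rho)}$ appearing in the claim, and apply $\sigma(\rho)^2\leq 4\rho\leq 4(M+1)$ from \eqref{assume_sigma} on the support of $\Psi_M''(\rho)$ to produce the $\norm{\langle\nabla\cdot s\xi^F\rangle_1}_{L^\infty(U)}$ term.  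The main obstacle is the precise It\^o accounting: recognizing that $\sum_k f_k\nabla f_k=0$ eliminates both cross terms, and that the $\sigma'(\rho)^2$ contributions cancel exactly between Stratonovich drift and It\^o correction.  A secondary subtlety is that $\Theta_M$ is only Lipschitz at $\eta\in\{M,M+1\}$, but the chain rule still yields $\Theta_M(\rho)\in L^\infty$ with $\nabla\Theta_M(\rho)\in L^2$, so the $H^1$-trace on $\partial U$ is well-defined and the integration by parts is justified.
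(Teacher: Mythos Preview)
Your argument is correct and follows essentially the same line as the paper's proof: both apply It\^o's formula to $\int_U\Psi_M(\rho)$ with $\Psi_M''=\mathbf{1}_{[M,M+1]}$, arrive at the same identity (the paper writes it down directly, citing ``the methods of Proposition~\ref{prop_e}''), and then use $\Psi_M(\rho_0)\leq(\rho_0-M)_+$, the nondegeneracy $\phi'\geq\tilde\lambda$, the bound $\sigma(\rho)^2\leq c(M+1)$ on the support of $\Psi_M''$, and an integration by parts via $\nabla\Theta_M(\rho)=\Psi_M''(\rho)\sigma(\rho)\sigma'(\rho)\nabla\rho$ to reach the claim. Your explicit accounting of the It\^o correction---in particular, the use of $\sum_k f_k\nabla f_k=\tfrac12\nabla\langle\xi^F\rangle_1=0$ to kill the cross terms and the exact cancellation of the $\sigma'(\rho)^2$ contributions---is more detailed than what the paper spells out, but the resulting identity and the closing steps are the same.
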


\begin{proof}  Let $\Psi_M$ satisfy $\Psi_M'(0)=\Psi_M(0)=0$ and $\Psi''_M(\eta) = \mathbf{1}_{\{M\leq \eta\leq M+1\}}$.  It then follows by an approximation argument and It\^o's formula (see, for example, \cite{Kry2013}) using the methods of Proposition~\ref{prop_e} that, $\P$-a.s., 
\begin{align*}
& \E\big[\medint\int_{U}\Psi_M(\rho(x,T))+\medint\int_0^T\medint\int_{U}\mathbf{1}_{\{M\leq \rho\leq M+1\}}\big(\nabla\rho\cdot a\nabla\phi(\rho)\big)\big]=\E\big[\medint\int_{U}\Psi_M(\rho_0)\big]
\\ & +\E\big[\frac{1}{2}\medint\int_0^T\medint\int_{U}\langle \nabla\cdot s\xi^F\rangle_1 \sigma(\rho)^2 \mathbf{1}_{\{M\leq \rho\leq M+1\}}+\frac{\langle\xi^F\rangle_1}{2}\medint\int_0^T\medint\int_{U}\sigma(\rho)\sigma'(\rho)\mathbf{1}_{\{M\leq \rho\leq M+1\}} s(\nabla\cdot s^t)\cdot\nabla\rho\big].
\end{align*}
Then, using the bounds \eqref{assume_sigma} on $\sigma$ we have that, for some $c\in(0,\infty)$ independent of $\sigma$,
\[\abs{\Theta_M(\eta)}\leq c\mathbf{1}_{\{\eta\geq M\}}\;\;\textrm{and}\;\;\sigma(\eta)^2\leq c\eta.\]
Therefore, since by definition $\Psi_M$ satisfies $0\leq \Psi_M(\eta) \leq (\eta-M)_+$, we have using the $\C^2$-regularity of $s$, the nondegeneracy of $\phi$ in Assumption~\ref{assume_d}, the $H^1(U)$-regularity of $\rho$, and the integration by parts formula that, for the outer unit normal $\nu$ to $\partial U$, for some $c\in(0,\infty)$ independent of $M$,
\begin{align*}
& \E\big[\medint\int_0^T\medint\int_{U}\mathbf{1}_{\{M\leq \rho\leq M+1\}}\abs{\nabla\rho}^2\big]\leq \E\big[\medint\int_{U}(\rho_0-M)_+\big]
\\ & \quad +c\E\big[\norm{\langle \nabla\cdot s\xi^F\rangle_1}_{L^\infty(U)}\medint\int_0^T\medint\int_{U} (M+1)\mathbf{1}_{\{M\leq \rho\leq M+1\}}\big]
\\ & \quad +c\langle\xi^F\rangle_1\E\Big[\medint\int_0^T\medint\int_{\partial U}\abs{s(\nabla\cdot s^t)\cdot \nu}\abs{\Theta_M(\rho)}+\medint\int_0^T\medint\int_{U}\abs{\nabla\cdot(s(\nabla s^t))}\abs{\Theta_M(\rho)}\Big],
\end{align*}
which completes the proof.  \end{proof}

The previous Propositions~\ref{prop_e} and \ref{prop_measure} establish the positive spatial regularity of the solutions in $H^1$-based spaces.  We will now estimate the regularity of the solution in $H^{-1}(U)$ in Proposition~\ref{prop_h1}.  Here for $\sigma$ approximating the square root we have that $\Sigma(\rho)$ defined in Proposition~\ref{prop_h1} approximates $\log(\rho)$, and therefore using the preservation of mass it follows from the second line of the estimate in Proposition~\ref{prop_h1} that, $\P$-a.s.\ on the event $\rho_0\neq 0$, the logarithm of the solution will be integrable.  Here, for large values, we estimate $\log(\rho\vee 1)\leq (\rho\vee 1)$ using the $L^1$-norm.

\begin{prop}\label{prop_h1} Under Assumptions~\ref{assume_d} and \ref{assume_n}, for a smooth and bounded $\sigma\in\C^2(\R)$  that satisfies \eqref{assume_sigma}, and for a nonnegative, $\F_0$-measurable $\rho_0\in L^2(\O;L^2(U))$, let $\rho$ be a solution of \eqref{e_1} in the sense of Definition~\ref{sol_smooth} with initial data $\rho_0$.  Then, for $\Sigma$ satisfying $\Sigma(1)=0$ and $\Sigma'(\eta)=\sigma'(\eta)^2$, for $\langle \rho_0\rangle_{U}=\fint_{U}\rho_0$.
\begin{align*}
& \E\big[\max_{t\in[0,T]}\norm{(\rho-\langle\rho_0\rangle_{U})}^2_{H^{-1}(U)}\big]+\E\big[\medint\int_0^T\medint\int_{U}\rho^2\big]
\\ & +\langle\xi^F\rangle_1\E\big[\medint\int_0^T\medint\int_{U}\rho\Sigma(\rho)\big]+\langle\xi^F\rangle_1\E\big[\langle\rho_0\rangle_{U}\medint\int_0^T\medint\int_{U}\abs{\Sigma(\rho\wedge 1)}\big]
\\ & \leq c\Big(\norm{(\rho_0-\langle\rho_0\rangle_{U})}^2_{H^{-1}(U)}+T\E\big[\langle\rho_0\rangle_U^2\big]+T\norm{s(\nabla\cdot s^t)}^2_{L^\infty(U)}\langle\xi^F\rangle_1^2\Big)
 \\ & \quad +c\Big((1+T)\langle\xi^F\rangle_1\E\big[\langle\rho_0\rangle_U\big]+\langle\xi^F\rangle_1\E\big[\langle\rho_0\rangle_{U}\medint\int_0^T\medint\int_{U}\Sigma(\rho\vee 1)\big]\Big).
\end{align*}
\end{prop}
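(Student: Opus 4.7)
The plan is to apply a Hilbert-space It\^o formula to $F(\rho)=\norm{\rho-\langle\rho_0\rangle_U}^2_{H^{-1}(U)}$ and extract cancellations from the no-flux Neumann structure. Preservation of mass \eqref{e_010} keeps $\rho(\cdot,t)-\langle\rho_0\rangle_U$ mean-zero, so the Neumann inverse Laplacian is well-defined on the relevant subspace; for cleaner algebra I will work with the $a$-weighted variant, defining $u(\cdot,t)$ by $-\nabla\cdot(a\nabla u)=\rho-\langle\rho_0\rangle_U$ with $\medint\int_U u=0$ and $a\nabla u\cdot\nu=0$ on $\p U$, yielding $\medint\int_U\nabla u\cdot a\nabla u$ equivalent to the standard $H^{-1}$-norm by uniform ellipticity.

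Testing $2u\in H^1(U)$ against \eqref{e_1} in the weak form of Definition~\ref{sol_smooth} (legitimate because the no-flux BC extends the formulation from $\C^\infty_c(U)$ to $H^1(U)$ test functions, and the $L^2([0,T];H^1(U))$-regularity of Proposition~\ref{prop_e} justifies It\^o), and using $\sigma'(\rho)^2a\nabla\rho=a\nabla\Sigma(\rho)$ together with the defining equation for $u$, the drift contribution to $dF$ reduces to
\[-2\medint\int_U(\rho-\langle\rho_0\rangle_U)\phi(\rho)-\langle\xi^F\rangle_1\medint\int_U(\rho-\langle\rho_0\rangle_U)\Sigma(\rho)-\langle\xi^F\rangle_1\medint\int_U\nabla u\cdot\sigma(\rho)\sigma'(\rho)s(\nabla\cdot s^t).\]
Since $\phi$ and $\Sigma$ are nondecreasing and $\rho-\langle\rho_0\rangle_U$ has mean zero, the first two integrals are nonnegative. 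The first furnishes the $L^2$-bound via $\phi(\eta)\geq\tilde\lambda\eta$ and preservation of mass, at the cost of the $T\E[\langle\rho_0\rangle_U^2]$ remainder. The $\Sigma$-integral is decomposed using $\Sigma(1)=0$, $\Sigma\leq 0$ on $[0,1]$, $\Sigma\geq 0$ on $[1,\infty)$ as
\[\medint\int_U(\rho-\langle\rho_0\rangle_U)\Sigma(\rho)=\medint\int_U\rho\,\Sigma(\rho)+\langle\rho_0\rangle_U\medint\int_U\abs{\Sigma(\rho\wedge 1)}-\langle\rho_0\rangle_U\medint\int_U\Sigma(\rho\vee 1),\]
whose first two pieces are exactly the $\Sigma$-contributions on the LHS of the claim and the third the last remainder on the RHS. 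The term linear in $\nabla u$ is bounded using $\sigma(\rho)\sigma'(\rho)\leq 2$ from \eqref{assume_sigma}, Cauchy--Schwarz, and Young's inequality by $\ve\norm{\nabla u}^2_{L^2}+c_\ve\langle\xi^F\rangle_1^2\norm{s(\nabla\cdot s^t)}^2_{L^\infty}$, with the $\ve$-piece absorbed after integration in time.

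The It\^o correction contributes $\textstyle\sum_{k,j}\norm{\nabla\cdot(\sigma(\rho)s_{\cdot j}f_k)}^2_{H^{-1}(U)}\,dt$, controlled by the divergence-to-$H^{-1}$ estimate, the bound $\sigma(\eta)\leq 2\sqrt\eta$, and preservation of mass as
\[\textstyle\sum_{k,j}\norm{\nabla\cdot(\sigma(\rho)s_{\cdot j}f_k)}^2_{H^{-1}(U)}\leq c\langle\xi^F\rangle_1\medint\int_U\sigma(\rho)^2\leq c\langle\xi^F\rangle_1\abs{U}\langle\rho_0\rangle_U,\]
which produces the $T\langle\xi^F\rangle_1\E[\langle\rho_0\rangle_U]$ RHS contribution. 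The martingale is controlled by Burkh\"older--Davis--Gundy: its quadratic variation is bounded by the same type of quantity times $\max_t\norm{\nabla u}^2_{L^2}$, so BDG and Young's inequality allow me to absorb $\tfrac{1}{2}\E[\max_t\norm{\nabla u}^2_{L^2}]$ into the LHS and leave a remainder of the same type.

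The main obstacle is the careful bookkeeping of boundary terms: under Neumann no-flux the individual fluxes in \eqref{e_1} need not vanish on $\p U$ separately, and one must verify that the boundary contributions arising from the various integrations by parts cancel across the drift and noise when they are combined in the weak formulation. The same issue underlies the divergence-to-$H^{-1}$ estimate used for the quadratic variation, which requires a vanishing normal trace of the stochastic flux, implicit in the Neumann structure via tangency of $s$ to $\p U$. Once these points are settled, combining the pieces and absorbing the $\ve$-terms into the LHS produces the claimed estimate.
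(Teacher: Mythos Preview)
Your proposal is correct and follows essentially the same route as the paper: define the $a$-weighted Neumann inverse $u$ (the paper calls it $z$), apply It\^o's formula to $\medint\int_U u(\rho-\langle\rho_0\rangle_U)$, and estimate the five resulting terms exactly as you describe---the paper's treatment of the $\phi$-, $\Sigma$-, $s(\nabla\cdot s^t)$-, martingale-, and correction-terms matches yours line by line, with the It\^o correction written via the Neumann eigenbasis $\{e_j\}$ as $\textstyle\sum_{j,k}\big(\medint\int_U\sigma(\rho)f_k\,\lambda_j^{-1/2}s^t\nabla e_j\big)^2$ rather than an abstract divergence-to-$H^{-1}$ bound.

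One correction to your closing paragraph: the boundary bookkeeping is simpler than you suggest, and no tangency of $s$ to $\partial U$ is assumed or needed. Once the weak formulation is extended from $\C^\infty_c(U)$ to $H^1(U)$ via the no-flux condition (as you already note), the drift and noise act on any $\psi\in H^1(U)$ through $-\medint\int_U\nabla\psi\cdot(\text{flux})$ with no boundary term; the only remaining integration by parts is the passage from $-\medint\int_U\nabla u\cdot a\nabla\phi(\rho)$ to $-\medint\int_U(\rho-\langle\rho_0\rangle_U)\phi(\rho)$ (and likewise for $\Sigma$), and that uses only $a\nabla u\cdot\nu=0$, which you built into $u$. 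The divergence-to-$H^{-1}$ estimate for the correction is just duality, $\big|\medint\int_U V\cdot\nabla\psi\big|\leq\norm{V}_{L^2}\norm{\psi}_{H^1}$, and requires nothing of $V\cdot\nu$.
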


\begin{proof}  Let $H^1_a(U)$ denote the Sobolev space of mean zero $H^1(U)$-functions equipped with the positive definite inner product $\langle f,g\rangle_{H^1_a(U)} = \int_U\nabla f\cdot a\nabla g$ and let $z$ be the unique mean zero solution of the equation $-\nabla\cdot (a\nabla z)= \rho-\langle\rho_0\rangle_{U}$ on $U\times[0,T]$ with Neumann boundary conditions $(a\nabla z\cdot \nu)=0$ on $\partial U$ for the outer unit normal $\nu$ to $\partial U$, where here we are using the fact that the solution preserves mass to define $z$.  It then follows from It\^o's formula (see, for example, \cite{Kry2013}) and the stochastic product rule that, $\P$-a.s.\ for every $t\in[0,T]$, for the smooth function $\Sigma$ satisfying $\Sigma(1)=0$ and $\Sigma'(\eta)=\sigma'(\eta)^2$,
\begin{align}\label{h1_1}
\medint\int_{U} z(\rho-\langle\rho_0\rangle)\Big|_{s=0}^{s=t} & =-2\medint\int_0^t\medint\int_{U}\big(\rho-\langle\rho_0\rangle_{U}\big)\phi(\rho)-\langle\xi^F\rangle_1\medint\int_0^t\medint\int_{U}\big(\rho-\langle\rho_0\rangle_{U}\big)\Sigma(\rho)
\\ \nonumber & \quad -\langle\xi^F\rangle_1\medint\int_0^t\medint\int_{U}\sigma(\rho)\sigma'(\rho) s(\nabla\cdot s^t)\cdot\nabla z+2\medint\int_0^t\medint\int_{U}\sigma(\rho)\nabla z\cdot s\dd\xi^F
\\ \nonumber & \quad +\medint\int_0^t\textstyle\sum_{j,k=1}^\infty \abs{\medint\int_U \frac{s^t\nabla e_j}{\sqrt{\lambda_j}}f_k\sigma(\rho)}^2,
\end{align}
for $\{e_j\}_{j\in\N}$ an orthonormal $L^2(U)$-basis and orthogonal $H^1_a(U)$-basis of eigenfunctions $-\nabla\cdot a\nabla e_j = \lambda_je_j$ with Neumann boundary conditions $\big(a\nabla e_j\cdot \nu\big)=0$ on $\partial U$, as follows similarly to the computation in Appendix~\ref{sec_Ito} using the definition of $z$.  For the first term on the righthand side of \eqref{h1_1}, we have using the preservation of mass in Definition~\ref{sol_def} and the nondegeneracy of $\phi$ in Assumption~\ref{assume_d}  that, for some $c\in(0,\infty)$,
\begin{equation}\label{h1_2} -\medint\int_0^t\medint\int_{U}\big(\rho-\langle\rho_0\rangle_{U}\big)\phi(\rho) \leq  -c\medint\int_0^t\medint\int_{U}\rho^2+ct\langle\rho_0\rangle_U^2.\end{equation}
For the second term on the righthand side of \eqref{h1_1}, we have that
\begin{equation}\label{h1_3} -\langle\xi^F\rangle_1\medint\int_0^t\medint\int_{U}\big(\rho-\langle\rho_0\rangle_{U}\big)\Sigma(\rho) = -\langle\xi^F\rangle_1\medint\int_0^t\medint\int_{U}\rho\Sigma(\rho)+\langle\xi^F\rangle_1\langle\rho_0\rangle_{U}\medint\int_0^t\medint\int_{U}\Sigma(\rho).\end{equation}
For the third term on the righthand side of \eqref{h1_1}, we have using the bounds on $\sigma$, the $\C^1$-boundedness of $s$ in Assumption~\ref{assume_d}, the energy estimate associated to the equation satisfied by $z$, and H\"older's inequality that, for some $c\in(0,\infty)$,
\begin{align}\label{h1_4}
& \abs{\langle\xi^F\rangle_1\medint\int_0^t\medint\int_{U}\sigma(\rho)\sigma'(\rho)s(\nabla\cdot s^t)\cdot\nabla z}
\\ \nonumber & \leq ct^\frac{1}{2}\norm{s(\nabla\cdot s^t)}_{L^\infty(U)}\langle\xi^F\rangle_1\big(\medint\int_0^t\medint\int_{U}\abs{\nabla z}^2\big)^\frac{1}{2}
\\ \nonumber & \leq ct^\frac{1}{2}\norm{s(\nabla\cdot s^t)}_{L^\infty(U)}\langle \xi^F\rangle_1\big(\medint\int_0^t\medint\int_{U}(\rho-\langle\rho_0\rangle_{U})^2\big)^\frac{1}{2}.
\end{align}
For the stochastic integral, we have using the boundedness of $s$ in Assumption~\ref{assume_d}, the structure of the noise in Assumption~\ref{assume_n}, the preservation of mass in Definition~\ref{sol_def}, the Burkh\"older--Davis--Gundy inequality, H\"older's inequality, and the energy estimate satisfied by $z$ that, for some $c\in(0,\infty)$,
\begin{align}\label{h1_5}
& \E\big[\max_{t\in[0,T]}\abs{\medint\int_0^t\medint\int_{U}\sigma(\rho)\nabla z \cdot s\dd\xi^F}\big] \leq c\E\big[\big(\medint\int_0^T\textstyle\sum_{j=1}^d\textstyle\sum_{k=1}^\infty\big(\medint\int_{U}\textstyle\sum_{i=1}^d\sigma(\rho)f_k\partial_i z s_{ij}\big)^2\big)^\frac{1}{2}\big]
\\ \nonumber & \leq c\langle\xi^F\rangle^\frac{1}{2}_1 \E\big[\langle\rho_0\rangle^\frac{1}{2}_{U}\medint\int_0^T\medint\int_{U}\abs{s}^2\abs{\nabla z}^2\big]^\frac{1}{2} \leq c\langle\xi^F\rangle^\frac{1}{2}_1\E\big[\langle\rho_0\rangle^\frac{1}{2}_{U}\medint\int_0^T\medint\int_{U}(\rho-\langle\rho_0\rangle_U)^2\big]^\frac{1}{2}.
\end{align}
For the final term on the righthand side of \eqref{h1_1}, since the $\sqrt{\lambda_j}^{-1}s^t\nabla e_j$ form a orthonormal $L^2(U)^d$-system, it follows from the preservation of mass in Definition~\ref{sol_def} and the bounds \eqref{assume_sigma} on $\sigma$ that, for some $c\in(0,\infty)$ depending on $d$, for every $t\in[0,T]$,
\begin{align}\label{h1_6}
& \medint\int_0^t\textstyle\sum_{j,k=1}^\infty \abs{\medint\int_U \frac{s^t\nabla e_j}{\sqrt{\lambda_j}}f_k\sigma(\rho)}^2  \leq d\medint\int_0^t\textstyle\sum_{k=1}^\infty \medint\int_U f_k^2\sigma(\rho)^2\leq c\langle\xi^F\rangle_1\medint\int_0^t\medint\int_{U}\rho\leq cT\langle\xi^F\rangle_1\langle\rho_0\rangle_{U}.
\end{align}
Returning to \eqref{h1_1}, it follows from the inequality
\[\int_0^t\int_U(\rho-\langle\rho_0\rangle_U)^2 = \int_0^t\int_U\rho^2-t\abs{U}\langle\rho_0\rangle_U^2\leq \int_0^t\int_U\rho^2,\]
and from Young's inequality, \eqref{h1_2}, \eqref{h1_3}, \eqref{h1_4}, \eqref{h1_5}, and \eqref{h1_6} that, for some $c\in(0,\infty)$,
\begin{align*}
& \E\big[\max_{t\in[0,T]}\medint\int_{U}z(\rho-\langle\rho_0\rangle_{U})\big]+\E\big[\medint\int_0^T\medint\int_{U}\rho^2\big]+\langle\xi^F\rangle_1\E\big[\medint\int_0^T\medint\int_{U}\rho\Sigma(\rho)\big]
 \\ & \leq c\Big(\E\medint\int_{U}z(\cdot,0)(\rho_0-\langle\rho_0\rangle_{U})+T\E\big[\langle\rho_0\rangle_U^2\big]+T\norm{s(\nabla\cdot s^t)}^2_{L^\infty(U)}\langle\xi^F\rangle_1^2\Big)
 \\ & \quad +c\Big((1+T)\langle\xi^F\rangle_1\E\big[\langle\rho_0\rangle_U\big]+\langle\xi^F\rangle_1\E\big[\langle\rho_0\rangle_{U}\medint\int_0^t\medint\int_{U}\Sigma(\rho)\big]\Big).
\end{align*}
Since it follows from the definition of $\Sigma$ that $\Sigma(\eta)\geq 0$ if $\eta\geq 1$ and $\Sigma(\eta)\leq 0$ if $\eta\leq 1$, and since it follows from the uniform ellipticity of $a$ in Assumption~\ref{assume_d} that
\[\norm{(\rho-\langle\rho_0\rangle_{U})}^2_{H^{-1}(U)}= \medint\int_{U}z(\rho-\langle\rho_0\rangle_{U}),\]
we have for some $c\in(0,\infty)$ that
\begin{align*}
& \E\big[\max_{t\in[0,T]}\norm{(\rho-\langle\rho_0\rangle_{U})}^2_{H^{-1}(U)}\big]+\E\big[\medint\int_0^T\medint\int_{U}\rho^2\big]
\\ & +\langle\xi^F\rangle_1\E\big[\medint\int_0^T\medint\int_{U}\rho\Sigma(\rho)\big]+\langle\xi^F\rangle_1\E\big[\langle\rho_0\rangle_{U}\medint\int_0^T\medint\int_{U}\abs{\Sigma(\rho\wedge 1)}\big]
\\ & \leq c\Big(\norm{(\rho_0-\langle\rho_0\rangle_{U})}^2_{H^{-1}(U)}+T\E\big[\langle\rho_0\rangle_U^2\big]+T\norm{s(\nabla\cdot s^t)}^2_{L^\infty(U)}\langle\xi^F\rangle_1^2\Big)
 \\ & \quad +c\Big((1+T)\langle\xi^F\rangle_1\E\big[\langle\rho_0\rangle_U\big]+\langle\xi^F\rangle_1\E\big[\langle\rho_0\rangle_{U}\medint\int_0^T\medint\int_{U}\Sigma(\rho\vee 1)\big]\Big),
\end{align*}
which completes the proof.
\end{proof}

In the final two propositions of this section, before proving that solutions exist in Theorem~\ref{thm_rks_ex}, we establish the regularity of the solution in time.  This is done in Proposition~\ref{prop_kolm}, where we show that the solution is $\P$-a.s.\ a H\"older continuous mapping into $H^{-1}(U)$, and in Proposition~\ref{prop_kcc}, where we establish the spatial regularity of the time-averaged quantities $\int_0^t\big(\rho+\frac{\langle\xi^F\rangle_1}{8}\log(\rho)\big)$, and the H\"older regularity in time of the ensemble as a mapping into the space $H^1(U)$.

\begin{prop}\label{prop_kolm}  Under Assumptions~\ref{assume_d} and \ref{assume_n}, for a smooth and bounded $\sigma\in\C^2(\R)$  that satisfies \eqref{assume_sigma}, and for a nonnegative, $\F_0$-measurable $\rho_0\in L^{2p}(\O;L^2(U))$ for some $p\in[2,\infty)$, let $\rho$ be a solution of \eqref{e_1} in the sense of Definition~\ref{sol_smooth} with initial data $\rho_0$.  Then, for some $c\in(0,\infty)$ depending on $p$, for every $r\leq t\in[0,T]$,
\begin{align*}
& \E\big[\norm{\rho_{r,t}}^{2p}_{H^{-1}(U)}\big] \leq c \Big((t-r)^p\norm{s(\nabla\cdot s^t)}^{2p}_{L^\infty(U)}\langle\xi^F\rangle_1^{2p}+(t-r)^p\langle\xi^F\rangle_1^p\E\big[\langle\rho_0\rangle_U^p\big]\Big)
\\ & \quad + c\Big(\big(1+\langle\xi^F\rangle_1^p\big)(t-r)^p\E\big[\norm{\rho}^{2p}_{L^\infty([0,T];L^2(U))}\big]+\langle \xi^F\rangle_1^\frac{p}{2}(t-r)^\frac{p}{2}\E\big[\langle \rho_0\rangle_U^p+\norm{\rho}^{2p}_{L^\infty([0,T];L^2(U))}\big]\Big).
\end{align*}
In particular, if $p>2$ then for every $\beta\in(0,\frac{1}{4}-\frac{1}{2p})$ we have for some $c\in(0,\infty)$ depending on $\beta$, $p$, $\langle\xi^F\rangle_1$, $\norm{s(\nabla\cdot s^t)}_{L^\infty(U)}$, $\E[\langle\rho_0\rangle_U^p]$, and $\E\big[\norm{\rho}^{2p}_{L^\infty([0,T];L^2(U))}\big]$ that
\[\E\big[\norm{\rho}_{\C^{0,\beta}([0,T];H^{-1}(U))}\big]<c.\]
\end{prop}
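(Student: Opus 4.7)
The plan is to bound $\norm{\rho_{r,t}}_{H^{-1}(U)}$ by duality. For mean-zero $\psi\in H^1(U)$ with $\norm{\nabla\psi}_{L^2(U)}\le 1$, the weak formulation of Definition~\ref{sol_smooth} evaluated at times $r$ and $t$ yields
\[
\medint\int_U \psi\,\rho_{r,t} = -D_1^\psi - D_2^\psi - D_3^\psi + M^\psi,
\]
where $D_1^\psi,D_2^\psi,D_3^\psi$ are the time integrals over $[r,t]$ of the three drift contributions with fluxes $G_1=a\nabla\phi(\rho)$, $G_2=\tfrac{\langle\xi^F\rangle_1}{2}\sigma'(\rho)^2 a\nabla\rho$ and $G_3=\tfrac{\langle\xi^F\rangle_1}{2}\sigma(\rho)\sigma'(\rho)s(\nabla\cdot s^t)$, and $M^\psi=\int_r^t\int_U\sigma(\rho)\nabla\psi\cdot s\,\dd\xi^F$ is the stochastic integral, with the weak form extended from $\C^\infty_c(U)$ to $H^1(U)$ using the Neumann boundary condition built into \eqref{e_1}. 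Since the preservation of mass \eqref{e_010} gives $\int_U \rho_{r,t}=0$, the restriction to mean-zero $\psi$ is harmless, and taking $\sup$ over such $\psi$ recovers $\norm{\rho_{r,t}}_{H^{-1}(U)}$.

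For each drift I would apply Cauchy--Schwarz in space and time,
\[
|D_i^\psi| \le \norm{\nabla\psi}_{L^2(U)}(t-r)^{1/2}\Big(\medint\int_r^t\norm{G_i(\cdot,s)}^2_{L^2(U)} \ds\Big)^{1/2}.
\]
For $G_1$, the bound $|\phi'|\le\tilde\Lambda$ combined with the $L^2$-energy estimate of Proposition~\ref{prop_e}(iii) with $q=p$ controls $\E[(\int_0^T\int_U |\nabla\rho|^2)^p]$ in terms of $\E[\norm{\rho}_{L^\infty([0,T];L^2(U))}^{2p}]$, $\langle\xi^F\rangle_1^p\E[\langle\rho_0\rangle_U^p]$ and the deterministic $T$-dependent terms, producing the $(1+\langle\xi^F\rangle_1^p)(t-r)^p$ contribution. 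For $G_3$, the pointwise bound $\sigma\sigma'\le c$ together with the $L^\infty$-bound on $s(\nabla\cdot s^t)$ is deterministic in time, yielding a contribution of order $(t-r)^{2p}\langle\xi^F\rangle_1^{2p}\norm{s(\nabla\cdot s^t)}_{L^\infty}^{2p}$ which is absorbed into the first stated term via $(t-r)^{2p}\le T^p(t-r)^p$. For $G_2$ I would rewrite $\sigma'(\rho)^2\nabla\rho=\nabla\Sigma(\rho)$ with $\Sigma'=\sigma'^2$ and combine with the entropy estimate of Proposition~\ref{prop_e}(iv) controlling $\int_0^T\int_U \rho^{-1}|\nabla\rho|^2$ to obtain a bound uniform in the smoothing parameter.

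For the stochastic integral I would apply the Burkholder--Davis--Gundy inequality to the scalar martingale $t'\mapsto M^\psi_{t'}$,
\[
\E\Big[\sup_{t'\in[r,t]}|M^\psi_{t'}|^{2p}\Big] \le c\,\E\Big[\Big(\medint\int_r^t\textstyle\sum_{k=1}^\infty\big(\medint\int_U\sigma(\rho)\nabla\psi\cdot sf_k\big)^2 \ds\Big)^p\Big],
\]
and by Cauchy--Schwarz in $x$ together with Assumption~\ref{assume_n}, the integrand is dominated by $c\norm{s}^2_{L^\infty}\norm{\nabla\psi}^2_{L^2}\langle\xi^F\rangle_1\int_U\sigma(\rho)^2$. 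Using $\sigma(\rho)^2\le 4\rho$ and the preservation of mass gives the bound $c(t-r)^p\langle\xi^F\rangle_1^p\E[\langle\rho_0\rangle_U^p]$, while the alternative estimate $\int_U\sigma^2\le c|U|^{1/2}\norm{\rho(\cdot,s)}_{L^2(U)}$ followed by Cauchy--Schwarz in time and H\"older's inequality in $\omega$ produces the mixed contribution $\langle\xi^F\rangle_1^{p/2}(t-r)^{p/2}\E[\langle\rho_0\rangle_U^p+\norm{\rho}^{2p}_{L^\infty L^2}]$.

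The H\"older-continuity claim then follows from Kolmogorov's continuity criterion for $H^{-1}(U)$-valued processes: the slowest power as $t-r\to 0$ is $(t-r)^{p/2}$, so for $p>2$ the exponent $p/2>1$ yields a modification in $\C^{0,\beta}([0,T];H^{-1}(U))$ for every $\beta<(p/2-1)/(2p)=\tfrac14-\tfrac{1}{2p}$. The main obstacle will be the drift $D_2^\psi$: since $\sigma'(\rho)^2$ is only controlled pointwise by $1/\rho$ and is singular near $\{\rho=0\}$, a direct Cauchy--Schwarz bound in $L^2(U)$ is not uniform in the smoothing of the square root, so it is essential to exploit the identity $\sigma'^2\nabla\rho=\nabla\Sigma(\rho)$ and the entropy bound of Proposition~\ref{prop_e}(iv) to move one factor of $\sigma'$ onto an integrable quantity while simultaneously tracking the correct exponents of $(t-r)$ and $\langle\xi^F\rangle_1$ that appear in the stated estimate.
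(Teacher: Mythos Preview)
Your duality approach has a genuine gap at the drift $D_2^\psi$. A Cauchy--Schwarz bound requires control of $\int_r^t\int_U|\sigma'(\rho)^2 a\nabla\rho|^2 = \int_r^t\int_U\sigma'(\rho)^4\,\nabla\rho\cdot a^2\nabla\rho$, and under \eqref{assume_sigma} this is only dominated by $\int_r^t\int_U\rho^{-2}|\nabla\rho|^2$. The entropy estimate of Proposition~\ref{prop_e}(iv) gives at best $\int_0^T\int_U\rho^{-1}|\nabla\rho|^2$, which is one power of $\rho^{-1}$ short; rewriting $\sigma'^2\nabla\rho=\nabla\Sigma(\rho)$ does not help, since $\|\nabla\Sigma(\rho)\|_{L^2}^2$ is exactly the same quantity. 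For a \emph{fixed} smooth $\sigma$ one can of course absorb the missing factor into $\|\sigma'\|_{L^\infty}$, but then the constant blows up as $\sigma\to\sqrt{\cdot}$, and the proposition is useless for the existence proof in Theorem~\ref{thm_rks_ex}. A smaller issue is that applying BDG to $M^\psi$ for each fixed $\psi$ and then taking $\sup_\psi$ is illegitimate: you need the $H^{-1}$-norm of the stochastic convolution, which requires either a vector-valued BDG or an eigenfunction expansion as in Proposition~\ref{prop_kcc}.

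The paper avoids both problems by not testing against arbitrary $\psi$ but by applying It\^o's formula directly to $\int_U z_{r,s}\rho_{r,s}$, where $-\nabla\cdot(a\nabla z_{r,s})=\rho_{r,s}$ with Neumann data, exactly as in Proposition~\ref{prop_h1} with $\rho_r$ in place of $\langle\rho_0\rangle_U$. After the integration by parts against $z$, the $G_2$-flux produces not an $L^2$-norm of $\nabla\Sigma(\rho)$ but the scalar term $-\langle\xi^F\rangle_1\int_r^t\int_U(\rho_s-\rho_r)\Sigma(\rho_s)$. This has a usable sign structure: the piece $\int_r^t\int_U\rho_s\Sigma(\rho_s)$ is bounded below because $\eta\Sigma(\eta)\ge -c$ uniformly (indeed $\Sigma(\eta)\ge\log\eta$ for $\eta\le 1$), and the remaining piece $\int_r^t\int_U\rho_r\,\Sigma(\rho_s\vee 1)$ is controlled via $\Sigma(\eta\vee 1)\le 2\eta$ and Cauchy--Schwarz by $(t-r)\|\rho\|_{L^\infty([0,T];L^2(U))}^2$. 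Similarly the $\phi$-term contributes $-\int_r^t\int_U(\rho_s-\rho_r)\phi(\rho_s)$, whose principal part $-\int\rho_s\phi(\rho_s)\le -\tilde\lambda\int\rho_s^2$ is coercive and can be discarded. This is why the stated bound is expressed purely in terms of $\E[\|\rho\|_{L^\infty L^2}^{2p}]$ and $\E[\langle\rho_0\rangle_U^p]$, with no gradient norms at all; your route through the $L^2$-energy and entropy estimates would not reproduce that form.
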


\begin{proof}  For every $r\leq t\in[0,T]$ let $\rho_{r,t} = \rho(\cdot,t)-\rho(\cdot,r)$ and let $z_{r,t}$ be the unique mean zero solution of the equation $-\nabla\cdot (a\nabla z_{r,t})=\rho_{r,t}$ on $U\times[r,T]$ with Neumann boundary conditions $\big(a\nabla z_{r,t}\cdot \nu\big)=0$ on $\partial U\times[r,T]$.  We repeat the above argument, for $\rho_r=\rho(\cdot,r)$ playing the role of $\langle\rho_0\rangle_U$ and for $\rho_t=\rho(\cdot,t)$, using the nondegeneracy of $\phi$ in Assumption~\ref{assume_d} and the inequality
\[\int_r^t\int_U(\rho_s-\rho_r)^2 \leq 2\int_r^t\int_U\rho_s^2+2\int_r^t\int_U\rho_r^2,\]
to prove that, $\P$-a.s.\ for every $r\leq t\in[0,T]$, for $\Sigma$ satisfying $\Sigma(1)=0$ and $\Sigma'(\eta)=\sigma'(\eta)^2$ and integrating over $s\in[r,t]$, for some $c\in(0,\infty)$,
\begin{align*}
& \medint\int_U \rho_{r,t}z_{r,t}+\medint\int_r^t\medint\int_{U}\rho_s^2+\langle\xi^F\rangle_1\medint\int_r^t\medint\int_{U}\rho_s\Sigma(\rho_s)+\langle\xi^F\rangle_1\medint\int_r^t\medint\int_{U}\rho_r\abs{\Sigma(\rho_s\wedge 1)}
\\ & \leq c\Big(\medint\int_r^t\medint\int_{U}\rho^2_r+(t-r)\norm{s(\nabla\cdot s^t)}^2_{L^\infty(U)}\langle\xi^F\rangle_1^2+\abs{\medint\int_r^t\medint\int_U\sigma(\rho)\nabla z_{r,s}\cdot s\dd\xi^F}\Big)
 \\ & \quad +c\Big(\langle\xi^F\rangle_1\medint\int_r^t\medint\int_U \rho_r\Sigma(\rho_s\vee 1) +(t-r)\langle\xi^F\rangle_1\langle\rho_0\rangle_{U} \Big).
\end{align*}
We therefore have for every $p\in[2,\infty)$, for some $c\in(0,\infty)$ depending on $p$ and $d$,
\begin{align*}
\Big(\medint\int_U  \rho_{r,t}z_{r,t}\Big)^p & \leq c \Big(\Big(\medint\int_r^t\medint\int_{U}\rho^2_r\Big)^p+(t-r)^p\norm{s(\nabla\cdot s^t)}^{2p}_{L^\infty(U)}\langle\xi^F\rangle_1^{2p}+(t-r)^p\langle\xi^F\rangle_1^p\langle\rho_0\rangle_U^p\Big)
\\ & \quad + c\Big(\langle\xi^F\rangle^p_1\Big(\medint\int_r^t\medint\int_U \rho_r\Sigma(\rho_s\vee 1)\Big)^p +\abs{\medint\int_r^t\medint\int_U\sigma(\rho)\nabla z_{r,s}\cdot s\dd\xi^F}^p \Big).
\end{align*}
Since it follows from the bounds \eqref{assume_sigma} on $\sigma$ that $\Sigma(\rho_t\vee 1)\leq 2 \rho_t$, it is a consequence of H\"older's inequality that
\begin{align*}
\Big(\medint\int_U  \rho_{r,t}z_{r,t}\Big)^p & \leq c \Big((t-r)^p\norm{s(\nabla\cdot s^t)}^{2p}_{L^\infty(U)}\langle\xi^F\rangle_1^{2p}+(t-r)^p\langle\xi^F\rangle_1^p\langle\rho_0\rangle_U^p\Big)
\\ & \quad + c\Big(\big(1+\langle\xi^F\rangle_1^p\big)(t-r)^p\E\big[\norm{\rho}^{2p}_{L^\infty([0,T];L^2(U))}\big]+\E\abs{\medint\int_r^t\medint\int_U\sigma(\rho)\nabla z_{r,s}\cdot s\dd\xi^F}^p\Big).
\end{align*}
For the stochastic integral, we have using the Burkh\"older--Davis--Gundy inequality, H\"older's inequality, the boundedness of $s$, and the structure of the noise in Assumption~\ref{assume_n} that, for some $c\in(0,\infty)$,
\begin{align*}
\E\Big[\abs{\medint\int_r^t\medint\int_U\sigma(\rho)\nabla z_{r,s}\cdot s\dd\xi^F}^p\Big]& \leq c\E\Big[\Big(\medint\int_r^t\textstyle\sum_{k=1}^\infty\textstyle\sum_{i=1}^d\big(\medint\int_U\sigma(\rho)(s^t\nabla z_{r,s})_if_k\big)^2\Big)^\frac{p}{2}\Big]
\\ & \leq c\langle \xi^F\rangle_1^\frac{p}{2}\E\Big[\Big(\medint\int_r^t\big(\medint\int_U\sigma(\rho)^2\big)\big(\medint\int_U \abs{\nabla z_{r,s}}^2\big)\Big)^\frac{p}{2}\Big].
\end{align*}
Since the energy estimate for $z_{r,s}$ proves that, for some $c\in(0,\infty)$,
\[\medint\int_U\abs{\nabla z_{r,s}}^2\leq c\medint\int_U \rho_{r,s}^2,\]
we have using the bounds \eqref{assume_sigma} on $\sigma$, the preservation of mass, and Young's inequality that, for some $c\in(0,\infty)$,
\begin{align*}
\E\Big[\abs{\medint\int_r^t\medint\int_U\sigma(\rho)\nabla z_{r,s}\cdot s\dd\xi^F}^p\Big] & \leq c\langle \xi^F\rangle_1^\frac{p}{2}(t-r)^\frac{p}{2}\E\big[\langle \rho_0\rangle_U^\frac{p}{2}\norm{\rho}^{p}_{L^\infty([0,T];L^2(U))}\big]
\\ & \leq c\langle \xi^F\rangle_1^\frac{p}{2}(t-r)^\frac{p}{2}\E\big[\langle \rho_0\rangle_U^p+\norm{\rho}^{2p}_{L^\infty([0,T];L^2(U))}\big].
\end{align*}
Since it follows from the uniform ellipticity of $a$ in Assumption~\ref{assume_d} that
\[\norm{\rho_{r,t}}^2_{H^{-1}(U)}= \medint\int_U  \rho_{r,t}z_{r,t},\]
we have that
\begin{align*}
& \E\big[\norm{\rho_{r,t}}^{2p}_{H^{-1}(U)}\big] \leq c \Big((t-r)^p\norm{s(\nabla\cdot s^t)}^{2p}_{L^\infty(U)}\langle\xi^F\rangle_1^{2p}+(t-r)^p\langle\xi^F\rangle_1^p\langle\rho_0\rangle_U^p\Big)
\\ & \quad + c\Big(\big(1+\langle\xi^F\rangle_1^p\big)(t-r)^p\E\big[\norm{\rho}^{2p}_{L^\infty([0,T];L^2(U))}\big]+\langle \xi^F\rangle_1^\frac{p}{2}(t-r)^\frac{p}{2}\E\big[\langle \rho_0\rangle_U^p+\norm{\rho}^{2p}_{L^\infty([0,T];L^2(U))}\big]\Big).
\end{align*}
The final claim then follows from the quantitative version of the Kolmogorov continuity theorem (see, for example, Friz and Victoir \cite[Corollaries A.10, A.11]{FrizVictoir}).
\end{proof}

\begin{prop}\label{prop_kcc}  Under Assumptions~\ref{assume_d} and \ref{assume_n}, for a smooth and bounded $\sigma\in\C^2(\R)$  that satisfies \eqref{assume_sigma}, and for a nonnegative, $\F_0$-measurable $\rho_0\in L^{2p}(\O;L^2(U))$ for some $p\in[2,\infty)$, let $\rho$ be a solution of \eqref{e_1} in the sense of Definition~\ref{sol_smooth} with initial data $\rho_0$.  Then, for some $c\in(0,\infty)$ depending on $p$, for every $r\leq t\in[0,T]$,
\begin{align*}
 & \E\Big[\norm{\medint\int_r^t\Big(\phi(\rho)+\frac{\langle\xi^F\rangle_1}{2}\Sigma(\rho)-\langle \phi(\rho)+\frac{\langle\xi^F\rangle_1}{2}\Sigma(\rho)\rangle_U\Big)}^{2p}_{H^1(U)}\Big] \leq c\E\big[\norm{\rho_{r,t}}^{2p}_{H^{-1}(U)}\big]
 \\ &  \quad + c\Big(\langle \xi^F\rangle^{2p}_1(t-r)^p\E\big[\langle\rho_0\rangle^p_{U}\big]+\langle\xi^F\rangle^{2p}_1\norm{s(\nabla\cdot s^t)}^{2p}_{L^\infty(U)}(t-r)^p\Big).
\end{align*}
In particular, if $p>2$ then for every $\beta\in(0,\frac{1}{4}-\frac{1}{2p})$ we have for some $c\in(0,\infty)$ depending on $\beta$, $p$, $\langle\xi^F\rangle_1$, $\norm{s(\nabla\cdot s^t)}_{L^\infty(U)}$, $\E[\langle\rho_0\rangle_U^p]$, and $\E\big[\norm{\rho}^{2p}_{L^\infty([0,T];L^2(U))}\big]$ that the map
\[t\in[0,T]\rightarrow L(\rho)(t)=\Big(\big(\medint\int_0^t\phi(\rho)+\frac{\langle\xi^F\rangle_1}{2}\Sigma(\rho)\big)-\langle \phi(\rho)+\frac{\langle\xi^F\rangle_1}{2}\Sigma(\rho)\rangle_U\Big)\in H^1(U),\]
satisfies
\[\E\big[\norm{L(\rho)}_{\C^{0,\beta}([0,T];H^{1}(U))}\big]<c.\]
\end{prop}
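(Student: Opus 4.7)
The plan is to rewrite equation~\eqref{e_1} as a deterministic PDE for the time-integrated quantity $F_{r,t}=\int_r^t\big(\phi(\rho)+\frac{\langle\xi^F\rangle_1}{2}\Sigma(\rho)\big)$, test against its mean-zero component, and then apply Kolmogorov's continuity criterion.  Since $\sigma'(\rho)^2 a\nabla\rho = a\nabla\Sigma(\rho)$, integrating \eqref{e_1} in time on $[r,t]$ yields the identity
\[\rho_{r,t} = \nabla\cdot(a\nabla F_{r,t})-\nabla\cdot M_{r,t}+\nabla\cdot D_{r,t},\]
where $M_{r,t}=\medint\int_r^t\sigma(\rho)s\dd\xi^F$ is an $L^2(U)^d$-valued martingale and $D_{r,t}=\frac{\langle\xi^F\rangle_1}{2}\medint\int_r^t\sigma(\rho)\sigma'(\rho)s(\nabla\cdot s^t)$ is a bounded drift.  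The no-flux boundary condition inherited from Definition~\ref{sol_smooth} takes, after time integration, the form $(a\nabla F_{r,t}-M_{r,t}+D_{r,t})\cdot\nu=0$ on $\partial U$.

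Setting $\bar F_{r,t}=F_{r,t}-\langle F_{r,t}\rangle_U$, I would test the above identity against $\bar F_{r,t}\in H^1(U)$, which is admissible because the $L^2([0,T];H^1(U))$-regularity of $\rho$ makes $F_{r,t}$ an $H^1(U)$ function.  The boundary contributions cancel thanks to the time-integrated no-flux condition, and using that $\rho_{r,t}$ has spatial mean zero (preservation of mass) I get
\[\medint\int_U a\nabla F_{r,t}\cdot\nabla F_{r,t} = -\medint\int_U\rho_{r,t}\bar F_{r,t}+\medint\int_U(M_{r,t}-D_{r,t})\cdot\nabla F_{r,t}.\]
The uniform ellipticity of $a$ from Assumption~\ref{assume_d}, the Poincar\'e inequality for mean-zero functions, H\"older's inequality, and Young's inequality then yield the pathwise bound
\[\norm{\bar F_{r,t}}_{H^1(U)}^2\leq c\big(\norm{\rho_{r,t}}_{H^{-1}(U)}^2+\norm{M_{r,t}}_{L^2(U)}^2+\norm{D_{r,t}}_{L^2(U)}^2\big).\]

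Next I would estimate the $2p$-th moments of the three terms.  The pointwise bound $\sigma(\eta)\sigma'(\eta)\leq 2$ from \eqref{assume_sigma} gives $\norm{D_{r,t}}_{L^2(U)}^{2p}\leq c\langle\xi^F\rangle_1^{2p}(t-r)^{2p}\norm{s(\nabla\cdot s^t)}_{L^\infty(U)}^{2p}$, which absorbs into $(t-r)^p$ on the bounded interval $[0,T]$.  For $M_{r,t}$, the Burkh\"older--Davis--Gundy inequality applied in the Hilbert space $L^2(U)$, combined with $\sigma(\rho)^2\leq 4\rho$ from \eqref{assume_sigma}, the boundedness of $s$, the structure of the noise in Assumption~\ref{assume_n}, and the preservation of mass \eqref{e_010}, yields
\[\E\norm{M_{r,t}}_{L^2(U)}^{2p}\leq c\langle\xi^F\rangle_1^p(t-r)^p\E[\langle\rho_0\rangle_U^p],\]
which is controlled by the stated $\langle\xi^F\rangle_1^{2p}$ expression up to a harmless constant.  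Taking expectations and combining completes the first inequality.

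For the H\"older estimate, given $p>2$ the first inequality together with Proposition~\ref{prop_kolm} produces $\E[\norm{L(\rho)_t-L(\rho)_r}_{H^1(U)}^{2p}]\leq c(t-r)^{p/2}$, so the quantitative Kolmogorov continuity theorem of Friz and Victoir \cite[Corollaries~A.10, A.11]{FrizVictoir} provides an $H^1(U)$-valued continuous modification satisfying $\E[\norm{L(\rho)}_{\C^{0,\beta}([0,T];H^1(U))}]<\infty$ for every $\beta\in(0,\tfrac{1}{4}-\tfrac{1}{2p})$.  The main obstacle I anticipate is the careful justification that $\bar F_{r,t}$ is an admissible Neumann test function in the weak formulation of Definition~\ref{sol_smooth} (whose test class is $\C^\infty_c(U)$), which requires a density argument exploiting both the $L^2([0,T];H^1(U))$-regularity of $\rho$ and the precise form of the no-flux condition integrated in time; once this is in place, all other steps are standard energy and BDG computations.
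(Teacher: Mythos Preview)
Your argument is correct and arrives at the same estimate, but the route differs from the paper's in a mildly interesting way.  The paper expands in the Neumann eigenfunctions $e_j$ of $-\nabla\cdot a\nabla$: testing \eqref{e_1} against each $e_j$ and using $(a\nabla e_j\cdot\nu)=0$ to integrate by parts twice converts the diffusive terms into $-\lambda_j\int_U(\phi(\rho)+\frac{\langle\xi^F\rangle_1}{2}\Sigma(\rho))e_j$; after dividing by $\sqrt{\lambda_j}$, squaring, and summing over $j$, Parseval identifies the $H^1_a$-seminorm of $\bar F_{r,t}$ and the $H^{-1}_a$-norm of $\rho_{r,t}$, while the stochastic and drift remainders are controlled via Bessel's inequality for the orthonormal system $\{\lambda_j^{-1/2}s^t\nabla e_j\}$ in $L^2(U)^d$.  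Your approach is the variational form of the same computation: you test the time-integrated equation against $\bar F_{r,t}$ directly and use Cauchy--Schwarz and Young's inequality in place of Parseval and Bessel.  Both arguments require enlarging the test class in Definition~\ref{sol_smooth} from $\C^\infty_c(U)$ to $H^1(U)$ (the $e_j$ are not compactly supported either), so the density step you flag is no more delicate than what the paper needs.  Your version is slightly more elementary in that it avoids the spectral basis; the paper's has the minor advantage that the Neumann condition on $e_j$ makes the second integration by parts transparent, so there is no need to articulate a ``time-integrated no-flux condition'' (which, in any case, is already encoded in the weak Neumann formulation once $H^1(U)$ test functions are allowed).
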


\begin{proof}  Let $\{e_k\}_{k\in\N}$ be an orthonormal basis of $L^2(U)$-eigenfunctions that is an orthogonal $H^1_a(U)$ basis satisfying $-\nabla\cdot a\nabla e_k=\lambda_ke_k$ with Neumann boundary conditions $\big(a\nabla e_k\cdot\nu)=0$ on $\partial U$, for ordered eigenvalues $0\leq \lambda_i\leq\lambda_j$ whenever $i\leq j$.  We then have for every $0\leq r\leq t\leq T$ that, for every $\lambda_j\neq 0$, for $\Sigma$ defined by $\Sigma(1)=0$ and $\Sigma'(\eta)=\sigma'(\eta)^2$,
\begin{align*}
\medint\int_{U}\rho e_j\Big|_{r}^{t} & = -\lambda_j\medint\int_r^t\medint\int_{U}\Big(\phi(\rho)+\frac{\langle\xi^F\rangle_1}{2}\Sigma(\rho)\Big)e_j+\medint\int_r^t\medint\int_U\sigma(\rho)s^t\nabla e_j\cdot\dd\xi^F
\\ & \quad -\frac{\langle\xi^F\rangle_1}{2}\medint\int_r^t\medint\int_{U}\sigma(\rho)\sigma'(\rho)s(\nabla\cdot s^t)\cdot\nabla e_j.
\end{align*}
Dividing by $\sqrt{\lambda_j}$ for every $\lambda_j\neq 0$, for some $c\in(0,\infty)$,
\begin{align*}
& \textstyle\sum_{\lambda_j\neq 0}\lambda_j\big(\medint\int_\R^s\medint\int_U\big(\phi(\rho)+\frac{\langle\xi^F\rangle_1}{2}\Sigma(\rho)\big)\big)e_j\big)^2
\\ & \leq c\textstyle\sum_{\lambda_j\neq 0}\frac{1}{\lambda_j}\big(\medint\int_{U}\rho e_j\Big|_{t=r}^{t=s}\big)^2+c\textstyle\sum_{\lambda_j\neq 0}\big(\medint\int_\R^s\medint\int_U\sigma(\rho)\frac{s^t\nabla e_j}{\sqrt{\lambda_j}}\cdot\dd\xi^F)\big)^2
\\ & \quad +c\textstyle\sum_{\lambda_j\neq 0}\langle\xi^F\rangle_1^2\big(\medint\int_\R^s\medint\int_{U}\sigma(\rho)\sigma'(\rho)(\nabla\cdot s^t)\cdot\frac{s^t\nabla e_j}{\sqrt{\lambda_j}}\big)^2,
\end{align*}
and therefore, using the definition of the $H^1$-, $L^2$- and $H^{-1}$-norms, for every $p\in[2,\infty)$ there exists $c\in(0,\infty)$ depending on $p$ such that
\begin{align}\label{kcc_1}
& \E\big[\norm{\nabla \medint\int_r^t\medint\int_U\big(\phi(\rho)+\frac{\langle\xi^F\rangle_1}{2}\Sigma(\rho)\big)}^{2p}_{L^2(U)}\big]\leq c\E\big[\norm{\rho_{r,t}}^{2p}_{H^{-1}(U)}\big]
\\ \nonumber & \quad +c\E\Big[\Big(\textstyle\sum_{\lambda_j\neq 0}\big(\medint\int_r^t\medint\int_U\sigma(\rho)\frac{s^t\nabla e_j}{\sqrt{\lambda_j}}\cdot\dd\xi^F)\big)^2\Big)^{p}\Big]
\\ \nonumber & \quad +c\E\Big[\Big(\textstyle\sum_{\lambda_j\neq 0}\langle\xi^F\rangle_1^2\big(\medint\int_r^t\medint\int_{U}\sigma(\rho)\sigma'(\rho)(\nabla\cdot s^t)\cdot\frac{s^t\nabla e_j}{\sqrt{\lambda_j}}\big)^2\Big)^p\Big].
\end{align}
For the stochastic integral we have using the structure of the noise in Assumption~\ref{assume_n}, the bounds \eqref{assume_sigma} on $\sigma$, the fact that the $\sqrt{\lambda_j}^{-1}s^t\nabla e_j$ form an orthonormal system, and the preservation of mass in Definition~\ref{sol_def} that, for $c\in(0,\infty)$ depending on $d$,
\begin{align*} 
\E\Big[\Big(\textstyle\sum_{\lambda_j\neq 0}\Big(\medint\int_r^t\medint\int_U\sigma(\rho)\frac{s^t\nabla e_j}{\sqrt{\lambda_j}}\cdot\dd\xi^F)\Big)^2\Big)^p\Big] & =\E\Big[\Big(\textstyle\sum_{\lambda_j\neq 0}\medint\int_r^t\textstyle\sum_{k=1}^\infty\textstyle\sum_{i=1}^d\Big(\medint\int_{U}\sigma(\rho)f_k\frac{(s^t\nabla e_j)_i}{\sqrt{\lambda_j}}\Big)^2\Big)^p\Big]
\\ & \leq c\E\Big[\Big(\medint\int_r^t\medint\int_U\textstyle\sum_{k=1}^\infty \medint\int_{U}\sigma(\rho)^2f_k^2\Big)^p\Big]
\\ &  =c\langle \xi^F\rangle^{p}_1(t-r)^p\E\big[\langle\rho_0\rangle^p_{U}\big].
\end{align*}
Similarly, for the final term on the righthand side of \eqref{kcc_1}, we have using the bounds \eqref{assume_sigma} on $\sigma$ that, for some $c\in(0,\infty)$ depending on $d$,
\begin{align*}
& \E\Big[\Big(\textstyle\sum_{\lambda_j\neq 0}\langle\xi^F\rangle_1^2\Big(\medint\int_r^t\medint\int_{U}\sigma(\rho)\sigma'(\rho)(\nabla\cdot s^t)\cdot\frac{s^t\nabla e_j}{\sqrt{\lambda_j}}\Big)^2\Big)^p\Big]
\\ & \leq c\langle\xi^F\rangle^{2p}_1\norm{s(\nabla\cdot s^t)}^{2p}_{L^\infty(U)}(t-r)^p.
\end{align*}
Therefore, returning to \eqref{kcc_1}, for every $p\in[2,\infty)$, for some $c\in(0,\infty)$ depending on $p$ and $d$,
\begin{align*}
 & \E\Big[\norm{\nabla \medint\int_r^t\big(\phi(\rho)+\frac{\langle\xi^F\rangle_1}{2}\Sigma(\rho)\big)}^{2p}_{L^2(U)}\Big] \leq c\E\big[\norm{\rho_{r,t}}^{2p}_{H^{-1}(U)}\big]
 \\ &  \quad + c\Big(\langle \xi^F\rangle^{2p}_1(t-r)^p\E\big[\langle\rho_0\rangle^p_{U}\big]+\langle\xi^F\rangle^{2p}_1\norm{s(\nabla\cdot s^t)}^{2p}_{L^\infty(U)}(t-r)^p\Big).
\end{align*}
Since Young's inequality proves that, for some $c\in(0,\infty)$ depending on $U$,
\[\norm{\medint\int_r^t \Big(\phi(\rho)+\frac{\langle\xi^F\rangle_1}{2}\Sigma(\rho)-\langle \phi(\rho)+\frac{\langle\xi^F\rangle_1}{2}\Sigma(\rho)\rangle_U\Big)}^2_{H^1(U)}\leq c\norm{\nabla \medint\int_r^t\big(\phi(\rho)+\frac{\langle\xi^F\rangle_1}{2}\Sigma(\rho)\big)}^{2}_{L^2(U)},\]
we have, for every $p\in[2,\infty)$, for some $c\in(0,\infty)$ depending on $p$, $d$, and $U$ that
\begin{align*}
 & \E\Big[\norm{\medint\int_r^t\Big(\phi(\rho)+\frac{\langle\xi^F\rangle_1}{2}\Sigma(\rho)-\langle\phi(\rho)+\frac{\langle\xi^F\rangle_1}{2}\Sigma(\rho)\rangle_U\Big)}^{2p}_{H^1(U)}\Big] \leq c\E\big[\norm{\rho_{r,t}}^{2p}_{H^{-1}(U)}\big]
 \\ &  \quad + c\Big(\langle \xi^F\rangle^{2p}_1(t-r)^p\E\big[\langle\rho_0\rangle^p_{U}\big]+\langle\xi^F\rangle^{2p}_1\norm{s(\nabla\cdot s^t)}^{2p}_{L^\infty(U)}(t-r)^p\Big).
\end{align*}
The final claim then follows from Proposition~\ref{prop_kolm} and the quantitative Kolmogorov continuity theorem (see, for example, \cite[Corollaries A.10, A.11]{FrizVictoir}).  \end{proof}

We will now prove the existence of stochastic kinetic solutions to \eqref{i_eq} in the sense of Definition~\ref{sol_def}.  The construction of a probabilistically strong solution relies also on the following lemma.

\begin{lem}\label{lemma_weak}  Let $(\O,\F,\P)$ be a probability space and let $\overline{X}$ be a complete separable metric space.  A sequence $\{X_n\colon\O\rightarrow \overline{X}\}$ of $\overline{X}$-valued random variables converges in probability, as $n\rightarrow\infty$, if and only if for any two sequences $\{(n_k,m_k)\}_{k=1}^\infty$ that satisfy $n_k,m_k\rightarrow\infty$ as $k\rightarrow\infty$ there exists subsequences $\{(n_{k'},m_{k'})\}_{k'=1}^\infty$ that satisfy $n_{k'},m_{k'}\rightarrow\infty$ as $k'\rightarrow\infty$ such that the joint law of $(X_{n_{k'}},X_{m_{k'}})_{k'\in\N}$ converges weakly, as $k'\rightarrow\infty$, to a measure $\mu$ on $\overline{X}\times\overline{X}$ that satisfies
\[\mu(\{(x,y)\in\overline{X}\times \overline{X}\colon x=y\})=1.\]
\end{lem}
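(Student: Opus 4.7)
The plan is to prove both implications, with the forward direction being essentially immediate and the backward direction proceeding by contradiction via a Cauchy-in-probability argument. Throughout I will use the standard fact that on a complete separable metric space $\overline{X}$, the set of $\overline{X}$-valued random variables is complete with respect to convergence in probability, so a Cauchy sequence in probability admits a limit in probability.

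For the forward direction, I would assume that $X_n$ converges in probability to some $X$. Given any sequences $n_k,m_k\to\infty$, the pair $(X_{n_k},X_{m_k})$ then converges in probability, and hence in distribution, to the diagonal random variable $(X,X)$. Since the law $\mu$ of $(X,X)$ is supported on the diagonal $\{(x,y)\in\overline{X}\times\overline{X}\colon x=y\}$, the conclusion holds along the full sequence indexed by $k$, so no further subsequence extraction is needed.

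For the backward direction, I would argue by contrapositive. Suppose $X_n$ does not converge in probability. By completeness of convergence in probability on $\overline{X}$, the sequence $\{X_n\}$ fails to be Cauchy in probability, so there exist $\varepsilon,\delta\in(0,1)$ and sequences $n_k,m_k\to\infty$ with
\[\P\big(d_{\overline{X}}(X_{n_k},X_{m_k})\geq \varepsilon\big)\geq \delta\quad\text{for every }k\in\N.\]
Applying the hypothesis to these sequences yields subsequences $n_{k'},m_{k'}\to\infty$ along which the joint laws converge weakly to some $\mu$ with $\mu(\{x=y\})=1$. Since the set $A=\{(x,y)\in\overline{X}\times\overline{X}\colon d_{\overline{X}}(x,y)\geq\varepsilon\}$ is closed and disjoint from the diagonal, the closed-set part of the Portmanteau theorem gives
\[\limsup_{k'\rightarrow\infty}\P\big(d_{\overline{X}}(X_{n_{k'}},X_{m_{k'}})\geq\varepsilon\big)\leq \mu(A)=0,\]
which contradicts the lower bound $\delta$ inherited from the original sequences.

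The main obstacle is really only a matter of care: one must use the closed-set version of Portmanteau (not the open-set version, which would produce the wrong inequality), and one must be explicit that the pair $(n_k,m_k)$ witnessing failure of the Cauchy property in probability actually satisfies the hypothesis of the lemma so that subsequences exist along which weak convergence to a diagonal measure occurs. Once these two points are handled, the contradiction is immediate and the proof concludes by invoking completeness of convergence in probability to upgrade Cauchy to convergent.
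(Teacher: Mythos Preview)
Your proof is correct. The paper does not actually prove this lemma; it simply cites Gy\"ongy and Krylov \cite[Lemma~1.1]{GyoKry1996}, so there is no approach to compare against beyond noting that your argument is precisely the standard one used to establish that result.
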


\begin{proof} The proof appears in Gy\"ongy and Krylov \cite[Lemma~1.1]{GyoKry1996}. \end{proof}

\begin{thm}\label{thm_rks_ex} Under Assumptions~\ref{assume_d} and \ref{assume_n}, for every nonnegative, $\F_0$-measurable $\rho_0\in L^1(\O;L^1(U))$ there exists a stochastic kinetic solution of \eqref{i_eq} in the sense of Definition~\ref{sol_def}. \end{thm}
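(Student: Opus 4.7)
The plan is to construct the solution as a limit of solutions of the regularized equation \eqref{e_1}. I first approximate $\sqrt{\cdot}$ by smooth bounded $\sigma_n \in \C^2(\R)$ satisfying \eqref{assume_sigma}, with $\sigma_n \to \sqrt{\cdot}$ pointwise on $[0,\infty)$, $\sigma_n \sigma_n' \to \tfrac{1}{2}\mathbf{1}_{\{\eta>0\}}$, and $\sigma_n'(\eta)^2 \to \tfrac{1}{4\eta}\mathbf{1}_{\{\eta>0\}}$ on $(0,\infty)$. Since the given initial data is only in $L^1(\O;L^1(U))$, I also approximate $\rho_0$ by nonnegative $\F_0$-measurable $\rho_{0,n} \in L^{2p}(\O; L^2(U))$ for some $p>2$, obtained by truncation and mollification, with $\rho_{0,n} \to \rho_0$ in $L^1(\O; L^1(U))$. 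Proposition~\ref{prop_entropy} then produces a unique solution $\rho_n$ of \eqref{e_1} with data $(\sigma_n, \rho_{0,n})$.

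Propositions~\ref{prop_e}, \ref{prop_measure}, \ref{prop_h1}, \ref{prop_kolm}, and \ref{prop_kcc} yield uniform-in-$n$ estimates tailored precisely to this passage to the limit: $L^p(\O)$-control of $\norm{\rho_n}_{L^\infty([0,T];L^2(U))}$ and $\norm{\rho_n}_{L^2([0,T];H^1(U))}$; uniform integrability of $\langle\rho_{0,n}\rangle_U \abs{\Sigma_n(\rho_n \wedge 1)}$, which in the limit becomes $\langle\rho_0\rangle_U \abs{\log(\rho \wedge 1)}$; H\"older regularity $\E\norm{\rho_n}_{\C^{0,\beta}([0,T];H^{-1}(U))} \leq c$ and $\E\norm{L(\rho_n)}_{\C^{0,\beta}([0,T];H^1(U))} \leq c$ for some $\beta \in (0,\tfrac{1}{2})$; and control of the parabolic defect measures $q_n$ of $\rho_n$, including tightness and vanishing at infinity. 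Together with the Aubin--Lions--Simon lemma, these imply tightness of the joint laws of $(\rho_n, L(\rho_n), q_n, \xi^F, \rho_{0,n})$ on a suitable Polish space built from $\C([0,T]; H^{-1}(U)) \cap L^2(U \times [0,T])$, $\C([0,T]; H^1(U))$, the space of nonnegative Radon measures on $U \times \R \times [0,T]$, a path space for the noise, and $L^1(U)$.

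A Skorokhod representation together with Gy\"ongy--Krylov's Lemma~\ref{lemma_weak} and the uniqueness Theorem~\ref{thm_unique} then promote a subsequential weak limit to a probabilistically strong limit $(\rho, L, q)$ on the original probability space. Passing to the limit in the kinetic form of \eqref{e_1} requires identifying $\sigma_n(\rho_n) \to \sqrt{\rho}$, $\sigma_n(\rho_n)\sigma_n'(\rho_n) \to \tfrac{1}{2}\mathbf{1}_{\{\rho>0\}}$, and $\sigma_n'(\rho_n)^2 \nabla \rho_n \to \tfrac{1}{4}\nabla \log(\rho)$ in appropriate weak senses, and extracting $q$ as a weak limit of the parabolic defect measures of $\rho_n$. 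The preservation of mass \eqref{d_mass}, the local regularity \eqref{d_s2}, the defect-measure bound \eqref{d_s3}, and the vanishing at infinity \eqref{d_s4} pass by lower semicontinuity and the uniform estimates, and the kinetic equation \eqref{sol_eq} is recovered away from $\{\rho = 0\}$ using that the test functions are compactly supported in $(0,\infty)$ in the velocity variable.

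The main obstacle is the verification of the non-vanishing property \eqref{s_log}, which is the only place where the discontinuous coefficient $\mathbf{1}_{\{\rho>0\}}$ is genuinely delicate. Passing Proposition~\ref{prop_h1} to the limit, the $\P$-almost sure integrability of $\langle\rho_0\rangle_U \abs{\log(\rho \wedge 1)}$ on $U \times [0,T]$ rules out $\{\rho = 0\}$ having positive $(d+1)$-dimensional measure on $\{\rho_0 \neq 0\}$. For the boundary, the time-averaged $H^1(U)$-regularity of $L(\rho)$ from Proposition~\ref{prop_kcc}, combined with the trace theorem applied to the truncated logarithm as in Remark~\ref{remark_sol}, gives the vanishing of the trace-sense zero set on $\partial U \times [0,T]$. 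The dichotomy on $\{\rho_0 = 0\}$ follows directly from preservation of mass. Finally, continuity of $\rho$ as an $L^1(U)$-valued process holds at $t>0$ by Proposition~\ref{prop_kolm} combined with the spatial regularity, and at $t=0$ by a short-time $L^1(U)$-continuity estimate obtained via the pathwise $L^1$-contraction of Theorem~\ref{thm_unique} applied at the approximation level to compare $\rho_n$ with a separate approximation of $\rho_0$.
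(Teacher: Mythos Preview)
Your overall strategy matches the paper's: regularize $\sigma$, use Propositions~\ref{prop_e}--\ref{prop_kcc} for tightness, pass to the limit via Skorokhod plus Lemma~\ref{lemma_weak} together with Theorem~\ref{thm_unique}, and verify the non-vanishing property \eqref{s_log} from the integrability of $\log(\rho)$ and the trace of $L(\rho)$. The identification of the nonlinear terms and of the kinetic measure is also as in the paper.

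The genuine gap is your treatment of the $L^1(U)$-continuity in time, which is part of Definition~\ref{sol_def} and must be verified \emph{before} Theorem~\ref{thm_unique} can be invoked inside the Gy\"ongy--Krylov step. Your claim that continuity at $t>0$ follows from ``Proposition~\ref{prop_kolm} combined with the spatial regularity'' does not work: $\C^{0,\beta}([0,T];H^{-1})$ regularity together with $L^\infty([0,T];L^2)$ boundedness does not yield $L^1$-continuity. The paper instead argues at the level of the kinetic function: it constructs right- and left-continuous representatives $\tilde{\chi}^\pm$, shows they are $\{0,1\}$-valued by rerunning the uniqueness computation on $\int\tilde{\chi}^\pm(1-\tilde{\chi}^\pm)$, deduces $\tilde{\chi}^\pm=\overline{\chi}(\tilde{\rho}^\pm)$ for some $\tilde{\rho}^\pm$, proves $\tilde{\rho}^+=\tilde{\rho}^-$, and only then upgrades weak continuity of $\tilde{\chi}$ to strong $L^1$-continuity of $\tilde{\rho}$. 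This simultaneously shows that $\tilde{q}$ has no atoms in time, a point you do not mention but which is needed for \eqref{sol_eq} to hold at every $t$.

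For the initial condition, your proposal to obtain continuity at $t=0$ ``via the pathwise $L^1$-contraction of Theorem~\ref{thm_unique} applied at the approximation level'' is circular: Theorem~\ref{thm_unique} compares stochastic kinetic solutions of \eqref{i_eq}, not solutions of \eqref{e_1} with different $\sigma_n$, and it cannot be applied to the limit until the limit has already been shown to satisfy Definition~\ref{sol_def}. The paper avoids this by a staged approximation: it first takes $\rho_0\in L^{2p}(\O;H^1(U))$, so that the weak equation \eqref{nex_11} may be tested with $\psi=\tilde{\rho}_0$; polarizing against \eqref{exis_131} yields $q(U\times\R\times\{0\})=0$ directly. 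Only after existence is established for $H^1$ data does the paper extend to $L^2$ and then $L^1$ data by density and the contraction of Theorem~\ref{thm_unique}. Your simultaneous approximation of $\sigma$ and $\rho_0$ loses access to this argument. A smaller omission is the identification of the limiting martingales as stochastic integrals against the limiting noise via their quadratic covariation with $\tilde{\beta}$, which is a necessary step after the Skorokhod transfer.
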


\begin{proof}  We first consider nonnegative, $\F_0$-measurable initial data $\rho_0\in L^{2p}(\O;H^1(U))$ for some $p\in(2,\infty)$, and we let $\{\sigma_n\}_{n\in\N}$ be a sequence of smooth functions $\sigma_n\in\C^\infty(\R)$ satisfying that $\sigma_n(\eta)=0$ if $\eta\leq 0$, that $\sigma'_n\in\C^\infty_c(\R)$ is nonnegative, that for every $n\in\N$ the $\sigma_n$ satisfy the bounds \eqref{assume_sigma}, and that locally uniformly on $(0,\infty)$, as $n\rightarrow\infty$,
\begin{equation}\label{exi_1}\sigma_n(\eta)\rightarrow \sqrt{\eta}\;\;\textrm{and}\;\;\sigma_n'(\eta)\rightarrow\frac{1}{2\sqrt{\eta}}.\end{equation}
For every $n\in\N$ let $\rho_n$ be the solution of \eqref{e_1} in the sense of Definition~\ref{sol_smooth} constructed in Proposition~\ref{prop_entropy} with nonlinearity $\sigma_n$ and initial data $\rho_0$.  We will first show that the $\rho_n$ converge to a weak solution of \eqref{i_eq}, and then use It\^o's formula to derive the kinetic form.  In the last step of the proof, we will extend the solution theory to nonnegative, $\F_0$-measurable initial data $\rho_0\in L^1(\O;L^1(U))$.

\textbf{Tightness of the martingale terms}:  We first establish the tightness of the martingale terms appearing in equation \eqref{sol_eq} of Definition~\ref{sol_def}.  Let $\psi\in\C^\infty_c(\overline{U}\times(0,\infty))$ and let
\begin{align*}
M^{n,\psi}_t = \medint\int_0^t\medint\int_U\sigma_n(\rho_n)(\nabla_x\psi)(x,\rho_n)\cdot s\dd\xi^F+\medint\int_0^t\medint\int_U\sigma_n(\rho_n)(\partial_\eta\psi)(x,\rho_n)\nabla\rho_n\cdot s\dd\xi^F.
\end{align*}
Let $\Psi_n$ be the unique function satisfying $\Psi_n(x,0)=0$ and $\Psi_n'(x,\eta) = \sigma_n(\eta)(\partial_\eta\psi)(x,\eta)$.  After integrating by parts, using the smoothness of $\Psi_n$ and the $H^1$-regularity of $\rho_n$ to apply the trace theorem, for the outer unit normal $\nu$ of $U$,
\begin{align*}
M^{n,\psi}_t & = \medint\int_0^t\medint\int_U\sigma_n(\rho_n)(\nabla_x\psi)(x,\rho_n)\cdot s\dd\xi^F-\medint\int_0^t\medint\int_U\Psi_n(x,\rho_n)\big(\nabla \cdot s\dd\xi^F\big)
\\ & \quad +\medint\int_0^t\medint\int_{\partial U} \Psi_n(x,\rho_n)\big(\nu\cdot s\dd\xi^F\big).
\end{align*}
Since it is a consequence of the bounds on $\sigma_n$ and the compact support of $\psi$ that the $\Psi_n$ are bounded independently of $n$, it follows from the boundedness of $s$, the Burkh\"older--Davis--Gundy inequality, and the conservation of mass \eqref{e_010} that for every $p\in[2,\infty)$ there exists $c\in(0,\infty)$ depending on $p$ and $\psi$ but independent of $n$ such that, for every $s\leq t\in[0,T]$,
\begin{equation}\label{exis_00}\E\big[\abs{M^{n,\psi}_t-M^{n,\psi}_s}^p\big]\leq c(s-t)^\frac{p}{2}\Big(\langle\xi^F\rangle_1^\frac{p}{2}\E\big[\langle\rho_0\rangle_U^\frac{p}{2}\big]+\langle\xi^F\rangle_1^\frac{p}{2}+\langle \nabla\cdot s\xi^F\rangle_1^\frac{p}{2}\Big).\end{equation}
If $p\in(2,\infty)$ it follows from the quantified Kolmogorov's continuity criterion (see \cite[Corollaries A.10, A.11]{FrizVictoir}) that the laws of the $\{M^{n,\psi}_t\}_{n\in\N}$ are tight on $\C^{0,\gamma}([0,T])$ for every $\gamma\in(\nicefrac{1}{2}-\nicefrac{1}{p})$.

\textbf{Applying the Skorokhod representation theorem.}  Let $\{\psi_j\}_{j\in\N}$ be a countable dense subset of $\C^\infty_c(\overline{U}\times(0,\infty))$ in the $H^s_{\textrm{loc}}(U\times(0,\infty))$-topology for some $s>\frac{d+3}{2}$, where $s$ is chosen large enough to guarantee the compact Sobolev embbedding of $H^s_{\textrm{loc}}(U\times(0,\infty))$ into $\C^1_{\textrm{loc}}(U\times(0,\infty))$.  For every $j\in\N$ let $M^{n,j}_t = M^{n,\psi_j}_t$ and observe from the equation that, for the kinetic function $\chi_n$ of $\rho_n$, $\P$-a.s.\ for every $t\in[0,T]$,
\begin{align}\label{6_29}
& M^{n,j}_t  = -\medint\int_\R\medint\int_U \chi_{n,r}\psi_j(x,\eta)\Big|_{r=0}^{r=t}-\medint\int_0^t\medint\int_U (\nabla_x\psi_j)(x,\rho_n) \cdot a\nabla\phi(\rho_n)-\medint\int_0^t\medint\int_\R\medint\int_U (\partial_\eta\psi_j) q_n
\\ \nonumber &  -\frac{\langle\xi^F\rangle_1}{2}\medint\int_0^t\medint\int_U\Big(\sigma_n'(\rho_n)^2(\nabla_x\psi_j)(x,\rho_n)\cdot a \nabla \rho_n+\sigma_n(\rho_n)\sigma_n'(\rho_n)s(\nabla\cdot s^t)\cdot (\nabla_x\psi_j)(x,\rho_n)\Big)
\\ \nonumber & +\frac{1}{2}\medint\int_0^t\medint\int_U \Big(\langle\xi^F\rangle_1\sigma(\rho_n)\sigma'(\rho_n)(\partial_\eta\psi_j)(x,\rho_n)\nabla\rho_n\cdot s(\nabla\cdot s^t)+\langle \nabla\cdot s\xi^F\rangle_1(\partial_\eta \psi_j)(x,\rho_n)\sigma_n^2(\rho_n)\Big).
\end{align}
Furthermore, using Proposition~\ref{prop_kcc}, for every $n\in\N$, for $\Sigma_n$ the unique function satisfying $\Sigma_n(1)=0$ and $\Sigma'_n(\eta) = \sigma'_n(\eta)^2$, let $L(\rho_n)\in\C^{0,\gamma}([0,T];H^1(U))$ be defined by
\[L_n(\rho_n)_t = \medint\int_0^t\big(\phi(\rho_n)+\frac{\langle\xi^F\rangle_1}{2}\Sigma_n(\rho_n)-\langle\phi(\rho_n)+\frac{\langle\xi^F\rangle_1}{2}\Sigma_n(\rho_n)\rangle_U \big).\]
For every $n\in\N$ let $X_n$ be the random variable
\[X_n = (\rho_n, q_n, (M^{n,j}_t)_{j\in\N}, L_n(\rho_n)),\]taking values in the space, for any $\gamma\in(\nicefrac{1}{2}-\frac{1}{2p})$,
\[\overline{X} =  \underline{X}\times \mathcal{M}_+(U\times\R\times[0,T])\times\C^{0,\gamma}([0,T])^{\N}\times\C^{0,\gamma}([0,T];H^1(U)),\]
for the space
\[\underline{X}=L^2(U\times[0,T])\cap L^2([0,T];H^1(U))\cap \C^{0,\gamma}([0,T];H^{-1}(U)),\]
where $\underline{X}$ and $\overline{X}$ are equipped with the intersection and product topologies induced by the strong topology on $L^2(U\times[0,T])$, the weak topology on the space of nonnegative Radon measures $\mathcal{M}_+(U\times\R\times[0,T])$, the weak topology on $L^2([0,T];H^1(U))$, and the topology of component-wise convergence in the strong norm on $\C^{0,\gamma}([0,T])^\N$ induced by the metric
\[D_\gamma((f_k)_{k\in\N},(g_k)_{k\in\N})=\textstyle\sum_{k=1}^\infty2^{-k}\frac{\norm{f_k-g_k}_{\C^{0,\gamma}([0,T])}}{1+\norm{f_k-g_k}_{\C^{0,\gamma}([0,T])}}.\]
We will apply Lemma~\ref{lemma_weak} to construct a probabilistically strong solution.  For this, let $(n_k)_{k\in\N}$ and $(m_k)_{k\in\N}$ be two subsequences satisfying  $n_k,m_k\rightarrow\infty$ as $k\rightarrow\infty$, and consider the laws of
\[(X_{n_k},X_{m_k},B,\rho_0)\;\;\textrm{on}\;\;\overline{Y}=\overline{X}\times\overline{X}\times\C^{0,\gamma}([0,T];\R^d)^\N\times H^1(U),\]
for the infinite-dimensional Brownian motion $B=(B^l)_{l\in\N}$ of Assumption~\ref{assume_n}.  Propositions~\ref{prop_e}, \ref{prop_kolm}, and \ref{prop_kcc} with the tightness of the martingale terms shown above and the Aubin--Lions--Simon lemma \cite{Aubin,pLions,Simon} (see specifically \cite[Theorem~5]{Simon}) prove that the laws of $(X_{n_k}, X_{m_k},B,\rho_0)_{k\in\N}$ on $\overline{Y}$ are tight.  It then follows from Prokhorov's theorem (see, for example, Billingsley \cite[Chapter~1, Theorem~5.1]{Bil1999}) that, after passing to a subsequence still denoted $k\rightarrow\infty$, there exists probability measure $\mu$ on $\overline{Y}$ such that, as $k\rightarrow\infty$,
\[(X_{n_k},X_{m_k},B,\rho_0)\rightarrow\mu\;\;\textrm{in law on $\overline{Y}$.}\]
Since the space $\overline{X}$ is separable, the space $\overline{Y}$ is separable and it follows heuristically from the Skorokhod representation theorem (see, for example, \cite[Chapter~1, Theorem~6.7]{Bil1999}), and specifically from Jakubwoski \cite[Theorem~2]{Jak1997} to overcome the fact that the above weak topologies are not metrizable, that there exists an auxiliary probability space $(\tilde{\O},\tilde{\F},\tilde{\P})$ and $\overline{Y}$-valued random variables $(\tilde{Y}_k,\tilde{Z}_k,\tilde{\beta}_k,\tilde{\rho}_{0,k})_{k\in\N}$ and a $\overline{Y}$-valued random variable $(\tilde{Y},\tilde{Z},\tilde{\beta},\tilde{\rho}_0)$ on $\tilde{\O}$ such that, for every $k\in\N$,
\begin{equation}\label{6_32}(\tilde{Y}_k,\tilde{Z}_k,\tilde{\beta}_k,\tilde{\rho}_{0,k})=(X_{n_k},X_{m_k},B,\rho_0)\;\;\textrm{in law on}\;\;\overline{Y},\end{equation}
such that
\begin{equation}\label{6_34}(\tilde{Y},\tilde{Z},\tilde{\beta},\tilde{\rho}_0)=\mu\;\;\textrm{in law on $\overline{Y}$,}\end{equation}
and such that, $\tilde{\P}$-a.s.\ as $k\rightarrow\infty$,
\begin{equation}\label{6_35}\tilde{Y}_k\rightarrow \tilde{Y},\;\tilde{Z}_k\rightarrow \tilde{Z},\;\tilde{\beta}_k\rightarrow\tilde{\beta},\;\textrm{and}\;\tilde{\rho}_{0,k}\rightarrow\tilde{\rho}_0\;\textrm{in $\overline{X}$, in $\C^{0,\gamma}([0,T];\R^d)^\N$, and in $H^1(U)$.}\end{equation}
We will now show that $\tilde{Y}=\tilde{Z}$ almost surely on $\overline{X}$ with respect to $\tilde{\P}$. We have using the definition of $\overline{X}$ that, for every $k\in\N$, there exists $\tilde{\rho}_k\in L^{2p}(\tilde{\O};L^{2}([0,T];H^1(U)))$, $\tilde{q}_k\in L^1(\tilde{\O};\mathcal{M}(U\times(0,\infty)\times[0,T]))$, and $(\tilde{M}^{k,j})_{j\in\N}\in L^1(\tilde{\O};\C^{0,\gamma}([0,T])^\N)$, and $\tilde{L}_k\in L^1(\tilde{\O};\C^{0,\gamma}([0,T])^\N)$ such that
\[\tilde{Y}_k = (\tilde{\rho}_k,\tilde{q}_k,(\tilde{M}^{k,j})_{j\in\N},\tilde{L}_k),\]
and that
\[\tilde{Y} = (\tilde{\rho},\tilde{q},(\tilde{M}^j)_{j\in\N}, \tilde{L}),\]
and similarly for $\tilde{Z}_k$ and $\tilde{Z}$.  It remains to characterize the functions, measures, and paths defining these quantities.

\textbf{Preservation of mass}:  It is a consequence of the $\tilde{\P}$-a.s.\ convergence of the $\tilde{\rho}_k\rightarrow\tilde{\rho}$ in $\underline{X}$ and the $\tilde{\rho}_{0,k}\rightarrow\tilde{\rho}_0$ in $H^1(U)$ that $\tilde{\rho}$ is $\tilde{\P}$-a.s.\ nonnegative and that, for every $t\in[0,T]$,
\[\medint\fint_U\tilde{\rho}(\cdot,t) = \abs{U}^{-1}\norm{\tilde{\rho}(\cdot,t)}_{L^1(U)}=\langle\tilde{\rho}_0\rangle_U,\]
which completes the proof of mass preservation.

\textbf{The kinetic measures $\tilde{q}$}.  For every nonnegative $\psi\in\C^\infty_c(\overline{U}\times(0,\infty))$, it follows from \eqref{6_32}, $\tilde{\rho}_k\in L^2([0,T];H^1(U))$, and the regularity property \eqref{d_s3} that
\begin{align*}
& \tilde{\P}\big[\medint\int_0^T\medint\int_\R\medint\int_U \psi \tilde{q}_k-\medint\int_0^T\medint\int_U \psi(x,\tilde{\rho}_k)\phi'(\tilde{\rho}_k)\big(\nabla\tilde{\rho}_k\cdot a\nabla\tilde{\rho}_k\big)<0\big]
\\ & = \P\big[\medint\int_0^T\medint\int_\R\medint\int_U \psi q_{n_k}-\medint\int_0^T\medint\int_U \psi(x,\rho_{n_k})\phi'(\rho_{n_k})\big(\nabla\rho_{n_k}\cdot a\nabla\rho_{n_k}\big)<0\big] = 0,
\end{align*}
from which it follows that the measures $\tilde{q}_k$ are $\tilde{\P}$-a.s.\ kinetic measures for the $\tilde{\rho}_k$ in the sense of Definition~\ref{d_measure} and Definition~\ref{sol_def}.  It is then a consequence of \eqref{6_35}, the symmetry of $a$, and the equality
\begin{align*}
\medint\int_0^T\medint\int_U\psi(x,\tilde{\rho}_k)\phi'(\tilde{\rho}_k)\big(\nabla\tilde{\rho}_k\cdot a\nabla\tilde{\rho}_k\big) & = -\medint\int_0^T\medint\int_U \psi(x,\tilde{\rho}_k)\phi'(\tilde{\rho}_k)\big(\nabla\tilde{\rho}\cdot a\nabla\tilde{\rho}\big)
\\ & \quad +2\medint\int_0^T\medint\int_U\psi(x,\tilde{\rho}_k)\phi'(\tilde{\rho}_k)\big(\nabla\tilde{\rho}_k\cdot a\nabla\tilde{\rho}\big)
\\ & \quad + \medint\int_0^T\medint\int_U\psi(x,\tilde{\rho}_k)\phi'(\tilde{\rho}_k)\big(\nabla(\tilde{\rho}_k-\tilde{\rho})\cdot a\nabla(\tilde{\rho}_k-\tilde{\rho})\big),
\end{align*}
that we may pass to the limit $k\rightarrow\infty$ using the dominated convergence theorem for the first two terms, and the nonnegativity of the final term, which can be discarded.  Therefore,
\begin{equation}\label{exis_1} \tilde{\P}\big[\medint\int_0^T\medint\int_0^\infty\medint\int_U\medint\int_\R \psi \tilde{q}-\medint\int_0^T\medint\int_U \psi(x,\tilde{\rho})\phi'(\tilde{\rho})\big(\nabla\tilde{\rho}\cdot a\nabla\tilde{\rho}\big)<0\big]=0,\end{equation}
which proves that $\tilde{q}$ is $\tilde{\P}$-a.s.\ a kinetic measure for $\tilde{\rho}$ in the sense of Definition~\ref{d_measure} and Definition~\ref{sol_def}.

\textbf{The paths $\tilde{L}$}.  It follows from the smoothness of the $\Sigma_n$, owing to the regularity of the $\sigma_n$, and the nondegeneracy of $\phi$ in Assumption~\ref{assume_d} that the map
\[v\in L^2([0,T];H^1(U))\rightarrow L_n(v) = \medint\int_0^t\big(\phi(v)+\frac{\langle\xi^F\rangle_1}{2}\Sigma_n(v)\big)\in \C([0,T];L^2(U)),\]
is continuous, and therefore arguing similarly to the above we have $\tilde{\P}$-a.s.\ for every $k\in\N$ that
\begin{equation}\label{exis_000}\tilde{L}_k(t) = \medint\int_0^t\big(\phi(\tilde{\rho}_k)+\frac{\langle\xi^F\rangle_1}{2}\Sigma_{n_k}(\tilde{\rho}_k)-\langle \phi(\tilde{\rho}_k)+\frac{\langle\xi^F\rangle_1}{2}\Sigma_{n_k}(\tilde{\rho}_k)\rangle_U\big)\in\C^{0,\gamma}([0,T];H^1(U)).\end{equation}
Furthermore, due to the smoothness of $\Sigma_n$, the map
\[v\in L^2([0,T];H^1(U))\rightarrow\Sigma_n(v)\in L^2([0,T];H^1(U)),\]
is continuous, from which it follows from Proposition~\ref{prop_h1}, \eqref{6_32}, and $\tilde{\rho}_{0,k}\in L^p(\tilde{\O};L^2(U))$ that, for some $c\in(0,\infty)$,
\begin{align}\label{exist_0011}
& \tilde{\E}\big[\langle\xi^F\rangle_1\langle\rho_0\rangle_U\medint\int_0^T\medint\int_U\abs{\Sigma_{n_k}(\tilde{\rho}_k)}\big] \leq c\Big(\norm{\rho_0}^2_{L^2(U)}+T\norm{s(\nabla\cdot s^t)}^2_{L^\infty(U)}\langle\xi^F\rangle_1^2\Big)
 \\ \nonumber & \quad +c\Big((1+T)\langle\xi^F\rangle_1\E\big[\langle\rho_0\rangle_{U}\big]+\langle\xi^F\rangle_1\E\big[\langle\rho_0\rangle^2_{U}\big]\Big),
 \end{align}
is uniformly bounded in $k\in\N$.  It is then a consequence of Fatou's lemma, the definition of $\Sigma_n$, and the convergence \eqref{6_35} that, $\tilde{\P}$-a.s.\ for every $t\in[0,T]$, after passing to a random subsequence $k\rightarrow\infty$,
\[\langle\tilde{\rho}_0\rangle_U\medint\int_0^t\medint\int_U\log(\tilde{\rho}\vee 1)\leq \liminf_{k\rightarrow\infty}\langle\tilde{\rho}_{k,0}\rangle_U\medint\int_0^t\medint\int_U\Sigma_{n_k}(\tilde{\rho}_k\vee 1),\]
and
\[\langle\tilde{\rho}_0\rangle_U\medint\int_0^t\medint\int_U\log(\tilde{\rho}\wedge 1)\geq\limsup_{k\rightarrow\infty}\langle\tilde{\rho}_{k,0}\rangle_U\medint\int_0^t\medint\int_U\Sigma_{n_k}(\tilde{\rho}_k\wedge 1),\]
from which it follows from \eqref{exist_0011} that $\langle\tilde{\rho}_0\rangle_U\log(\tilde{\rho})$ is $\tilde{\P}$-a.s.\ $L^1$-integrable on $U\times[0,T]$.  The dominated convergence theorem and the definition of $\Sigma_{n_k}$ then prove that, $\tilde{\P}$-a.s.\ as $k\rightarrow\infty$,
\[\langle\tilde{\rho}_0\rangle_U\Sigma_{n_k}(\tilde{\rho})\rightarrow\langle\tilde{\rho}_0\rangle_U\log(\tilde{\rho})\;\;\textrm{strongly in $L^1(U\times[0,T])$},\]
and since, $\tilde{\P}$-a.s.\ on the event $\{\tilde{\rho}_0\neq 0\}$ the functions $\int_0^t\Sigma_{n_k}(\tilde{\rho}_k)$ are uniformly bounded in $L^2(U)$ in $k\in\N$ and $t\in[0,T]$, and since for every $t\in[0,T]$ these functions converge a.e.\ on $U$ to $\int_0^t\log(\tilde{\rho})$, it follows that, for every $t\in[0,T]$,
\[\lim_{k\rightarrow\infty} \medint\int_0^t\medint\int_U\Sigma_{n_k}(\tilde{\rho}_k)=\medint\int_0^t\medint\int_U\log(\tilde{\rho}).\]
Since the convergence of the remaining terms in the definition of $\tilde{L}_k$ follow from the strong $L^2$-converge of the $\tilde{\rho}_k$ to $\tilde{\rho}$, we have $\tilde{\P}$-a.s.\ on the event $\{\tilde{\rho}_0\neq 0\}$ after passing to the limit $k\rightarrow\infty$ in \eqref{exis_000} that
\begin{equation}\label{exis_0000} \tilde{L}(t) =\medint\int_0^t\big(\phi(\tilde{\rho})+\frac{\langle\xi^F\rangle_1}{2}\log(\tilde{\rho})-\langle \phi(\tilde{\rho})+\frac{\langle\xi^F\rangle_1}{2}\log(\tilde{\rho})\rangle_U\big)\in\C^{0,\gamma}([0,T];H^1(U)),\end{equation}
where this quantity is understood to be zero on the event $\{\tilde{\rho}_0=0\}$.

\textbf{Convergence of the $\sigma_{n_k}\sigma'_{n_k}$ and nonvanishing of $\tilde{\rho}$ in $U$}.  It follows from \eqref{6_35} that there $\tilde{\P}$-a.s.\ exists a random subsequence $k\rightarrow\infty$ such that
\[\tilde{\rho}_k\rightarrow\tilde{\rho}\;\;\textrm{almost everywhere on $U\times[0,T]$.}\]
Since the integrability of $\langle\tilde{\rho}_0\rangle_U\log(\tilde{\rho})$ implies $\tilde{\P}$-a.s.\ on the event $\{\tilde{\rho}_0\neq 0\}$ that the set $\{(x,t)\in U\times[0,T]\colon\tilde{\rho}(x,t)=0\}$ has zero measure, it is a consequence of the convergence \eqref{exi_1} that $\tilde{\P}$-a.s., along the same random sequence $k\rightarrow\infty$, almost everywhere on $U\times[0,T]$ and therefore in $L^p(U\times[0,T])$ for every $p\in[1,\infty)$,
\begin{equation}\label{exis_102}\sigma_{n_k}(\tilde{\rho}_k)\sigma_{n_k}'(\tilde{\rho}_k)\rightarrow\frac{1}{2},\end{equation}
which completes the analysis of this term.

\textbf{The nonvanishing of $\tilde{\rho}$ on $\partial U$}.  We first observe that the map
\[v\in L^2([0,T];H^1(U))\rightarrow\medint\int_0^Tv(\cdot,s)\ds\in H^1(U),\]
is continuous with respect to the strong topologies of $H^1(U)$ and $L^2([0,T];H^1(U))$.  We then observe that the trace operator $\textrm{Tr}$ commutes with this map, in the sense that
\begin{equation}\label{exist_160}\textrm{Tr}\big(\medint\int_0^Tv(\cdot,s)\ds\big) = \medint\int_0^T\textrm{Tr}(v)\ds\;\;\textrm{in}\;\;L^2(\partial U),\end{equation}
which follows from the smoothness of $U$, the continuity of the map $\textrm{Tr}\colon H^1(U)\rightarrow L^2(\partial U)$ (see, for example, \cite[Section~5.5, Theorem~1]{Eva2010}), and the fact that if $v$ is smooth then $\textrm{Tr}(v) = v|_{\partial U}$.  Similarly, for all smooth functions $F\colon\R\rightarrow\R$ we have that $\textrm{Tr}(F(v)) = F(\textrm{Tr}(v))$.  Finally, we observe that the trace operator preserves order, in the sense that if $v_1\leq v_2\in H^1(U)$ then $\textrm{Tr}(v_1)\leq \textrm{Tr}(v_2)$, which follows immediately from the identity $\textrm{Tr}(v) = v|_{\partial U}$ for every smooth function $v\in \C^\infty(U\times[0,T])$.  Let $M\in[2,\infty)$ be arbitrary and let $\tilde{\rho}_M = \big((\tilde{\rho}\wedge 1)\vee \nicefrac{1}{M}\big)$.  Since the $L^2([0,T];H^1(U))$-regularity of $\tilde{\rho}$ implies the $L^2([0,T];H^1(U))$-regularity of $\log(\tilde{\rho}_M)$ for every $M\in[2,\infty)$, we have using the order preservation of the trace operator that
\[\norm{\textrm{Tr}\big(\medint\int_0^T\log(\tilde{\rho}_M)\big)}_{L^2(U)}\leq \norm{\textrm{Tr}\big(\medint\int_0^T\log(\tilde{\rho})\big)}_{L^2(U)},\]
where it follows from \eqref{exis_0000} and the preservation of mass that the righthand side is $\tilde{\P}$-a.s.\ finite on the event $\{\rho_0\neq 0\}$.  It is then a consequence of Chebyshev's inequality on $\partial U$, \eqref{exist_160}, and the commutativity of the trace operator with smooth functions that
\begin{equation}\label{exist_175}\abs{\{(x,t)\in \partial U\times[0,T]\colon \textrm{Tr}(\tilde{\rho}_M)=\nicefrac{1}{M}\}}\leq \abs{\log(\nicefrac{1}{M})}^{-2}\norm{\textrm{Tr}\big(\medint\int_0^T\log(\tilde{\rho})\big)}^2_{L^2(U)}.\end{equation}
This completes the proof of \eqref{s_log} on $\partial U$ in the sense of Remark~\ref{remark_sol}.

\textbf{The Brownian paths $\tilde{\beta}_k$ and $\tilde{\beta}$}.  Let $F\colon\overline{Y}\rightarrow\R$ be a continuous function and let $\tilde{\beta}_k=(\tilde{\beta}^{k,l})_{l\in\N}$ and let $\tilde{\beta}=(\tilde{\beta}^l)_{l\in\N}$.  It is a consequence of \eqref{6_32} that, for every $s\leq t\in[0,T]$, $k,l\in\N$, for the restriction $\tilde{Y}_k|_{[0,s]}$ to the interval $[0,s]$,
\begin{align}\label{6_37}
& \tilde{\E}\big[F\big(\tilde{Y}_k|_{[0,s]},\tilde{Z}_k|_{[0,s]}, \tilde{\beta}_k|_{[0,s]}\big)\big(\tilde{\beta}^{k,l}_t-\tilde{\beta}^{k,l}_s\big)  \big]
\\ \nonumber & =\E\big[F\big(X_{n_k}|_{[0,s]},X_{m_k}|_{[0,s]}, B|_{[0,s]}\big)\big(B^l_t-B^l_s\big)\big]=0.
\end{align}
It follows similarly that, for every $k,l,m\in\N$, $i,j\in\{1,\ldots,d\}$, and $s\leq t\in[0,T]$, for $\beta^{k,l}=(\beta^{k,l}_i)_{i\in\{1,\ldots,d\}}$,
\[\E\big[F\big(\tilde{Y}_k|_{[0,s]},\tilde{Z}_k|_{[0,s]}, \tilde{\beta}_k|_{[0,s]}\big)\big(\tilde{\beta}^{k,l}_{i,t}\tilde{\beta}^{k,m}_{j,t}-\tilde{\beta}^{k,l}_{i,s}\tilde{\beta}^{k,m}_{j,s}-\delta_{ij}\delta_{lm}(t-s)\big)  \big]=0,\]
for the Kronecker delta $\delta_{ij}$.  It is then a consequence of Levy's characterization of Brownian motion (see, for example, \cite[Chapter~4, Theorem~3.6]{RevYor1999}) that the $\tilde{\beta}_k$ are independent $d$-dimensional Brownian motions.  Since then, for every $t\in[0,T]$, the random variables $(\tilde{\beta}_k)_{k\in\N}$ are uniformly integrable and since, after passing to the limit $k\rightarrow\infty$ using \eqref{6_35} and \eqref{6_37}, for every $s\leq t\in[0,T]$ and $l\in\N$,
\begin{equation}\label{6_38}\tilde{\E}\big[F\big(\tilde{Y}|_{[0,s]},\tilde{Z}|_{[0,s]}, \tilde{\beta}|_{[0,s]}\big)\big(\tilde{\beta}^l_t-\tilde{\beta}^l_s\big)\big]=0,\end{equation}
we have that, for every $l,m\in\N$, $i,j\in\{1,\ldots,d\}$, and $s\leq t\in[0,T]$,
\begin{equation}\label{6_39}\tilde{\E}\big[F\big(\tilde{Y}|_{[0,s]},\tilde{Z}|_{[0,s]}, \tilde{\beta}|_{[0,s]}\big)\big(\tilde{\beta}^l_{i,t}\tilde{\beta}^m_{j,t}-\tilde{\beta}^l_{i,s}\tilde{\beta}^m_{j,s}-\delta_{ij}\delta_{lm}(t-s)\big)\big]=0.\end{equation}
Since $\tilde{\P}$-almost surely $\tilde{\beta}^l\in\C([0,T];\R^d)$ for every $l\in\N$, it follows from \eqref{6_38}, \eqref{6_39}, and Levy's characterization of Brownian motion that $(\tilde{\beta}^l)_{l\in\N}$ are standard, independent $d$-dimensional Brownian motions with respect to the filtration $\mathcal{G}_t=\sigma(\tilde{Y}|_{[0,t]},\tilde{Z}|_{[0,t]}, \tilde{\beta}|_{[0,t]})$.  It follows from the continuity and uniform integrability of the Brownian motion in time that $\tilde{\beta}$ is a Brownian motion with respect to the augmented filtration $(\overline{\mathcal{G}}_t)_{t\in[0,T]}$ of $(\mathcal{G}_t)_{t\in[0,T]}$, which is the smallest complete, right-continuous filtration containing $(\mathcal{G}_t)_{t\in[0,T]}$.

\textbf{The paths $(\tilde{M}^j)_{j\in\N}$ are $\overline{\mathcal{G}}_t$-martingales.}  Let $F\colon\overline{Y}\rightarrow\R$ be continuous and let $k,l\in\N$.  It follows from \eqref{6_32} that, for every $s\leq t\in[0,T]$,
\begin{align*}
& \tilde{\E}\big[F\big(\tilde{Y}_k|_{[0,s]},\tilde{Z}_k|_{[0,s]}, \tilde{\beta}_k|_{[0,s]}\big)\big(\tilde{M}^{k,l}_t-\tilde{M}^{k,l}_s\big)\big]
\\ &=\E\big[F\big(X_{n_k}|_{[0,s]},X_{m_k}|_{[0,s]}, B|_{[0,s]}\big)\big(M^{n_k,\psi_l}_t-M^{n_k,\psi_l}_s\big)\big]=0.
\end{align*}
After passing to the limit $k\rightarrow\infty$, where \eqref{exis_00} and \eqref{6_32} prove that the $\tilde{M}^{k,l}_t$ are uniformly bounded in $L^p(\O\times[0,T])$ for every $p\in[1,\infty)$ and hence uniformly integrable,
\begin{equation}\label{6_40}\tilde{\E}\big[F\big(\tilde{Y}|_{[0,s]},\tilde{Z}|_{[0,s]}, \tilde{\beta}|_{[0,s]}\big)\big(\tilde{M}^j_t-\tilde{M}^j_s\big)\big]=0.\end{equation}
This proves that $(\tilde{M}^j)_{t\in[0,\infty)}$ satisfies the martingale property with respect to $(\mathcal{G}_t)_{t\in[0,T]}$.  It then follows from the continuity and uniform integrability of the $(\tilde{M}^l)_{l\in\N}$ that the martingale terms are continuous martingales with respect to the augmentation $(\overline{\mathcal{G}}_t)_{t\in[0,T]}$.

\textbf{The $\tilde{M}$ are stochastic integrals with respect to $\tilde{\beta}$.}  Let $F\colon\overline{Y}\rightarrow\R$ be a continuous function.  It follows from \eqref{6_32} that, for every $s\leq t\in[0,T]$, $k,l,m\in\N$, and $i\in\{1,\ldots,d\}$,
\begin{align*}
& \tilde{\E}\big[F\big(\tilde{Y}_k|_{[0,s]},\tilde{Z}_k|_{[0,s]}, \tilde{\beta}_k|_{[0,s]}\big)\big(\tilde{M}^{k,l}_t\tilde{\beta}^{k,m}_{i,t}-\tilde{M}^{k,l}_s\tilde{\beta}^{k,m}_{i,s}
\\ & \quad \quad -\big(\medint\int_s^t\medint\int_U\sigma_{n_k}(\tilde{\rho}_k)(s^t\nabla_x\psi_l)_i(x,\tilde{\rho}_k)f_m+\medint\int_s^t\medint\int_U\sigma_{n_k}(\tilde{\rho}_k)(\partial_\eta\psi_l)(x,\tilde{\rho}_k)(s^t\nabla\tilde{\rho}_k)_if_m\big)\big)\big]
\\ & =\E\big[F\big(X_{n_k}|_{[0,s]},X_{m_k}|_{[0,s]}, B|_{[0,s]}\big)\big(M^{n_k,\psi_l}_tB^m_{i,t}-M^{n_k,\psi_l}_sB^m_{i,s}
\\ &  \quad\quad -\big(\medint\int_s^t\medint\int_U\sigma_{n_k}(\tilde{\rho}_k)(s^t\nabla_x\psi_l)_i(x,\tilde{\rho}_k)f_m+\medint\int_s^t\medint\int_U\sigma_{n_k}(\tilde{\rho}_k)(\partial_\eta\psi_l)(x,\tilde{\rho}_k)(s^t\nabla\tilde{\rho}_k)_if_m\big)\big)\big]=0.
\end{align*}
Since it follows as above that the $\tilde{M}^{k,l}_t\tilde{\beta}^{k,m}_{i,t}$  are uniformly integrability in $k$ for every time $t\in[0,T]$, after passing to the limit $k\rightarrow\infty$ using the convergence \eqref{6_35} and the definition of the $\sigma_n$,
\begin{align}\label{6_41}
& \tilde{\E}\big[F\big(\tilde{Y}|_{[0,s]},\tilde{Z}|_{[0,s]}, \tilde{\beta}|_{[0,s]}\big)\big(\tilde{M}^{l}_t\tilde{\beta}^{m}_{i,t}-\tilde{M}^{l}_s\tilde{\beta}^{m}_{i,s}
\\ \nonumber & \quad \quad -\big(\medint\int_s^t\medint\int_U\sqrt{\tilde{\rho}}(s^t\nabla_x\psi_l)_i(x,\tilde{\rho})f_m+\medint\int_s^t\medint\int_U\sqrt{\tilde{\rho}_k}(\partial_\eta\psi_l)(x,\tilde{\rho})(s^t\nabla\tilde{\rho})_if_m\big)\big)\big].
\end{align}
We therefore conclude from \eqref{6_41} that, for every $l,m\in\N$ and $i\in\{1,\ldots,d\}$,
\begin{equation}\label{6_42}\tilde{M}^l_t\tilde{\beta}^m_{i,t}-\big(\medint\int_0^t\medint\int_U\sqrt{\tilde{\rho}}(s^t\nabla_x\psi_l)_i(x,\tilde{\rho})f_m+\medint\int_0^t\medint\int_U\sqrt{\tilde{\rho}_k}(\partial_\eta\psi_l)(x,\tilde{\rho})(s^t\nabla\tilde{\rho})_if_m\big)\;\;\textrm{is a $\mathcal{G}_t$-martingale.}\end{equation}
It then follows from the continuity of the process in time and the uniform integrability that the process \eqref{6_42} is also a continuous $\overline{\mathcal{G}}_t$-martingale.

A virtually identical proof shows that, for every $l\in\N$, the process
\begin{equation}\label{6_43} (\tilde{M}^l_t)^2-\medint\int_0^t\textstyle\sum_{m=1}^\infty\textstyle\sum_{i=1}^d\big(\medint\int_0^t\medint\int_U\sqrt{\tilde{\rho}}(s^t\nabla_x\psi_l)_i(x,\tilde{\rho})f_m+\medint\int_0^t\medint\int_U\sqrt{\tilde{\rho}_k}(\partial_\eta\psi_l)(x,\tilde{\rho})(s^t\nabla\tilde{\rho})_if_m\big)^2,\end{equation}
is a continuous $\overline{\mathcal{G}}_t$-martingale.  It then follows \eqref{6_42}, \eqref{6_43}, and an explicit calculation using the quadratic variation and covariation with the Brownian motion that, for every $l\in\N$ and $t\in[0,T]$,
\begin{equation}\label{6_44}\tilde{\E}\big[\big(\tilde{M}^l_t-\medint\int_0^t\medint\int_U\sqrt{\tilde{\rho}}(\nabla_x\psi_l)(x,\tilde{\rho})\cdot s\dd\tilde{\xi}^F+\medint\int_0^t\medint\int_U\sqrt{\tilde{\rho}}(\partial_\eta\psi_l)(x,\tilde{\rho})\nabla\tilde{\rho}\cdot s\dd\tilde{\xi}^F\big)^2\big]=0,\end{equation}
for the noise $\tilde{\xi}^F$ defined as in Assumption~\ref{assume_n} by $\tilde{\beta}$.  It follows from \eqref{6_40} and \eqref{6_44} that the quadratic variation of the martingale appearing under the square in \eqref{6_44} vanishes, and therefore that, $\tilde{\P}$-a.s.\ for every $l\in\N$ and $t\in[0,T]$,
\begin{equation}\label{6_45} \tilde{M}^j_t =\medint\int_0^t\medint\int_U\sqrt{\tilde{\rho}}(\nabla_x\psi_l)(x,\tilde{\rho})\cdot s\dd\tilde{\xi}^F+\medint\int_0^t\medint\int_U\sqrt{\tilde{\rho}}(\partial_\eta\psi_l)(x,\tilde{\rho})\nabla\tilde{\rho}\cdot s\dd\tilde{\xi}^F,\end{equation}
which completes the characterization of the martingale terms.

\textbf{Recovering the kinetic form of the equation}.  We first observe that, $\P$-a.s.\ for every $k\in\N$, $t\in[0,T]$, and $\psi\in\C^\infty(\overline{U})$,
\begin{align}\label{nex_1}
 \medint\int_U\rho_{n_k}\psi\big|_{r=0}^{r=t} & = -\medint\int_U\nabla\psi\cdot a\nabla L_{n_k}(\rho_{n_k})_t+\medint\int_0^t\medint\int_U\sigma_{n_k}(\rho_{n_k})\nabla\psi\cdot\dd\xi^F
 \\ \nonumber & \quad -\frac{\langle\xi^F\rangle_1}{2}\medint\int_0^t\medint\int_U\sigma_{n_k}(\rho_{n_k})\sigma'_{n_k}(\rho_{n_k}) s(\nabla\cdot s^t)\cdot\nabla\psi.
 \end{align}
And, therefore, using the separability of $\C^\infty(\overline{U})$ in the $H^s(U)$-norm, \eqref{6_32}, and \eqref{6_34} that, $\tilde{\P}$-a.s.\ for every $k\in\N$ the $\tilde{\rho}_k$ satisfy \eqref{nex_1} for every $t\in[0,T]$ and $\psi\in\C^\infty(\overline{U})$.  After passing to the limit $k\rightarrow\infty$ using the convergence \eqref{6_35}, \eqref{exis_0000}, and \eqref{exis_102}, $\tilde{\P}$-a.s.\ for every $t\in[0,T]$ and $\psi\in\C^\infty(\overline{U})$,
\begin{equation}\label{nex_11}
 \medint\int_U\tilde{\rho}\psi\big|_{r=0}^{r=t} = -\medint\int_U\nabla\psi\cdot a\nabla L(\tilde{\rho})_t+\medint\int_0^t\medint\int_U\sqrt{\tilde{\rho}}\nabla\psi\cdot\dd\xi^F-\frac{\langle\xi^F\rangle_1}{4}\medint\int_0^t\medint\int_Us(\nabla\cdot s^t)\cdot\nabla\psi.
 \end{equation}
It is a consequence of \eqref{nex_11} and the $H^1(U)$-continuity of $L(\tilde{\rho})$ that, $\tilde{\P}$-a.s.\ for every $\psi\in\C^\infty(\overline{U})$,
\[t\in[0,T]\rightarrow\medint\int_U\tilde{\rho}(x,t)\psi(x)\dx\;\;\textrm{is continuous,}\]
and therefore that $\tilde{\P}$-a.s.\ the map
\[ t\in[0,T]\rightarrow\tilde{\rho}(\cdot,t)\in L^2(U)\;\;\textrm{is continuous w.r.t.\ the weak topology of $L^2(U)$.}\]
It follows similarly from the representation \eqref{6_29}, the convergence \eqref{6_35}, the kinetic measure \eqref{exis_1}, the identity \eqref{6_45}, and the compact support of $\psi_l$ on $U\times(0,\infty)$ that $\tilde{\P}$-a.s.\ there exists a random set $\mathcal{C}\subseteq [0,T]$ of full measure such that, for the kinetic function $\tilde{\chi}$ of $\tilde{\rho}$, $\tilde{\P}$-a.s.\ for every $t\in\mathcal{C}$ and $l\in\N$,
\begin{align}\label{6_46}
& \left.\medint\int_\R\medint\int_{U}\tilde{\chi}(x,\xi,r)\psi_l(x,\xi)\right|_{r=0}^{r=t} =-\medint\int_0^t\medint\int_{U}(\nabla\psi_l)(x,\tilde{\rho})\cdot a\nabla\phi(\tilde{\rho}) -\medint\int_0^t\medint\int_\R\medint\int_{U}\partial_\eta\psi_l\dd\tilde{q}
\\ \nonumber &  -\frac{\langle\xi^F\rangle_1}{4}\medint\int_0^t\medint\int_U\Big(\frac{1}{2}(\nabla_x\psi_l)(x,\rho_n)\cdot a \nabla \log(\tilde{\rho})+s(\nabla\cdot s^t)\cdot (\nabla_x\psi_l)(x,\tilde{\rho})\Big)
\\ \nonumber & +\frac{1}{2}\medint\int_0^t\medint\int_U \Big(\frac{\langle\xi^F\rangle_1}{2}(\partial_\eta\psi_l)(x,\tilde{\rho})\nabla\tilde{\rho}\cdot s(\nabla\cdot s^t)+\langle \nabla\cdot s\xi^F\rangle_1(\partial_\eta \psi_l)(x,\tilde{\rho})\tilde{\rho})\Big).
\\ \nonumber & + \medint\int_0^t\medint\int_U\sqrt{\tilde{\rho}}(\nabla_x\psi_l)(x,\tilde{\rho})\cdot s\dd\tilde{\xi}^F+\medint\int_0^t\medint\int_U\sqrt{\tilde{\rho}}(\partial_\eta\psi_l)(x,\tilde{\rho})\nabla\tilde{\rho}\cdot s\dd\tilde{\xi}^F.
\end{align}
Since every term on the righthand side of \eqref{6_46} is continuous in time except possibly the term containing $\tilde{q}$, it follows from the outer regularity of finite Radon measures on locally compact spaces that, for every $l\in\N$, the map
\begin{equation}\label{exis_130}t\in[0,T]\rightarrow \medint\int_\R\medint\int_U\tilde{\chi}(x,\xi,t)\psi_l(x,\xi)\;\;\textrm{is right continuous,}\end{equation}
where the possible points of discontinuity $t_*\in[0,T]$ are quantified by the integral
\[\medint\int_{\{t^*\}}\medint\int_\R\medint\int_U\partial_\eta\psi_l\dd\tilde{q}.\]
These integral vanish away from the random set
\[\mathcal{A} = \{t\in[0,T]\colon q(U\times\R\times\{t\})>0\}.\]
 Furthermore, while the maps \eqref{exis_130} are always right continuous at $t=0$, their value at zero is
 \[\medint\int_R\medint\int_U\overline{\chi}(\rho_0)\psi_l-\medint\int_{\{0\}}\medint\int_\R\medint\int_U\partial_\eta\psi_l\dd\tilde{q}.\]
 We therefore need to prove that $0\notin \mathcal{A}$ in order to guarantee that the maps \eqref{exis_130} are equal to $\int_\R\int_U\overline{\chi}(\rho_0)\psi_l$ at $t=0$.  We will first do this, and then establish that the set $\mathcal{A}$ is $\tilde{\P}$-a.s.\ empty.

 \textbf{Initial time continuity, $0\notin \mathcal{A}$}.  The density of the $\psi_l$ in $H^s(U)$, the fact that $\tilde{\P}$-a.s.\ we have $\tilde{\rho}\in L^{2p}([0,T];H^1(U))$, and a repetition of the methods used to prove Proposition~\ref{prop_zero} prove $\tilde{\P}$-a.s.\ that $\psi(\xi)=\xi$ is an admissible test function for \eqref{6_46} and therefore that, for every $t\in[0,T]\setminus \mathcal{C}$,
 \begin{align}\label{exis_131}
&  \frac{1}{2}\big(\norm{\tilde{\rho}(\cdot,t)}^2_{L^2(U)}-\norm{\tilde{\rho}_0}^2_{L^2(U)})+\tilde{q}(U\times\R\times[0,t])
 \\ \nonumber & =\medint\int_0^t\medint\int_U\sqrt{\tilde{\rho}}\nabla\tilde{\rho}\cdot s\dd\tilde{\xi}^F +\frac{\langle\xi^F\rangle_1}{4}\medint\int_0^t\medint\int_U s(\nabla\cdot s^t)\cdot \nabla\tilde{\rho}+\frac{1}{2}\medint\int_0^t\medint\int_U\langle \nabla\cdot s\xi^F\rangle_1\tilde{\rho}.
 \end{align}
Then, using the the $\tilde{\P}$-a.s.\ $H^1(U)$-regularity of $\tilde{\rho}_0$, we have from \eqref{nex_11} using the density of $\C^\infty(\overline{U})$ in $H^1(U)$ that, for every $t\in[0,T]$,
\begin{equation}\label{exis_132} \medint\int_U\big(\tilde{\rho}(\cdot,t)\tilde{\rho}_0-\tilde{\rho}_0^2\big) = -\medint\int_U\nabla\tilde{\rho}_0\cdot a \nabla L(\tilde{\rho})_t+\medint\int_0^t\medint\int_U\sqrt{\tilde{\rho}}\nabla\tilde{\rho}_0\cdot\dd\xi^F-\frac{\langle\xi^F\rangle_1}{4}\medint\int_0^t\medint\int_Us(\nabla\cdot s^t)\cdot\nabla\tilde{\rho}_0.\end{equation}
After subtracting \eqref{exis_132} from \eqref{exis_131} and applying the polarization identity, for every $t\in[0,T]\setminus\mathcal{C}$,
\begin{align}\label{exis_133}
&  \frac{1}{2}\norm{\tilde{\rho}(\cdot,t)-\tilde{\rho}_0}^2_{L^2(U)}+\tilde{q}(U\times\R\times[0,t])
 \\ \nonumber & =\medint\int_0^t\medint\int_U\sqrt{\tilde{\rho}}\nabla\tilde{\rho}\cdot s\dd\tilde{\xi}^F +\frac{\langle\xi^F\rangle_1}{4}\medint\int_0^t\medint\int_U s(\nabla\cdot s^t)\cdot \nabla\tilde{\rho}+\frac{1}{2}\medint\int_0^t\medint\int_U\langle \nabla\cdot s\xi^F\rangle_1\tilde{\rho}
 \\ \nonumber & \quad +2\medint\int_U\nabla\tilde{\rho}_0\cdot a \nabla L(\tilde{\rho})_t-2\medint\int_0^t\medint\int_U\sqrt{\tilde{\rho}}\nabla\tilde{\rho}_0\cdot\dd\xi^F+\frac{\langle\xi^F\rangle_1}{2}\medint\int_0^t\medint\int_Us(\nabla\cdot s^t)\cdot\nabla\tilde{\rho}_0.
 \end{align}
Since every term on the righthand side of \eqref{exis_133} is $\tilde{\P}$-a.s.\ continuous in time, after passing to the limit $t\rightarrow 0$ in $[0,T]\setminus \mathcal{C}$, we have using the outer regularity of finite Radon measures on locally compact spaces that, $\tilde{\P}$-a.s.,
\begin{equation}\label{exis_134} q(U\times\R\times\{0\})=0,\end{equation}
which completes the proof that $0\notin\mathcal{A}$.  It remains to show $\tilde{\P}$-a.s.\ that $\mathcal{A}=\emptyset$.

\textbf{The measure $\tilde{q}$ has no atoms in time, $\mathcal{A}=\emptyset$}.   For every $l\in\N$ we define the path $\langle \tilde{\chi},\psi_l\rangle_t = \int_\R\int_{U}\tilde{\chi}(x,\xi,t)\psi_l(x,\xi)$ and let $\tilde{q}_{\psi_l}$ be the measure defined by $\dd \tilde{q}_{\psi_l} = (\partial_\xi\psi_l)(x,\xi)\dd \tilde{q}$.  It then follows from the inner and outer regularity of finite Radon measures on locally compact spaces that the functions $\tilde{Q}^{\pm}_{\psi_l}\colon[0,T]\rightarrow\R$ defined by
\begin{equation}\label{exis_148} \tilde{Q}^+_{\psi_l}(t)=\tilde{q}_{\psi_l}(U\times(0,\infty)\times[0,t])\;\;\textrm{and}\;\;\tilde{Q}^-_{\psi_l}(t)=\tilde{q}_{\psi_l}(U\times(0,\infty)\times[0,t))\end{equation}
are $\tilde{\P}$-a.s.\ right- and left-continuous, satisfy $\tilde{Q}^+_{\psi_l}(0)=\tilde{Q}^-_{\psi_l}(0)=0$, and satisfy $\tilde{Q}^+_{\psi_l}(t)=\tilde{Q}^-_{\psi_l}(t)$ for every $t\in[0,T]\setminus\mathcal{A}$.  It then follows from \eqref{6_46} and \eqref{exis_148} that, $\tilde{\P}$-a.s.\ for every $l\in\N$, the functions $t\in\mathcal{C}\rightarrow\langle\tilde{\chi},\psi_l\rangle_t$ admit right- and left-continuous representatives $\langle\tilde{\chi},\psi_l\rangle^{\pm}_t$ on $[0,T]$. It follows from the nonnegativity of the solutions, the density of the $\psi_l$ in the strong $L^2(U\times[0,\infty))$-topology, and the definition of the kinetic function $\tilde{\chi}$ of $\tilde{\rho}$ that the functions $\tilde{\chi}^\pm$ defined by
\[\medint\int_{U}\tilde{\chi}^{\pm}(x,\xi,t)\psi_l(x,\xi) = \langle \tilde{\chi},\psi_l\rangle^{\pm}_t\]
are almost surely weakly right- and left-continuous in $L^2(U\times[0,\infty))$ in time, and satisfy $\tilde{\chi}^{\pm}(x,\xi,t)=\tilde{\chi}(x,\xi,t)$ for every $t\in[0,T]\setminus\mathcal{A}$.  For $\tilde{Q}^{\pm}_\psi$ defined analogously to \eqref{exis_148} for every $\psi\in\C^\infty_c(\overline{U}\times(0,\infty))$, the density of the $\{\psi_l\}_{l\in\N}$ in the $H^s$-norm, the Sobolev embedding theorem, and \eqref{6_46} prove $\tilde{\P}$-a.s.\ that there exists a subset of full probability such that, for every $t\in[0,T]$ and $\psi\in\C^\infty_c(\overline{U}\times(0,\infty))$,
\begin{align}\label{6_49}
&\langle \tilde{\chi}^{\pm},\psi\rangle_t = \medint\int_\R \medint\int_{U}\overline{\chi}(\rho_0)\psi-\medint\int_0^t\medint\int_{U}(\nabla_x\psi)(x,\tilde{\rho})\cdot a\nabla\phi(\tilde{\rho})-\tilde{Q}^{\pm}_{\psi}(t)
\\ \nonumber &  -\frac{\langle\xi^F\rangle_1}{4}\medint\int_0^t\medint\int_U\Big(\frac{1}{2}(\nabla_x\psi)(x,\tilde{\rho})\cdot a \nabla \log(\tilde{\rho})+s(\nabla\cdot s^t)\cdot (\nabla_x\psi)(x,\tilde{\rho})\Big)
\\ \nonumber & +\frac{1}{2}\medint\int_0^t\medint\int_U \Big(\frac{\langle\xi^F\rangle_1}{2}(\partial_\eta\psi)(x,\tilde{\rho})\nabla\tilde{\rho}\cdot s(\nabla\cdot s^t)+\langle \nabla\cdot s\xi^F\rangle_1(\partial_\eta \psi)(x,\tilde{\rho})\tilde{\rho})\Big).
\\ \nonumber & + \medint\int_0^t\medint\int_U\sqrt{\tilde{\rho}}(\nabla_x\psi)(x,\tilde{\rho})\cdot s\dd\tilde{\xi}^F+\medint\int_0^t\medint\int_U\sqrt{\tilde{\rho}}(\partial_\eta\psi)(x,\tilde{\rho})\nabla\tilde{\rho}\cdot s\dd\tilde{\xi}^F.
\end{align}
It remains to characterize the limiting kinetic functions $\tilde{\chi}^{\pm}$.  We will first show that $\partial_\xi\tilde{\chi}^\pm\leq 0$ in the sense of distributions.  For this, we observe that for every $t\in[0,T]$, with only a right limit if $t=0$ or a left limit if $t=T$, there $\tilde{\P}$-a.s.\ exist random sequences $\{(t^+_k,t^-_k)\}_{k\in\N}$ of positive and negative numbers that satisfy $t^+_k,t^-_k\rightarrow 0$ as $k\rightarrow\infty$ with $\tilde{\chi}^\pm(x,\xi,t+t^\pm_k)=\tilde{\chi}(x,\xi,t+t^\pm_k)$ for every $k\in\N$.  Therefore, for these subsequences, for any nonnegative functions $\alpha\in\C^\infty_c((0,\infty))$ and $\psi\in\C^\infty(\overline{U})$,
\begin{align*}
\medint\int_\R\medint\int_{U}\tilde{\chi}^{\pm}(x,\xi,t)\psi(x)\alpha'(\xi) & \geq \liminf_{k\rightarrow\infty}\medint\int_\R\medint\int_{U}\tilde{\chi}(x,\xi,t+t^{\pm}_k)\psi(x)\alpha'(\xi)
\\ & \geq \liminf_{k\rightarrow\infty}\medint\int_\R\medint\int_{U}\psi(x)\alpha(\tilde{\rho}(x,t+t^{\pm}_k)) \geq 0.
\end{align*}
Since linear combinations of the $\alpha(\xi)\psi(x)$ are dense in $\C^\infty_c(\overline{U}\times(0,\infty))$, $\tilde{\P}$-a.s.\ as distributions,
\begin{equation}\label{exist_172}\partial_\xi\tilde{\chi}^{\pm}(x,\xi,t)\leq 0\;\;\textrm{on}\;\;U\times(0,\infty)\times[0,T].\end{equation}
We will now use \eqref{6_49} and \eqref{exist_172} to show that $\tilde{\chi}^\pm$ are $\{0,1\}$-valued.  The methods of Theorem~\ref{thm_unique}, which rely on the initial time continuity \eqref{exis_134}, \eqref{exist_172}, and the techniques of Appendix~\ref{sec_Ito} with the form of It\^o's formula for semimartingales found in Kallenberg \cite[Theorem~20.7]{Kallenberg} to handle the potential discontinuities introduced by the measure $\tilde{q}$ justify differentiating the equality
\[\medint\int_\R\medint\int_{U}\tilde{\chi}^{\pm}+\tilde{\chi}^{\pm}-2(\tilde{\chi}^{\pm})^2 = 2\medint\int_\R\medint\int_{U}\tilde{\chi}^{\pm}(1-\tilde{\chi}^{\pm}).\]
A repetition of the proof of Theorem~\ref{thm_unique} in this context proves that, $\tilde{\P}$-a.s.\ for every $t\in[0,T]$,
\[\medint\int_\R\medint\int_{U}\tilde{\chi}^{\pm}(x,\xi,t)(1-\tilde{\chi}^{\pm}(x,\xi,t))\dx\dxi\leq \medint\int_\R\medint\int_{U}\overline{\chi}(\rho_0)(1-\overline{\chi}(\rho_0))\dx\dxi=0,\]
and, since the weak convergence implies that $0\leq \tilde{\chi}^{\pm}\leq 1$ almost everywhere, this proves $\tilde{\P}$-a.s.\ that
\begin{equation}\label{exist_170}\textrm{$\tilde{\chi}^{\pm}$ are $\{0,1\}$-valued on $U\times(0,\infty)\times[0,T]$.}\end{equation}
It is a consequence of \eqref{exist_172} and \eqref{exist_170} that there $\tilde{\P}$-a.s.\ exist $\tilde{\rho}^{\pm}\in L^1(U\times[0,T])$ that satisfy $\tilde{\rho}^{\pm}(x,t)=\tilde{\rho}(x,t)$ for almost every $t\in[0,T]$ and that satisfy
\begin{equation}\label{exist_171}\tilde{\chi}^{\pm}(x,\xi,t)=\mathbf{1}_{\{0<\xi<\tilde{\rho}^\pm(x,t)\}}.\end{equation}
We will now prove that $\tilde{\rho}^+=\tilde{\rho}^-$.  We have from \eqref{exis_148} and \eqref{6_49} that, $\tilde{\P}$-a.s.\ for every $\alpha\in\C^\infty_c((0,\infty))$ and $\psi\in\C^\infty(\overline{U})$,
\begin{equation}\label{5_53}\medint\int_\R\medint\int_{U}(\tilde{\chi}^+(x,\xi,t)-\tilde{\chi}^-(x,\xi,t))\psi(x)\alpha(\xi) = \tilde{Q}^-_{\alpha\psi}(t)-\tilde{Q}^+_{\alpha\psi}(t)=-\medint\int_{\{t\}\times U\times(0,\infty)}\alpha'(\xi)\psi(x)\dd \tilde{q}. \end{equation}
Let $\{\alpha_n\}_{n\in\N}$ be smooth functions that satisfy $0\leq \alpha_n\leq 1$, $\alpha_n(\eta)=1$ if $\nicefrac{1}{n}\leq \eta\leq n$, $\alpha_n(\eta)=0$ if $\eta<\nicefrac{1}{2n}$ or if $\eta>n+1$, and such that $\alpha_n'(\eta)\leq\nicefrac{c}{n}$ if $\nicefrac{1}{2n}<\eta<\nicefrac{1}{n}$ and $\alpha_n'(\eta)\leq c$ if $n<\eta<n+1$ for $c\in(0,\infty)$ independent of $n$.  It follows from \eqref{exist_171} and \eqref{5_53} that $\tilde{\P}$-a.s.\ for every $\psi\in\C^\infty(\overline{U})$ there exists $c\in(0,\infty)$ depending on $\psi$ such that, for every $t\in[0,T]$,
\[\abs{\medint\int_{U}(p^+(x,t)-p^-(x,t))\psi(x)\dx}\leq c\liminf_{n\rightarrow\infty}\left(n\tilde{q}(\{t\}\times U\times[\nicefrac{1}{2n},\nicefrac{1}{n}])+\tilde{q}(\{t\}\times U\times[n,n+1])\right).\]
We can now repeat the proof of Proposition~\ref{prop_measure}, which relies on the $\tilde{\P}$-a.s.\ nonvanishing of $\tilde{\rho}$ shown prior to \eqref{exis_1} and in \eqref{exist_175} and the $\tilde{\P}$-a.s.\ finiteness of the measures $\tilde{q}$, which implies that the measures vanish at infinity in the sense of \eqref{d_s4}, to prove that, $\tilde{\P}$-a.s.\ for every $t\in[0,T]$ and $\psi\in\C^\infty(\overline{U})$,
\[\abs{\medint\int_{U}(p^+(x,t)-p^-(x,t))\psi(x)\dx}=0.\]
Therefore, by duality, we have $\tilde{\P}$-a.s\ that $\tilde{\rho}^+=\tilde{\rho}^-$ in $L^1(U\times[0,T])$.   We therefore have $\tilde{\P}$-a.s.\ that $\tilde{\chi}^+=\tilde{\chi}^-$ in $L^2(U\times[0,\infty)\times[0,T])$, and that $\tilde{\chi}^+=\tilde{\chi}^-$ is almost surely weakly $L^2$-continuous in time.  Finally, we will use the weak continuity of $\tilde{\chi}^+$ to prove the strong continuity of $\tilde{\rho}^+$.  Let $t\in[0,T]$ and let $\{t_k\}_{k\in[0,\infty)}$ be an arbitrary sequence in $[0,T]$ that satisfies $t_k\rightarrow t$ as $k\rightarrow\infty$.  We then have $\tilde{\P}$-a.s.\ that
\begin{align*}
& \limsup_{k\rightarrow\infty}\medint\int_{U}\abs{\tilde{\rho}^+(x,t)-\tilde{\rho}^+(x,t_k)}\dx = \limsup_{k\rightarrow\infty}\medint\int_\R\medint\int_{U}\abs{\tilde{\chi}^+(x,\xi,t)-\tilde{\chi}^+(x,\xi,t_k)}^2\dx\dxi
\\ & = \limsup_{k\rightarrow\infty}\medint\int_\R\medint\int_{U}\left(\tilde{\chi}^+(x,\xi,t)+\tilde{\chi}^+(x,\xi,t_k)-2\tilde{\chi}^+(x,\xi,t)\tilde{\chi}^+(x,\xi,t_k)\right)\dx\dxi=0.
\end{align*}
We conclude that $\tilde{\rho}^+\in\C([0,T;L^1(U))$ and that $\tilde{\rho}$ has a representative in $L^1(\O;L^1(U\times[0,T]))$ that $\tilde{\P}$-a.s.\ takes values in $\C([0,T];L^1(U))$.  It then follows from the continuity and \eqref{6_49} that the measure $\tilde{q}$ has no atoms in time, which completes the proof.

\textbf{Conclusion for $H^1(U)$-valued initial data}.  We have shown that $\tilde{\rho}$ is a stochastic kinetic solution of \eqref{i_eq} in the sense of Definition~\ref{sol_def}, with initial data $\tilde{\rho}_0$, Brownian motion $\tilde{\beta}$, and filtration $(\overline{\mathcal{G}}_t)_{t\in[0,\infty)}$.  Returning to \eqref{6_32}, \eqref{6_34}, and \eqref{6_35}, a repetition of the above arguments proves that there exists $\overline{\rho}\in \C([0,T];L^1(U))$ such that
\[\tilde{Z} = (\overline{\rho},\overline{q},(\overline{M}^j)_{j\in\N},L(\overline{\rho})),\]
and that $\overline{\rho}$ is similarly a stochastic kinetic solution of \eqref{i_eq} in the sense of Definition~\ref{sol_def}, with initial data $\tilde{\rho}_0$, Brownian motion $\tilde{\beta}$, and filtration $(\overline{\mathcal{G}}_t)_{t\in[0,\infty)}$.  Theorem~\ref{thm_unique} proves $\tilde{\P}$-a.s.\ that  $\tilde{\rho}=\overline{\rho}$ in $\C([0,T];L^1(U))$.  Finally, returning to \eqref{6_32}, we conclude that along the subsequence $k\rightarrow\infty$ the joint laws of $(X_{n_k},X_{m_k})$ restricted to $L^1([0,T];L^1(U))^2$ converge weakly to a measure $\mu$ on $L^1([0,T];L^1(U))^2$ satisfying the conditions of Lemma~\ref{lemma_weak}.

Returning to the original solutions $\{\rho_n\}_{n\in\N}$ defined on $(\O,\F,\P)$, Lemma~\ref{lemma_weak} proves that, along a subsequence $n_k\rightarrow\infty$, there exists $\rho\in L^1(\O;L^1(U\times[0,T]))$ such that the $\{\rho_{n_k}\}_{k\in\N}$ converge to $\rho$ in probability.  Passing to a further subsequence, the $\rho_{n_k}\rightarrow\rho$ almost surely.  A simplified version of the above argument then proves that $\rho$ is a stochastic kinetic solution of \eqref{i_eq} in the sense of Definition~\ref{sol_def} on $(\O,\F,\P)$, with respect to the original Brownian motion and filtration.  The estimates follow from the same argument and the weak lower semicontinuity of the Sobolev norm.  This completes the proof of existence for initial data in $L^{2p}(\O;H^1(U))$.

\textbf{Conclusion}.  We first observe that the $H^1$-regularity was only used to obtain the initial time continuity in \eqref{exis_134}.  The remaining estimates are stable with respect to initial data in $L^{2p}(\O;L^2(U))$.  Given a nonnegative, $\F_0$-measurable $\rho_0\in L^{2p}(\O;L^2(U))$ we obtain by localizing the data way from the boundary and convolution a sequence of smooth, nonnegative $\rho_{0,n}\in L^{2p}(\O;H^1(U))$ satisfying $\P$-a.s.\ that $\rho_{0,n}\rightarrow \rho_0$ strongly in $L^2(U)$.  We then have using the $L^1(U)$-contraction property of Theorem~\ref{thm_unique} and a repetition of the above argument that $\P$-a.s.\ the solutions $\rho_n$ constructed above converge strongly in $\C([0,T];L^1(U))$ to a solution $\rho$ of \eqref{i_eq} in the sense of Definition~\ref{sol_def}.  We finally consider a nonnegative $\F_0$-measurable $\rho_0\in L^1(\O;L^1(U))$.  For every $n\in\N$ let $\rho_{0,n}=(\rho\wedge n)$, and let $\rho_n$ be the solution constructed above with initial data $\rho_{0,n}$.  It follows from the methods of Theorem~\ref{thm_unique} that the equation preserves order, and therefore that the sequence $\rho_n$ is $\P$-a.s.\ nondecreasing in $n$.   A repetition of the above arguments, where this time the local regularity property of Proposition~\ref{prop_measure} is used to guarantee properties \eqref{d_s2} and \eqref{d_s4} of Definition~\ref{sol_def}, the fact that the $\rho_n$ are nondecreasing is used to establish the nonvanishing property \eqref{s_log}, and the $L^1(U)$-contraction property of Theorem~\ref{thm_unique} is used to establish the $L^1(U)$-continuity in time, establishes the existence of a strong solution to \eqref{i_eq} with initial data $\rho_0$ in the sense of Definition~\ref{sol_def}.  This completes the proof.\end{proof}

\begin{remark}\label{remark_exist_11}  We remark that, under Assumptions~\ref{assume_d} and \ref{assume_n}, Theorem~\ref{thm_rks_ex} applies without changes to the equation
\[\partial_t\rho = \nabla\cdot a\nabla\phi(\rho)-\nabla\cdot (\phi^{\nicefrac{1}{2}}(\rho)\circ s\dd\xi),\]
and to more general noise terms $(\sigma(\rho)\circ s\dd\xi)$ for coefficients $\sigma$ satisfying \cite[Assumption~5.2]{FG21}.\end{remark}

\noindent{\bf  Acknowledgements}.  The author acknowledges support from the National Science Foundation DMS-Probability
Standard Grant 2348650, the Simons Foundation Travel Grant MPS-TSM-00007753, and the
Louisiana Board of Regents RCS Grant 20130014386.

\appendix

\section{The application of It\^o's formula in \eqref{u_10}}\label{sec_Ito}

In this brief section we provide a simple proof of the It\^o's formula used to justify \eqref{u_10}, which can also be justified using the methods of \cite{Kry2013}.  Let $\{e_k\}_{k\in\N}$ be an orthonormal $L^2(U)$-basis that is an orthogonal $H^1(U)$-basis.  Take, for example, the Neumann eigenfunctions of the Laplacian on $U$.  Then, for every $\eta\in (2\d,\infty)$ let $\lambda_{k,i}$ be defined by
\[\lambda_{k,i}(s,\eta) = \medint\int_U \chi^\d_{s,i}(\eta)e_k.\]
It is a consequence of \eqref{sol_def} that the $\lambda_{k,i}$ are solutions to the stochastic differential equation
\begin{align*}
\dd \lambda_{k,i} & = \Big(-\medint\int_U \nabla e_k\cdot a\nabla\phi(\rho_i)\overline{\kappa}^\d_{s,i}(\eta)+\medint\int_{U}\partial_\eta\big(\kappa^\d*q_i\big)(\eta)e_k\Big)\dt
\\  \nonumber & \quad \Big(-\frac{\langle\xi^F\rangle_1}{8}\medint\int_{U} \nabla e_k \cdot a\nabla\log(\rho_i)\overline{\kappa}^\d_{s,i}(\eta)-\frac{\langle\xi^F\rangle_1}{4}\medint\int_{U}s(\nabla\cdot s^t)\cdot \nabla e_k\Big)\dt
\\ \nonumber & \quad \Big(-\frac{1}{2}\medint\int_{U}e_k\big(\partial_\eta\overline{\kappa}^\d_{s,i}\big)(\eta)\rho_i\langle\nabla\cdot s\xi^F\rangle_1-\frac{\langle\xi^F\rangle_1}{4}\medint\int_0^t\medint\int_{U}e_k\big(\partial_\eta\overline{\kappa}^\d_{s,i}\big)(\eta)s(\nabla\cdot s^t)\cdot\nabla\rho_i\Big)\dt
\\ \nonumber & \quad +\medint\int_{U}\nabla e_k\cdot \overline{\kappa}^\d_{s,i}(\eta)\sqrt{\rho_i} s\dd\xi^F-\medint\int_U\big(\partial_\eta\overline{\kappa}^\d_{s,i}\big)(\eta)e_k\sqrt{\rho_i}\nabla\rho_i\cdot s\dd\xi^F,
\end{align*}
where, owing to the choice of orthonormal basis $e_k$, the final martingale term can be rewritten in the form
\begin{align*}
& \medint\int_{U}\nabla e_k\cdot \overline{\kappa}^\d_{s,i}(\eta)\sqrt{\rho_i} s\dd\xi^F-\medint\int_U\big(\partial_\eta\overline{\kappa}^\d_{s,i}\big)(\eta)e_k\sqrt{\rho_i}\nabla\rho_i\cdot s\dd\xi^F
\\ & = -\medint\int_U e_k\nabla\cdot (\overline{\kappa}^\d_{s,i}\sqrt{\rho_i} s\dd\xi^F)+\medint\int_Ue_k\sqrt{\rho_i}\nabla \overline{\kappa}^\d_{s,i}\cdot s\dd\xi^F= -\medint\int_U e_k\overline{\kappa}^\d_{s,i}\nabla\cdot (\sqrt{\rho_i} s\dd\xi^F).
\end{align*}
It is a consequence of the Kolmogorov continuity criterion (see, for example, \cite[Chapter~1, Theorem~2.1]{RevYor1999}) that for every $\d\in(0,1)$ there exists a subset of full probability such that the $\lambda_{k,i}$ satisfy the above equation for every $\eta\in(\d,\infty)$.  It then follows that, $\P$-a.s.\ for every $t\in[0,T]$,
\[\medint\int_U \chi^\d_{s,1}(\eta)\chi^\d_{s,2}(\eta) = \textstyle\sum_{k=1}^\infty \lambda_{k,1}(\eta)\lambda_{k,2}(\eta).\]
Equation \eqref{u_10} then follows by integrating the above equality over $\R$ with respect to the cutoff functions $\varphi_\beta\zeta_M$, and then applying the standard one-dimensional It\^o formula to the products of the eigenvalues.

\bibliography{WhiteNoise}
\bibliographystyle{plain}

\end{document}